\documentclass[12pt]{amsart}
\usepackage{pdfsync}
\usepackage[latin1]{inputenc}
\usepackage{amsmath,amssymb}
\usepackage{colortbl}
\usepackage[english]{babel}
\usepackage [T1] {fontenc}
\addtolength{\textwidth}{4cm} 
\addtolength{\hoffset}{-2cm} 
\addtolength{\voffset}{-2cm} 
\addtolength{\textheight}{3cm} 
\setlength{\parindent}{0pt} 
\vfuzz2pt
\hfuzz2pt
\newtheorem{thm}{Theorem}[section]
\newtheorem{cor}[thm]{Corollary}
\newtheorem{lem}[thm]{Lemma}
\newtheorem{prop}[thm]{Proposition}
\theoremstyle{definition}
\newtheorem{defn}[thm]{Definition}
\newtheorem{rem}[thm]{Remark}

 \numberwithin{equation}{section}
\newcommand\A{\mathcal{A}}

\newcommand\E{\mathcal{E}}
\newcommand\F{\mathcal{F}}

\newcommand\R{{\mathbb R}}

\newcommand\calH{\mathcal{H}}
\newcommand\K{\mathcal{K}}
\newcommand\I{\mathcal{I}}

\newcommand\B{\mathcal{B}}
\newcommand\M{\mathcal{M}}

\newcommand{\e}{\varepsilon}

\def\undemi{\frac{1}{2}}
\def\ds{\displaystyle}

\begin{document}

\title{Quantum differentials of Spectral triples, \\ Dirichlet spaces and discrete groups}
\author{Fabio E.G. Cipriani, Jean-Luc Sauvageot}
\address{(F.E.G.C.) Politecnico di Milano, Dipartimento di Matematica,
piazza Leonardo da Vinci 32, 20133 Milano, Italy.} \email{fabio.cipriani@polimi.it}
\address{(JLS) Institut de Math\'ematiques de Jussieu -- Paris Rive Gauche, CNRS -- Universit\'e Paris Cit\'e, F-75205 Paris Cedex 13, France}
\email{jean-luc.sauvageot@imj-prg.fr}
\footnote{This work has been supported by Laboratoire Ypatia des Sciences Math\'ematiques C.N.R.S. France - Laboratorio Ypatia delle Scienze Matematiche I.N.D.A.M. Italy (LYSM).}
\subjclass{58B34, 31C25, 46L57, 47A11}
\date{December 30, 2022}
\begin{abstract}
We study natural conditions on essentially discrete spectral triples $(\A,h,D)$ by which the quantum differential $da$ of $a\in\A$ belongs to the ideal generated by the unit length $ds=D^{-1}$. We also study upper and lower bounds on the singular values of the $da$'s and apply the general framework to natural spectral triples of Dirichlet spaces and, in particular, those on dual of discrete groups arising from negative definite functions.
\end{abstract}
\maketitle

\section{Introduction}

In Noncommutative Geometry \cite{Co} a spectral triple $(\A,h,D)$ is made by a $*$-algebra of bounded operators $\A\subseteq B(h)$ on a Hilbert space $h$ on which a densely defined self-adjoint (Dirac) operator $D$ also acts in such a way that the commutators $[D,a]\in B(h)$ are bounded for all $a\in\A$. The {\it unit length} or {\it line element} of $(\A, h, D)$ (\cite{Co} Chapter 7), given by the compact operator
\[
{\bf ds}=D^{-1},
\]
is meant to encode the NC metric aspects of the structure. The volume element, given by the spectral density operator ${\bf dv}=\rho(D)$ \cite{CS3}, where $\rho(x):=N_{|D|}(x)^{-1}$ is the reciprocal of the counting function of $|D|$, represents the NC measure theoretic aspects. It can be written as ${\bf dv}=\sigma ({\bf ds})$ where $\sigma(x):=N_{|D|}(x^{-1})^{-1}$ is the spectral density function of the compact operator $\bf ds$. Other fundamental infinitesimals (i.e. compact operators) are the quantum differentials $da:=i[F,a]$ of elements $a\in\A$, where $(\A,h,F)$ is a Fredholm module associated to the spectral triple. In the classical case of the Riemannian geometry of a compact smooth manifold, where $\A$ is the algebra of smooth functions,
$D$ is the Dirac operator and $F$ is the Hilbert transform, connections among the $da$'s, $\bf ds$ and their singular values, can be established thanks to the tools of the pseudo-differential calculus. Even in this commutative setting, however, a much finer analysis is needed when one only requires $\A$ to be an algebra of Lipschitz or Sobolev functions, as in the case of the one dimensional fractals and quasi-conformal manifolds (cf. works of A. Connes-D. Sullivan in \cite{Co} and M. Hilsum in \cite{Hil}). As a rule, the higher smoothness of an element $a\in\A$ has to be read by the stronger compactness of its quantum differential $da$ which, in ultimate analysis, will lie in the ideal generated by the unit length $\bf ds$.\\
In this work we estimates the singular values of the $da's$ in terms of those of $\bf ds$ without imposing any spectral growth condition on the Dirac operator $D$ and only requiring minimal and natural smoothness: $\A$ is a subalgebra either in the domain $\A_D$ of the generator of the automorphisms group $\alpha(a,t):=e^{itD}ae^{-itD}$, or it is lying in the intersection $\A_D\cap\A_{|D|}$ with the domain $\A_{|D|}$ of the generator of the automorphisms group $\beta(a,t):=e^{it|D|}ae^{-it|D|}$ (in other words, the commutators with $D$ or $|D|$ are bounded). In Quantum Mechanics $\alpha$ represents the Heisenberg time evolution of observables in a system governed by the Dirac Hamiltonian $D$ while, in Riemannian Geometry, $\beta$ represents the geodesic flow, by the Egorov Theorem.\\
In order to deal with more examples, among which the forthcoming ones, we allow the Dirac operator to have an infinite dimensional kernel, while it is assumed to have discrete spectrum away from its kernel, i.e. to have compact inverse $D^{-1}$ on the orthogonal $ker(D)^\perp$ of its kernel.

To the range of application of NCG, we add, with this work, Dirichlet spaces. These are C$^*$ or von Neumann algebras, commutative or not, with a privileged quadratic form $\E$ satisfying a characteristic contraction property and named Dirichlet form, representing an energy functional. This functional generalizes the Dirichlet integral of a Riemannian manifold and allows to develop a kernel-free NC Potential Theory. In a Dirichlet space, the Dirac operator is a $2\times 2$ matrix
\[
D=\begin{pmatrix} 0 & \partial^* \\ \partial & 0 \end{pmatrix}
\]
defined by the derivation $\partial$, which represents the {\it differential square root} of the Dirichlet energy form (cf. \cite{CS1}) in the sense that
\[
\E[a]=\|\partial a\|^2_\mathcal{H}.
\]

Natural examples come from ground state representations of Hamiltonian in Quantum Mechanics, completely positive, Markovian semigroups converging to KMS equilibria in Quantum Statistical Mechanics and generators of quantum Levy processes in Quantum Probability. In the final part of the work we study, in particular, quantum differentials of Dirichlet spaces associated to negative definite functions on countable, discrete groups. 

A somewhat detailed account of the content of the work is as follows. In Section 2, we introduce {\it essentially discrete spectral triples} $(\A,h,D)$, a slight enlargement of the classical notion which  will be needed for applications to Dirichlet spaces. In the section, we consider a canonical Fredholm operator $F_0$ and a technical variant $F$ of it, associated to any of these triples, showing that they furnish equivalent Fredholm modules on $\A$. The analysis is based on old and new representations of the quantum differentials $da$ in terms of the gradient $i[D,a]$ and the line element $\bf ds$. In particular, it is shown that the $da$'s belong to the ideal generated by $\sqrt{|{\bf ds}|}$ and that the singular values $\mu_{4k}(da)$ are controlled by the singular values $\mu_k({\bf ds})$ with ratios proportional to the Lipschitz seminorms $\|[D,a]\|$. Assuming that also the commutators $[|D|,a]$ are bounded, we prove that the $da$'s belong to the ideal generated by $\bf ds$ and that the $\mu_{2k}(da)$'s are dominated by the $\mu_k({\bf ds})$'s. Their ratios are shown to be asymptotically close to five times the sum of the seminorms $\|[D,a]\|,\|[|D|,a]\|$. In case of discrete spectral triples the estimate is improved to control $\mu_{k+d}(da)$ where $d:={\rm dim\, ker}(D)$. \\
In Section 3, we introduce the canonical, essentially discrete spectral triple and Fredholm module of a Dirichlet space $(\E,\F)$ on a C$^*$-algebra with l.s.c. faithful trace $(A,\tau)$. The Dirac operator is here the anti-diagonal matrix of the derivation $\partial$ and divergence $\partial^*$ canonically associated to $\E$, and $\A$ is the {\it Lipschitz algebra} $\A_\E$ made by elements of the Dirichlet algebra $\B:=\F\cap A$ which have bounded {\it carr\'e du champ} (alias energy density). Elements of the {\it smooth sub-algebra} $\A_\E^{2,\infty}\subseteq \A_\E$, which belong to the domain of the generator $L$ of $\E$ on the von Neumann algebra $M:=L^\infty(A,\tau)$, are shown to have bounded commutators with $|D|$. These results are consequences of the compactness on $L^2(A,\tau)$ of the commutators with the roots $(I+L)^\gamma $  when $\gamma\in (0,1/2)$ and their boundedness when $\gamma=1/2$.\\
In Section 4 we prove lower bounds on the singular values of a quantum differential $da$ when the commutator $[\sqrt{I+L},a]$ is compact. In this case, the singular values of the $da$'s are controlled {\it both above and below} by those of the line element $\bf{ds}$.
The case of Dirichlet forms whose generator are roots $L^\beta$ by $\beta\in (0,1)$ of $L$ is especially considered.\\
In the final Section 5, we analyze Dirichlet forms constructed by negative type functions on discrete groups. We introduce the class of {\it slow negative type functions} to which the entire above analysis apply, showing also that it contains the length function of free groups and any root of any proper negative type function on any discrete group.

\section{Essentially discrete spectral triples and their Fredholm modules }

To cover applications to Dirichlet spaces, we consider spectral triples of the following type:
\begin{defn}
$(\A,h,D)$ is said {\it essentially discrete} if $\sigma(D)\setminus\{0\}$ is discrete.
\end{defn}
Thus, if $0\in\sigma(D)$, this value is allowed to be an eigenvalue with infinite degeneracy i.e., denoting by $P_0$ the orthogonal projection onto ${\rm ker\,}(D)$, we are assuming that $(I-P_0)D$ has discrete spectrum as an operator on $(I-P_0)(h)$.
\vskip0.1truecm\noindent
We shall adopt the convention by which $D^{-1}$ and $|D|^{-1}$ denote the operators which identically vanish on ${\rm ker\,}(D)=P_0h$ and are the usual functional calculi on $(I-P_0)(h)$. With this convention, both these operators are compact and we have the identities
\[
DD^{-1}=D^{-1}D =|D|\,|D|^{-1}=|D|^{-1}|D|=I-P_0,\qquad D|D|^{-1}=sign(D)
\]
where the last one is the sign operator corresponding to the sign function on $\R$.
\smallskip
The nonzero part of the spectrum of $|D|$ will be enumerated as
\[
0<\lambda_1(|D|) \leq \lambda_2(|D|) \cdots \leq \lambda_n(|D|) \leq \lambda_{n+1}(|D|)\leq \cdots
\]
where each eigenvalue $\lambda_n(|D|)$ is repeated according to its multiplicity.

\subsection{Fredholm operators and quantum differentials}\label{Fred}

In the commutative situation where $D$ is the Dirac operator on the Clifford algebra of a closed, compact, Riemannian manifold $M$, the sign operator $F_{cl}:=sign(D)$ is a $0$-order $\Psi DO$, since $D$ is differential operator and $|D|$ is a $\Psi DO$ both of order $1$. By the pseudo-differential calculus, the commutators $[F_{cl},a]$ with smooth functions $a\in C^\infty(M)$ are $\Psi DO$ of order $-1$ so that $[F_{cl},a]|D|$ and $|D|[F_{cl},a]$ are bounded as $0$-order $\Psi DO$. In other words, the commutators $[F_{cl},a]$ with smooth functions belong to the symmetric principal ideal (of compact operators) generated by the line element ${\bf ds}=|D|^{-1}$. This property cannot be derived if the functions $a$ are just Lipschitz because in these cases pseudo-differential calculus does not apply. The purpose of this section is to get similar estimates in the general noncommutative geometrical workframe under rather mild assumptions on $a\in\B(h)$ of Lipschitz nature.
\vskip0.2truecm\noindent
To consider the general situation, we associate to the Dirac operator, the following self-adjoint, bounded operator
\[
F:=P_0+\frac{D}{\sqrt{1+D^2}}
\]
which is in fact a Fredholm one, as it follows from
\[
F^2-I=P_0+(I-P_0)\frac{D^2}{I+D^2}-I_h=-(I-P_0) \frac{I}{I+D^2}.
\]
An alternative self-adjoint, Fredholm operator will be also considered:
\[
F_0:=P_0+D\,|D|^{-1}.
\]
It is an orthogonal symmetry which is equal to $F$ up to a compact operator. More precisely
\begin{equation}\label{F-F0}
F^2=I,\qquad F-F_0=(I-P_0)T\,|D|^{-2}
\end{equation}
where $\displaystyle T:=-\frac{D}{|D|}\frac{D^2}{\sqrt{I+D^2}(|D|+\sqrt{I+D^2})}$ is a contraction.
\vskip0.2truecm\noindent
The operators $i[F_0,a]$, $i[F,a]$, both denoted by $da$, are the {\it quantum differentials} of $a\in\A$ (\cite{Co} Chapter 4).
Let us start with an observation:

\begin{lem}\label{P0}
The following identity holds true
\[
[P_0,a]=P_0\, \alpha_0\,|D|^{-1}+|D|^{-1}\beta_0\,P_0\qquad a\in\A
\]
for suitable operators $\alpha_0,\beta_0\in B(h)$ with $\|\alpha_0\|= \|\beta_0\|=\|[D,a]\|$.
\end{lem}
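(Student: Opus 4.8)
The plan is to compute $[P_0,a]$ in the block decomposition $h=P_0h\oplus(I-P_0)h$. As for any orthogonal projection, $[P_0,a]$ has vanishing diagonal blocks there, so
\[
[P_0,a]=P_0\,a\,(I-P_0)-(I-P_0)\,a\,P_0 ,
\]
and it suffices to express each off-diagonal corner through the gradient $[D,a]$ and the line element. Here I would use that $a$ preserves $\mathrm{dom}(D)$, together with $P_0D=DP_0=0$ and $DD^{-1}=D^{-1}D=I-P_0$: on $\mathrm{dom}(D)$ one has $P_0aD=P_0(Da-[D,a])=-P_0[D,a]$ and $DaP_0=([D,a]+aD)P_0=[D,a]P_0$, whence, inserting $I-P_0=DD^{-1}$ on the right of the first relation and $I-P_0=D^{-1}D$ on the left of the second,
\[
P_0\,a\,(I-P_0)=-P_0[D,a]\,D^{-1},\qquad (I-P_0)\,a\,P_0=D^{-1}[D,a]\,P_0 .
\]

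To turn $D^{-1}$ into $|D|^{-1}$, I would write $D^{-1}=(D|D|^{-1})|D|^{-1}=|D|^{-1}(D|D|^{-1})$ by functional calculus and recall that $F_0=P_0+D|D|^{-1}$ is the orthogonal symmetry of \eqref{F-F0}, so that $D|D|^{-1}=F_0-P_0$. Since the adopted convention makes $|D|^{-1}$ annihilate $P_0h$, i.e. $P_0|D|^{-1}=|D|^{-1}P_0=0$, substituting $D|D|^{-1}=F_0-P_0$ and discarding the $P_0$-term yields
\[
P_0\bigl(-[D,a]F_0\bigr)|D|^{-1}=-P_0[D,a]D^{-1}=P_0\,a\,(I-P_0),
\]
\[
|D|^{-1}\bigl(-F_0[D,a]\bigr)P_0=-D^{-1}[D,a]P_0=-(I-P_0)\,a\,P_0 .
\]
Adding these two lines recovers $[P_0,a]$, so the identity of the lemma holds with $\alpha_0:=-[D,a]F_0$ and $\beta_0:=-F_0[D,a]$.

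The norm equalities then drop out: $F_0=F_0^*$ and $F_0^2=I$, so $F_0$ is unitary and both $X\mapsto XF_0$ and $X\mapsto F_0X$ are isometries of $B(h)$, giving $\|\alpha_0\|=\|\beta_0\|=\|[D,a]\|$. I expect the only genuinely delicate point to be precisely this last one, i.e. the choice of the Fredholm factor. The naive candidate $D|D|^{-1}=sign(D)$ is merely a partial isometry, so it would still produce a valid identity but only the inequality $\|\alpha_0\|\le\|[D,a]\|$, which can be strict when $\ker D\neq\{0\}$; adding back the summand $P_0$ — harmless because $|D|^{-1}$ kills $P_0h$ — upgrades $sign(D)$ to the unitary $F_0$ and restores exact equality. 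Everything else is a routine domain check based on $P_0D=DP_0=0$ and $a\,\mathrm{dom}(D)\subseteq\mathrm{dom}(D)$.
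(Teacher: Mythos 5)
Your argument is correct and is essentially the paper's own proof: both split $[P_0,a]$ into its off-diagonal corners, insert $DD^{-1}=I-P_0$ (resp.\ $D^{-1}D$), use $P_0D=DP_0=0$ to replace $P_0aD$ and $DaP_0$ by commutators, and then factor $D^{-1}=\mathrm{sign}(D)\,|D|^{-1}$. The only (harmless) refinement is your use of the symmetry $F_0=P_0+D|D|^{-1}$ in place of the partial isometry $|D|D^{-1}$, which indeed justifies the exact equalities $\|\alpha_0\|=\|\beta_0\|=\|[D,a]\|$ claimed in the statement, whereas the paper's explicit factors give a priori only $\leq$ (which is all that is used later).
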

\begin{proof}
The stated representation follows from the identities
\[
\begin{split}
P_0a-P_0aP_0&=P_0a(I-P_0)=P_0aDD^{-1}=P_0[a,D]D^{-1}=P_0[a,D]|D|D^{-1}|D|^{-1}\\
P_0aP_0-aP_0&=(I-P_0)aP_0=D^{-1}DaP_0=D^{-1}[D,a]P_0=|D|^{-1}|D|D^{-1}[D,a]P_0.
\end{split}
\]
\end{proof}

\begin{prop}\label{cruc1} For $a=a^*\in\A$ we have the bounds
\begin{equation}\label{cruc22}
-||\,[D,a]\,||\, (I+D^2)^{-1/2} \leq i\,\big[ \frac{D}{\sqrt{1+D^2}},\,a\big] \leq ||\,[D,a]\,||\, (I+D^2)^{-1/2}
\end{equation}
\begin{equation}\label{cruc23}
-||\,[D,a]\,||\, |D|^{-1} \leq i\,(I-P_0)\,\big[ \frac{D}{\sqrt{1+D^2}},\,a\big]\,(I-P_0) \leq ||\,[D,a]\,||\, |D|^{-1}\\
\end{equation}
and
\begin{equation}\label{cruc24}
-\|[D,a]\|\cdot |D|^{-1}\le i\,(I-P_0)\,\big[\,\frac{D}{|D|},\,a\,\big]\,(I-P_0) \le\|[D,a]\|\cdot |D|^{-1}.
\end{equation}
\end{prop}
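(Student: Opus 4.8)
The plan is to pass from $\frac{D}{\sqrt{1+D^2}}$ (and, on $\ker(D)^\perp$, from $\frac{D}{|D|}$) to the resolvents of $D^2$ by the elementary subordination formula
\[
\frac{D}{\sqrt{D^2+\gamma}}=\frac{2}{\pi}\int_0^{+\infty}D\,(D^2+\gamma+t^2)^{-1}\,dt\,,\qquad\gamma\in\{0,1\}\,,
\]
and to estimate, for each fixed $t$, the commutator of the resolvent factor. Abbreviating $R_t:=(D^2+\gamma+t^2)^{-1}\ge 0$ and $b:=i[D,a]$ — which is \emph{self-adjoint} with $\|b\|=\|[D,a]\|$, precisely because $a=a^*$ — a short computation, expanding $[DR_t,a]=[D,a]R_t+D[R_t,a]$ with $[R_t,a]=-R_t\bigl(D[D,a]+[D,a]D\bigr)R_t$ and using the identity $DR_tD=D^2R_t=I-(\gamma+t^2)R_t$, collapses all non-commutative error terms into the clean formula
\[
i\,[\,DR_t,\,a\,]=(\gamma+t^2)\,R_t\,b\,R_t-DR_t\,b\,DR_t\,.
\]
This identity is the algebraic core of the proof.

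Because $R_t$ and $DR_t$ are bounded self-adjoint operators and $-\|b\|I\le b\le\|b\|I$, one gets $R_t\,b\,R_t\le\|b\|R_t^2$ and $DR_t\,b\,DR_t\le\|b\|D^2R_t^2$ together with the reverse inequalities, whence, using $(\gamma+t^2)R_t^2+D^2R_t^2=(D^2+\gamma+t^2)R_t^2=R_t$,
\[
-\,\|[D,a]\|\,R_t\;\le\;i\,[\,DR_t,\,a\,]\;\le\;\|[D,a]\|\,R_t\,.
\]
Integrating this operator inequality against $\tfrac{2}{\pi}\,dt$ over $(0,+\infty)$, and recalling $\tfrac{2}{\pi}\int_0^{+\infty}R_t\,dt=(D^2+\gamma)^{-1/2}$ (valid on $\ker(D)^\perp$ when $\gamma=0$) and $\tfrac{2}{\pi}\int_0^{+\infty}DR_t\,dt=D(D^2+\gamma)^{-1/2}$, delivers all three assertions. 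Taking $\gamma=1$ gives \eqref{cruc22}; compressing \eqref{cruc22} by $I-P_0$ and using $(I+D^2)^{-1/2}\le|D|^{-1}$ on $\ker(D)^\perp$ (immediate from $D^2\le I+D^2$ there, since inversion and the square root are monotone in the appropriate sense) gives \eqref{cruc23}; and taking $\gamma=0$, compressing the sandwich by $I-P_0$ \emph{before} integrating — so that $\tfrac{2}{\pi}\int_0^{+\infty}(I-P_0)R_t(I-P_0)\,dt=|D|^{-1}$ with our convention while $\tfrac{2}{\pi}\int_0^{+\infty}DR_t\,dt=D|D|^{-1}$ on $\ker(D)^\perp$ — gives \eqref{cruc24}.

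The only real (and mild) obstacle is justifying the interchange of the $t$-integral with the commutator and with the kernel projection. The integral $\int_0^{+\infty}DR_t\,dt$ converges merely conditionally, in the strong topology, whereas $\int_0^{+\infty}i[DR_t,a]\,dt$ converges absolutely in operator norm, being dominated by $\|[D,a]\|\int_0^{+\infty}R_t\,dt<\infty$; compatibility is checked on vectors, since for $\xi\in h$ one has $\int_0^{N}[DR_t,a]\xi\,dt=\bigl(\int_0^{N}DR_t\,dt\bigr)a\xi-a\bigl(\int_0^{N}DR_t\,dt\bigr)\xi\to[\,D(D^2+\gamma)^{-1/2},a\,]\xi$, so the norm-limit agrees with the commutator of the target operator. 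For $\gamma=0$ one must keep the $I-P_0$ compressions throughout (the uncompressed $\int_0^{+\infty}R_t\,dt$ diverges on $P_0h$) and pass the two-sided bound to the limit by monotone convergence applied to the positive operators $\|[D,a]\|\,R_t\mp i[DR_t,a]$. Lemma~\ref{P0}, which isolates $[P_0,a]$, is not needed for this statement; it enters only when the quantum differentials $da=i[F,a]$ themselves are treated. Modulo this routine analysis the proposition follows.
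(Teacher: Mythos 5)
Your proof is correct and takes essentially the same route as the paper: the same subordination formula (yours is the paper's $t^{-1/2}(t+1+D^2)^{-1}$ representation after the substitution $t\mapsto t^2$), the same algebraic reduction of the integrand to $(\gamma+t^2)R_t\,b\,R_t-DR_t\,b\,DR_t$ with $b=i[D,a]$, the same $\pm\|[D,a]\|$ sandwich bound telescoping to $R_t$, and \eqref{cruc23} deduced by compressing \eqref{cruc22} with $I-P_0$. For \eqref{cruc24} you redo the integral with $\gamma=0$ keeping the $(I-P_0)$ compressions, which is precisely the ``same way, with obvious adaptations'' option the paper itself indicates next to its citation of \cite{SWW}.
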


\begin{proof}
Notice that the double inequality (\ref{cruc23}) is a straightforward consequence of (\ref{cruc22}), since $\displaystyle (I-P_0)\frac{1}{\sqrt{1+D^2}}(I-P_0)\leq |D|^{-1}$.
In order to prove the double inequality in (\ref{cruc22}), we use, as usual (see \cite{SWW} Proposition 1), the identity (in the strongly convergent sense)
$$\frac{D}{\sqrt{I+D^2}}=\frac{1}{\pi}\int_0^{+\infty}t^{-1/2} \frac{D}{tI+I+D^2}dt$$
from which we deduce
\begin{equation*}\begin{split}
 i\big[ \frac{D}{\sqrt{1+D^2}},\,a\big]&=\frac{1}{\pi}\int_0^{+\infty}t^{-1/2}\,i\,\Big[\frac{D}{tI+I+D^2},\,a\,\Big]dt\\
 &=\frac{1}{\pi}\int_0^{+\infty}t^{-1/2}\left(\frac{I}{tI+I+D^2}i[D,a]+I\big[\frac{I}{tI+I+D^2},\,a\,\big]\,D\,\right)dt\\
 &=\frac{1}{\pi}\int_0^{+\infty}t^{-1/2}\left(\frac{I}{tI+I+D^2}i[D,a]-\frac{I}{tI+I+D^2}i[D^2,a]\frac{D}{tI+I+D^2}\right)dt \\
 &=\frac{1}{\pi}\int_0^{+\infty}t^{-1/2}\Big(\frac{I}{tI+I+D^2}i[D,a]-\frac{D}{tI+I+D^2}i[D,a]\frac{D}{tI+I+D^2} \\
&\hskip6cm -\frac{I}{tI+I+D^2}i[D,a]\frac{D^2}{tI+I+D^2}\Big)dt \\
&=\frac{1}{\pi}\int_0^{+\infty}t^{-1/2}\Big(\frac{I}{tI+I+D^2}i[D,a]\big(I-\frac{D^2}{tI+I+D^2}\big)\\
&\hskip6cm-\frac{D}{tI+I+D^2}i[D,a]\frac{D}{tI+I+D^2}\Big)dt\\
&=\frac{1}{\pi}\int_0^{+\infty}t^{-1/2}\Big(\frac{tI+I}{tI+I+D^2}i[D,a]\frac{I}{tI+I+D^2}\\
&\hskip6cm-\frac{D}{tI+I+D^2}i[D,a]\frac{D}{tI+I+D^2}\Big)dt\,.
\end{split}\end{equation*}
Noticing now that $i[D,a]$ is self-adjoint, so that $-||\,[D,a]\,||\,I \leq i[D,a]\leq ||\,[D,a]\,||\,I$, the above identity thus provides
\begin{equation*}\begin{split}
 i\big[ \frac{D}{\sqrt{1+D^2}},\,a\big]&\leq ||\,[D,a]\,||\,\frac{1}{\pi}\int_0^{+\infty}t^{-1/2}\Big(\frac{tI+I}{(tI+I+D^2)^2} +\frac{D^2}{(tI+I+D^2)^2}\Big)dt\\
 &= ||\,[D,a]\,||\,\frac{1}{\pi}\int_0^{+\infty}t^{-1/2}\frac{I}{tI+I+D^2}\,dt\\
 &=||\,[D,a]\,||\,(I+D^2)^{-1/2}
 \end{split}\end{equation*}
and, similarly,
\begin{equation*}\begin{split}
 i\big[ \frac{D}{\sqrt{1+D^2}},\,a\big]&\geq - ||\,[D,a]\,||\,\frac{1}{\pi}\int_0^{+\infty}t^{-1/2}\Big(\frac{tI+I}{(tI+I+D^2)^2} +\frac{D^2}{(tI+I+D^2)^2}\Big)dt\\
 &= -||\,[D,a]\,||\,\frac{1}{\pi}\int_0^{+\infty}t^{-1/2}\frac{I}{tI+I+D^2}\,dt\\
 &=-||\,[D,a]\,||\,(I+D^2)^{-1/2}\,.
\end{split}
\end{equation*}
The double inequality (\ref{cruc24}) can be proved the same way, with obvious adaptations, or derived from Proposition 1 in \cite{SWW} where the same inequality is stated in case $D$ is invertible. In fact, on one hand
\[
 i\,(I-P_0)\,\big[\,\frac{D}{|D|},\,a\,\big]\,(I-P_0) = i\big[\,(I-P_0)\,\frac{D}{|D|}\,(I-P_0)\,,\,(I-P_0) a(I-P_0) \,\big]= i\big[\,\frac{D}{|D|},\,(I-P_0) a(I-P_0) \,\big]
 \]
and, since  $\|\big[\,D,\,(I-P_0) a(I-P_0) \,\big]\|=\|[D,a]\|$, $(D,(I-P_0)h,(I-P_0)\A(I-P_0))$ is a spectral triple where $D$ is invertible on $(I-P_0)h$.
\end{proof}
Using the bounds above, we show that  the quantum differentials of elements of $\A$ belong to the symmetric principal ideal in $B(h)$ generated by $\sqrt{|{\bf ds}|}=|D|^{-1/2}$.

\begin{prop}\label{Fa1}
For any fixed $a=a^*\in \A$
\vskip0.1truecm\noindent
i) there exist bounded operators $\alpha, \beta, \gamma\in B(h)$ such that
\[
i[F,a]=\alpha |D|^{-1}+|D|^{-1}\beta + |D|^{-1/2} \gamma\, |D|^{-1/2},
\]
with $||\alpha||\leq 2\,||\,[D,a]\,||$, $||\beta||\leq 2\,||\,[D,a]\,||$, $||\gamma||\leq ||\,[D,a]\,||$;
\vskip0.1truecm\noindent
ii) a similar representation holds true with $F_0$ in place of $F$.
\end{prop}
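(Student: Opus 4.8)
The plan is to write $i[F,a]=i[P_0,a]+i\big[\tfrac{D}{\sqrt{1+D^2}},a\big]$ and to treat the two summands separately, decomposing every operator through the orthogonal splitting $h=P_0h\oplus(I-P_0)h$ and reducing each block to one of the three prescribed shapes: the $P_0$-part will be handled by Lemma~\ref{P0}, the $(I-P_0)$-part by the operator bounds of Proposition~\ref{cruc1}.

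First, Lemma~\ref{P0} already writes $i[P_0,a]=\big(iP_0\alpha_0\big)|D|^{-1}+|D|^{-1}\big(i\beta_0P_0\big)$, i.e. one term of the first type plus one of the second, with coefficient norms $\|[D,a]\|$. Next set $X:=i\big[\tfrac{D}{\sqrt{1+D^2}},a\big]$. Since the bounded function $x\mapsto x(1+x^2)^{-1/2}$ vanishes at $0$, the operator $\tfrac{D}{\sqrt{1+D^2}}$ annihilates $\ker D$ on both sides, so $P_0XP_0=0$ and $X=(I-P_0)X(I-P_0)+(I-P_0)XP_0+P_0X(I-P_0)$. I would then compute the two off-diagonal blocks explicitly. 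Using $P_0D=0$ (hence $P_0aD=P_0[a,D]$ on $\mathrm{dom}(D)$) together with the identity $(I-P_0)(1+D^2)^{-1/2}=\tfrac{|D|}{\sqrt{1+D^2}}\,|D|^{-1}$, in which $\tfrac{|D|}{\sqrt{1+D^2}}$ is a contraction, one gets that
\[
P_0X(I-P_0)=-\,iP_0[a,D]\,\tfrac{|D|}{\sqrt{1+D^2}}\,|D|^{-1}
\]
is of the first type with coefficient norm $\le\|[D,a]\|$; symmetrically, using $(I-P_0)aP_0=D^{-1}[D,a]P_0$ and the same identity,
\[
(I-P_0)XP_0=|D|^{-1}\Big(i\,\tfrac{|D|}{\sqrt{1+D^2}}\,[D,a]P_0\Big)
\]
is of the second type with coefficient norm $\le\|[D,a]\|$. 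Collecting these with the two contributions of $i[P_0,a]$ produces $\alpha,\beta$ with $\|\alpha\|,\|\beta\|\le 2\|[D,a]\|$.

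There remains the diagonal block $Y:=(I-P_0)X(I-P_0)$, which by (\ref{cruc23}) satisfies $-\|[D,a]\|\,|D|^{-1}\le Y\le\|[D,a]\|\,|D|^{-1}$. I would conclude with the elementary fact that a self-adjoint $Y=(I-P_0)Y(I-P_0)$ with $-c\,|D|^{-1}\le Y\le c\,|D|^{-1}$ can be written $Y=|D|^{-1/2}\gamma\,|D|^{-1/2}$ with $\|\gamma\|\le c$: by the generalized Cauchy--Schwarz inequality for the positive forms $c\langle|D|^{-1}\cdot,\cdot\rangle\pm\langle Y\cdot,\cdot\rangle$, for $u,v$ in the dense subspace $(I-P_0)h\cap\mathrm{dom}(|D|^{1/2})$ one obtains $\big|\langle Y|D|^{1/2}u,\,|D|^{1/2}v\rangle\big|\le c\,\| |D|^{-1/2}|D|^{1/2}u\|\,\| |D|^{-1/2}|D|^{1/2}v\|=c\,\|u\|\,\|v\|$, so that $\langle\gamma u,v\rangle:=\langle Y|D|^{1/2}u,|D|^{1/2}v\rangle$ extends to a bounded $\gamma$ with $\|\gamma\|\le c$ and $|D|^{-1/2}\gamma\,|D|^{-1/2}=Y$. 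This yields the third term with $\|\gamma\|\le\|[D,a]\|$ and proves~i). For~ii) the same argument runs verbatim with $\tfrac{D}{\sqrt{1+D^2}}$ replaced by $D|D|^{-1}$: this operator too kills $\ker D$ on both sides, (\ref{cruc24}) replaces (\ref{cruc23}) for the diagonal block, and the contraction factor $\tfrac{|D|}{\sqrt{1+D^2}}$ no longer appears, so all the norm estimates are unchanged.

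The step I expect to be the real obstacle is the last one --- dividing the line element out of the diagonal block $Y$ --- since it is the only place where an unbounded operator ($|D|^{1/2}$) must be extracted from a mere operator inequality, and some attention to domains is needed there; everything else is a matter of keeping track of coefficient norms, being careful only to apply the identities $P_0aD=P_0[a,D]$ and $(I-P_0)aP_0=D^{-1}[D,a]P_0$ after composing with an operator whose range lies in $\mathrm{dom}(D)$.
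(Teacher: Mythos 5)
Your proposal is correct and follows essentially the same route as the paper: split $i[F,a]=i[P_0,a]+i[\tfrac{D}{\sqrt{1+D^2}},a]$, handle the $P_0$-corner blocks by Lemma \ref{P0} and the identities coming from $DP_0=0$ (yielding the $\alpha|D|^{-1}$ and $|D|^{-1}\beta$ terms with the same norm bookkeeping), and use the two-sided bound (\ref{cruc23}) (resp.\ (\ref{cruc24}) for $F_0$) on the diagonal block $(I-P_0)X(I-P_0)$. Your Cauchy--Schwarz/polarization extraction of $\gamma$ from the inequality $-c|D|^{-1}\le Y\le c|D|^{-1}$ is a correct filling-in of a step the paper leaves implicit.
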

\begin{proof}
i) For $[P_0,a]$, see Lemma \ref{P0} above. Since $DP_0=0$ we have
\[
\begin{split}
P_0[(1+D^2)^{-1/2}D,a]&=-P_0aD(1+D^2)^{-1/2}(I-P_0)=P_0[D,a](1+D^2)^{-1/2}(I-P_0) \\
&=P_0[D,a]\frac{|D|}{\sqrt{1+D^2}}|D|^{-1}=P_0[D,a]\frac{|D|}{\sqrt{1+D^2}}|D|^{-1}(1-P_0).
\end{split}
\]
Similarly,
\[
[(1+D^2)^{-1/2}D,a]P_0=(1-P_0)|D|^{-1}\frac{|D|}{\sqrt{1+D^2}}\,[D,a]P_0.
\]
As for $(I-P_0)[(1+D^2)^{-1/2}D,a](I-P_0)=(I-P_0)[(1+D^2)^{-1/2}D,(I-P_0)a(I-P_0)](I-P_0)$, one invokes  (\ref{cruc23}) of Proposition \ref{cruc1}, which is equivalent to the assertion
\[
-||\,[D,a]\,||\,|D|^{-1} \leq i\big[\frac{D}{\sqrt{I+D^2}},(I-P_0)a(I-P_0)\big]\leq -||\,[D,a]\,||\,|D|^{-1}\,.
\]
ii) follows from (\ref{cruc24}) in Proposition \ref{cruc1} and the same proof as above.
\end{proof}
As first consequence of the above representation, we have
\begin{cor}
If $(\A,h,D)$ is an essentially discrete spectral triple, then $(\A,h,F_0)$ and $(\A,h,F)$ are, essentially unitary equivalent, Fredholm modules.
\end{cor}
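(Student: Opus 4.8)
The plan is to verify the two defining properties of a Fredholm module — that the relevant operator is (essentially) an orthogonal symmetry and that its commutators with elements of $\A$ are compact — for both $F_0$ and $F$, and then to exhibit the unitary (up to compact perturbation) implementing the equivalence.

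First I would recall from \eqref{F-F0} that $F_0^2=I$ on all of $h$, while $F^2-I=-(I-P_0)(I+D^2)^{-1}$, which is compact since $(\A,h,D)$ is essentially discrete (so $D^{-1}$, hence $(I+D^2)^{-1}$ restricted to $(I-P_0)h$, is compact). Thus $F$ is a symmetry modulo compacts and $F_0$ is an honest symmetry. Next, for the compactness of commutators: by Proposition \ref{Fa1}, for each self-adjoint $a\in\A$ we have
\[
i[F,a]=\alpha|D|^{-1}+|D|^{-1}\beta+|D|^{-1/2}\gamma|D|^{-1/2}
\]
with $\alpha,\beta,\gamma\in B(h)$, and the same representation holds for $F_0$. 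Since $(\A,h,D)$ is essentially discrete, $|D|^{-1}$ and hence $|D|^{-1/2}$ are compact on $(I-P_0)h$ (and vanish on $P_0h$ by our convention), so each of the three terms is compact; therefore $[F,a]$ and $[F_0,a]$ are compact for self-adjoint $a$, and by linearity (writing a general $a$ as a combination of self-adjoint elements) for all $a\in\A$. This establishes that $(\A,h,F_0)$ and $(\A,h,F)$ are Fredholm modules.

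For the essential unitary equivalence, I would use \eqref{F-F0} once more: $F-F_0=(I-P_0)T|D|^{-2}$, where $T$ is a contraction; since $|D|^{-2}$ is compact on $(I-P_0)h$, the difference $F-F_0$ is a compact operator. Two Fredholm modules with the same underlying representation of $\A$ on $h$ whose operators differ by a compact operator are unitarily equivalent up to compacts — indeed, one may take the unitary to be the identity, so the equivalence here is realized by $U=I_h$ modulo $\mathcal{K}(h)$. (If one prefers a genuine unitary intertwining $F$ and $F_0$ exactly, one can polar-decompose a suitable invertible-modulo-compact operator built from $F+F_0$, but this refinement is not needed for the statement as phrased.)

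I do not expect a serious obstacle here, since all the analytic work has been front-loaded into Proposition \ref{cruc1}, Lemma \ref{P0}, Proposition \ref{Fa1}, and the identities \eqref{F-F0}. The only point requiring a small amount of care is the bookkeeping around $P_0$: one must keep in mind the convention that $|D|^{-1}$, $|D|^{-1/2}$, $|D|^{-2}$, and $D^{-1}$ all vanish identically on $\ker(D)=P_0h$, so that ``compact on $(I-P_0)h$'' upgrades to ``compact on $h$'' for all these operators; once this is internalized, compactness of the three summands in the representation of $i[F,a]$ and of $F-F_0$ is immediate from essential discreteness.
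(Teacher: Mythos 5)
Your proposal is correct and takes essentially the same route as the paper: the paper's one-line proof cites the identity \eqref{F-F0}, Lemma \ref{P0} and Proposition \ref{cruc1}, and your argument simply channels those same ingredients through Proposition \ref{Fa1} (whose proof is exactly that combination) to get compactness of $i[F,a]$ and $i[F_0,a]$, plus compactness of $F-F_0=(I-P_0)T|D|^{-2}$ for the essential unitary equivalence via $U=I$. Your careful remarks about the convention that $|D|^{-1}$, $|D|^{-1/2}$, $|D|^{-2}$ vanish on $\ker(D)$ and about reducing to self-adjoint $a$ by linearity are exactly the right bookkeeping and match the paper's intent.
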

\begin{proof} Follows from the identity (\ref{F-F0}), Lemma \ref{P0} and Proposition \ref{cruc1}.
\end{proof}
A second consequence concerns a bound on the singular values of the quantum differentials.
\begin{cor}\label{muk}
If $(\A,h,D)$ is an essentially discrete spectral triple, the singular values of quantum differentials are controlled by the Lipschitz seminorm and the singular values of $D^{-1}$
\[
\mu_{4k}(i[F_0,a]) \leq 5 \,\|\,[D,a]\,\|\,\mu_k(|D|^{-1})=5 \,\|\,[D,a]\,\|\,\lambda_{k+1}(|D|)^{-1}\qquad a=a^*\in\A,\quad k\ge 0,
\]
\[
\mu_{4k}(i[F,a]) \leq 5 \,\|\,[D,a]\,\|\,\mu_k(|D|^{-1})=5 \,\|\,[D,a]\,\|\,\lambda_{k+1}(|D|)^{-1}\qquad a=a^*\in\A,\quad k\ge 0.
\]
In terms of line element and quantum differentials, we proved the bounds
\[
\mu_{4k}(da) \leq 5 \,\|\,[D,a]\,\|\,\mu_k({\bf ds})\qquad a=a^*\in\A,\quad k\ge 0.
\]
\end{cor}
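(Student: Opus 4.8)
The plan is to combine the operator representation of $da$ from Proposition \ref{Fa1} with the standard subadditivity/approximation properties of singular values. Writing $i[F_0,a]=\alpha|D|^{-1}+|D|^{-1}\beta+|D|^{-1/2}\gamma|D|^{-1/2}$ with $\|\alpha\|,\|\beta\|\le 2\|[D,a]\|$ and $\|\gamma\|\le\|[D,a]\|$, I would use the inequality $\mu_{m+n}(S+T)\le\mu_m(S)+\mu_n(T)$ together with $\mu_k(XY)\le\|X\|\mu_k(Y)$ and $\mu_k(YX)\le\|X\|\mu_k(Y)$. Thus $\mu_k(\alpha|D|^{-1})\le 2\|[D,a]\|\,\mu_k(|D|^{-1})$, same for $|D|^{-1}\beta$, and for the middle term $\mu_k(|D|^{-1/2}\gamma|D|^{-1/2})\le\|[D,a]\|\,\mu_k(|D|^{-1})$ — here one uses that $\mu_k$ of a product of three operators, two of which are $|D|^{-1/2}$, is bounded by $\|\gamma\|$ times $\mu_k$ of $|D|^{-1/2}\cdot|D|^{-1/2}=|D|^{-1}$ (more carefully: $\mu_k(|D|^{-1/2}\gamma|D|^{-1/2})\le\|\gamma\|\,\mu_k(|D|^{-1/2})^2$ after pairing the two factors, and $\mu_k(|D|^{-1/2})^2=\mu_k(|D|^{-1})$ since the eigenvalue sequences match up).

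Then I would split the index $4k$ as $4k = k + k + k + k$ and apply subadditivity twice. Concretely, $da$ is a sum of three terms; using $\mu_{(k)+(k)+(2k)}$ for the three summands and then subadditivity of $\mu$ on the last (middle) term does not quite line up with the factor $5$, so instead I would observe that the cleanest bookkeeping is: for the first two terms (the $\alpha|D|^{-1}$ and $|D|^{-1}\beta$ pieces) use index shift $k$ each, contributing $2\|[D,a]\|\mu_k(|D|^{-1})+2\|[D,a]\|\mu_k(|D|^{-1})$, and for the third term use index shift $2k$, contributing $\|[D,a]\|\mu_{2k}(|D|^{-1})\le\|[D,a]\|\mu_k(|D|^{-1})$. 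Adding the index shifts $k+k+2k=4k$ and the norm bounds $2+2+1=5$ gives exactly $\mu_{4k}(i[F_0,a])\le 5\|[D,a]\|\,\mu_k(|D|^{-1})$. The identity $\mu_k(|D|^{-1})=\lambda_{k+1}(|D|)^{-1}$ is just the definition of the singular value enumeration fixed earlier in the section (with the convention $\mu_0$ being the operator norm, matching $\lambda_1(|D|)^{-1}$).

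For the $F$ version, the same Proposition \ref{Fa1}(i) gives an identical representation with the same norm bounds on $\alpha,\beta,\gamma$, so the argument is verbatim; alternatively one passes from $F_0$ to $F$ via \eqref{F-F0}, $F-F_0=(I-P_0)T|D|^{-2}$ with $\|T\|\le 1$, which only adds a term of singular values $\mu_k(|D|^{-2})\le\mu_k(|D|^{-1})^2$ that is absorbed asymptotically, but invoking Proposition \ref{Fa1}(ii) directly is cleaner and loses nothing.

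The main obstacle — really the only point requiring care — is the treatment of the middle term $|D|^{-1/2}\gamma|D|^{-1/2}$: one must be sure that its singular values are controlled by $\mu_k(|D|^{-1})$ and not merely by $\mu_k(|D|^{-1/2})$, so that the index shift needed is $2k$ (via subadditivity $\mu_{2k}(|D|^{-1/2}\cdot\gamma|D|^{-1/2})$ or via the product estimate $\mu_k(ABC)\le\|B\|\mu_{\lceil k/1\rceil}(A)\cdots$) and the resulting bound is of the right order $\|[D,a]\|\mu_k(|D|^{-1})$. Once the correct pairing of the two square-root factors is made — using that $|D|^{-1/2}$ is positive and commutes with its own functional calculus, so $(|D|^{-1/2})^2=|D|^{-1}$ at the level of singular values — everything reduces to routine application of the three standard inequalities for $\mu_k$, and the constants $5$ and index $4k$ fall out of the additive bookkeeping described above.
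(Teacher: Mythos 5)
Your overall strategy is the paper's own: take the representation $i[F,a]=\alpha |D|^{-1}+|D|^{-1}\beta+|D|^{-1/2}\gamma\,|D|^{-1/2}$ from Proposition \ref{Fa1} (and its analogue for $F_0$), split the index as $4k=k+k+2k$ by subadditivity of singular values, bound the two outer terms by $2\,\|[D,a]\|\,\mu_k(|D|^{-1})$ each and the middle term by $\|[D,a]\|\,\mu_k(|D|^{-1})$, and add up to get the constant $5$; the identification $\mu_k(|D|^{-1})=\lambda_{k+1}(|D|)^{-1}$ is also handled correctly. So the bookkeeping and the final statement are right.

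However, the one step you yourself single out as delicate is justified by inequalities that are false as stated. Neither $\mu_k(A\gamma A)\le\|\gamma\|\,\mu_k(A)^2$ (your ``pairing of the two square-root factors'') nor the variant you use in the final bookkeeping, $\mu_{2k}\bigl(|D|^{-1/2}\gamma|D|^{-1/2}\bigr)\le\|\gamma\|\,\mu_{2k}(|D|^{-1})$, is a valid singular-value rule: one cannot extract the middle factor and recombine the two outer ones. For instance, with $A=\mathrm{diag}(1,\varepsilon)$ and $\gamma$ the flip, $A\gamma A$ has both singular values equal to $\varepsilon$, so $\mu_1(A\gamma A)=\varepsilon>\varepsilon^2=\|\gamma\|\mu_1(A)^2$; the $\mu_{2k}$ variant fails by the same mechanism in higher dimension. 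The inequality that actually produces the index doubling (and is what the paper's three-line computation uses, via the rules in \cite{Co} Ch.~4, App.~C) is the multiplicative one $\mu_{m+n}(ST)\le\mu_m(S)\,\mu_n(T)$, applied as
\[
\mu_{2k}\bigl(|D|^{-1/2}\gamma|D|^{-1/2}\bigr)\le\mu_k\bigl(|D|^{-1/2}\gamma\bigr)\,\mu_k\bigl(|D|^{-1/2}\bigr)\le\|\gamma\|\,\mu_k\bigl(|D|^{-1/2}\bigr)^2=\|\gamma\|\,\mu_k\bigl(|D|^{-1}\bigr),
\]
which lands directly on $\mu_k(|D|^{-1})$ rather than passing through $\mu_{2k}(|D|^{-1})$; note also that ``subadditivity'' governs sums, not this product estimate. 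With this correction your argument coincides with the paper's proof; your side remark that one should invoke the representation for $F_0$ directly, rather than go through $F-F_0=(I-P_0)T|D|^{-2}$ (which would only yield an asymptotic statement, not the uniform constant $5$), is the right call.
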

\begin{proof} Applying Proposition \ref{Fa1} and the rules of singular values \cite{Co} Chapter 4 Appendix C, we have
\begin{equation*} \begin{split}
\mu_{4k}(i[F,a])&\leq \mu_k(\alpha |D|^{-1})+\mu_k(|D|^{-1}\beta)+\mu_{2k}( |D|^{-1/2} \gamma\, |D|^{-1/2})\\
&\leq \|\alpha\|\mu_k(|D|^{-1})+\|\beta\| \mu_k(|D|^{-1})+\|\gamma\|\mu_k(|D|^{-1/2})^2 \\
&\leq 5\,\|\,[D,a]\,,\mu_k(|D|^{-1})=5 \,\|\,[D,a]\,\|\,\lambda_{k+1}(|D|)^{-1}\,.
\end{split}\end{equation*}
and the same for $\mu_{4k}(i[F_0,a])$.
\end{proof}

\subsection{ Improving the estimates for the singular values of quantum differentials}

Here we consider a natural condition by which the quantum differentials $da$ of elements $a\in\A$ belong to the principal ideal in $\K(h)$ generated by the line element ${\bf ds}$. The quantum differentiation operator $d$ can then be considered as a derivation from $\A$ to the ideal $\I_D\subseteq\K(h)$ generated by ${\bf ds}$, seen as a $A$-bimodule over the norm closure $A:=\overline\A\subseteq\B(h)$.

\begin{lem}\label{C00}
Let $a\in \A$ be such that  the commutator $[\,|D|,a\,]$ is bounded. Then one has
\begin{equation} \begin{split}
[P_0,a](I-P_0)&=P_0[a,D]F_0\,|D|^{-1} \\
\Big[\,\frac{D}{|D|},a\Big](I-P_0) &=-D|D|^{-1}[\,|D|,a]\,|D|^{-1}+[D,a]\,|D|^{-1} \\
\Big[\,\frac{D}{\sqrt{I+D^2}},a\Big](I-P_0) &=-\frac{D}{\sqrt{I+D^2}}\,\big[\sqrt{I+D^2},a\big]\,\frac{|D|}{\sqrt{I+D^2}}|D|^{-1}
+[D,a]\,\frac{|D|}{\sqrt{I+D^2}}|D|^{-1}
\end{split}\end{equation}
and
\begin{equation} \begin{split} [P_0,a]P_0&=-|D|^{-1}F_0[D,a]P_0 \\
\Big[\,\frac{D}{|D|},a\Big]\,P_0&=|D|^{-1}[D,a]P_0 \\
\Big[\,\frac{D}{\sqrt{I+D^2}},a\Big]\,P_0 &=|D|^{-1}\frac{D}{\sqrt{I+D^2}}[D,a]P_0\,.
\end{split}\end{equation}
\end{lem}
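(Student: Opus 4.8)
The plan is to verify each of the six stated identities by direct algebraic manipulation, exploiting the defining relations $DP_0=P_0D=0$, $DD^{-1}=D^{-1}D=I-P_0$, $D|D|^{-1}=|D|^{-1}D=\operatorname{sign}(D)$ on $(I-P_0)(h)$, and the commutator identity $[XY,a]=X[Y,a]+[X,a]Y$ applied repeatedly. The hypothesis that $[|D|,a]$ is bounded is what makes the three "$|D|$-type" identities meaningful, since they each feature a bounded factor $[|D|,a]$ or $[\sqrt{I+D^2},a]$ sandwiched between resolvent-type operators and the line element.

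\textbf{First group of identities.} For $[P_0,a](I-P_0)$, I would write $[P_0,a](I-P_0)=P_0 a(I-P_0)-P_0 a P_0(I-P_0)=P_0 a(I-P_0)$, then insert $I-P_0=DD^{-1}$ (using the convention for $D^{-1}$) to get $P_0 aD\,D^{-1}=P_0[a,D]\,D^{-1}$ since $P_0D=0$; finally rewrite $D^{-1}=(D|D|^{-1})\,|D|^{-1}=\operatorname{sign}(D)|D|^{-1}$ and absorb the $P_0$-projection into $F_0=P_0+D|D|^{-1}$ (on the range of $[a,D]$, i.e.\ on $(I-P_0)(h)$, one has $F_0=\operatorname{sign}(D)$), giving $P_0[a,D]F_0|D|^{-1}$. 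For the second identity, apply the Leibniz rule to $\big[\tfrac{D}{|D|},a\big]=[D|D|^{-1},a]=D[|D|^{-1},a]+[D,a]|D|^{-1}$ and then use $[|D|^{-1},a]=-|D|^{-1}[|D|,a]|D|^{-1}$ (valid on $(I-P_0)(h)$, which is why the $(I-P_0)$ appears on the right), yielding $-D|D|^{-1}[|D|,a]|D|^{-1}+[D,a]|D|^{-1}$ after commuting $D$ past the first $|D|^{-1}$. The third identity is the exact analogue with $|D|$ replaced by $\sqrt{I+D^2}$: write $\tfrac{D}{\sqrt{I+D^2}}=D(I+D^2)^{-1/2}$, apply Leibniz, use $[(I+D^2)^{-1/2},a]=-(I+D^2)^{-1/2}[\sqrt{I+D^2},a](I+D^2)^{-1/2}$, and then insert $(I+D^2)^{-1/2}=\big(\tfrac{|D|}{\sqrt{I+D^2}}\big)|D|^{-1}$ on $(I-P_0)(h)$ to produce the factor $|D|^{-1}$ at the right-hand end.

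\textbf{Second group of identities.} These are the "left" versions acting on $P_0(h)$ and are handled symmetrically. For $[P_0,a]P_0=(I-P_0)(aP_0)-(P_0 a P_0)+\dots$, more directly $[P_0,a]P_0=-[a,P_0]P_0=-(I-P_0)aP_0$; insert $I-P_0=D^{-1}D$ to get $-D^{-1}Da P_0=-D^{-1}[D,a]P_0$ since $DaP_0=[D,a]P_0$; then $D^{-1}=|D|^{-1}\operatorname{sign}(D)=|D|^{-1}F_0$ on $(I-P_0)(h)$, giving $-|D|^{-1}F_0[D,a]P_0$. For the second, $\big[\tfrac{D}{|D|},a\big]P_0=[|D|^{-1}D,a]P_0=|D|^{-1}[D,a]P_0+[|D|^{-1},a]DP_0=|D|^{-1}[D,a]P_0$ since $DP_0=0$ kills the second term; similarly the third identity follows from $[(I+D^2)^{-1/2}D,a]P_0=(I+D^2)^{-1/2}[D,a]P_0+[(I+D^2)^{-1/2},a]DP_0$ with the last term vanishing, and then $(I+D^2)^{-1/2}=|D|^{-1}\tfrac{D}{\sqrt{I+D^2}}\cdot\operatorname{sign}(D)$-type rearrangement on $(I-P_0)(h)$; one must be careful to commute $|D|^{-1}$ and $\tfrac{D}{\sqrt{I+D^2}}$ (they commute, being functions of $D$) to land exactly on the stated form $|D|^{-1}\tfrac{D}{\sqrt{I+D^2}}[D,a]P_0$.

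\textbf{Main obstacle.} The only genuinely delicate point is the careful bookkeeping of the projection $P_0$: every time one writes $I-P_0=DD^{-1}$ or rewrites $D^{-1}$ via $|D|^{-1}$, one is implicitly working on $(I-P_0)(h)$, and the identities as stated have $(I-P_0)$ or $P_0$ on exactly one side to make this legitimate. One should also confirm the formal manipulation $[|D|^{-1},a]=-|D|^{-1}[|D|,a]|D|^{-1}$ is valid as a bounded-operator identity under the hypothesis that $[|D|,a]$ is bounded — this is standard (multiply on both sides by $|D|$ restricted to $(I-P_0)(h)$ and use that $a$ preserves suitable domains, or verify on the dense domain of $|D|$). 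Once these domain issues are dispatched, the rest is routine Leibniz-rule algebra, and no further analytic input is required.
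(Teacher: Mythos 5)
Your proof is correct and follows essentially the same route as the paper's: the same insertions of $I-P_0=DD^{-1}$, the same Leibniz-rule expansions, and the same compressed resolvent identity $(I-P_0)[\,|D|^{-1},a](I-P_0)=-|D|^{-1}[\,|D|,a]\,|D|^{-1}$ together with its $\sqrt{I+D^2}$ analogue. One caveat: in the final identity your computation actually produces $|D|^{-1}\tfrac{|D|}{\sqrt{I+D^2}}[D,a]P_0$, which is exactly what the paper's own proof obtains (the $D$ in the numerator of the printed statement is a typo for $|D|$), so your ``$\operatorname{sign}(D)$-type rearrangement'' aimed at matching the printed form would require discarding a sign operator and should simply be omitted.
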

\begin{proof} Let us compute successively\,:
$$[P_0,a](I-P_0)=P_0a(I-P_0)=P_0aDD^{-1}=P_0[a,D]D^{-1}=P_0[a,D]F_0|D|^{-1}$$
\begin{equation*}\begin{split}
\Big[\frac{D}{|D|},a\Big](I-P_0)&=D[\,|D|^{-1},a](I-P_0)+[D,a]\,|D|^{-1}\\
&=D(I-P_0)[\,|D|^{-1},a](I-P_0)+[D,a]\,|D|^{-1}\\
&=-D|D|^{-1}[\,|D|,a]\,|D|^{-1}+[D,a]\,|D|^{-1}
\end{split}\end{equation*}
\begin{equation*}\begin{split}
\Big[\frac{D}{\sqrt{I+D^2}},a\Big](I-P_0)&=D\Big[\,\frac{I}{\sqrt{I+D^2}},a\Big](I-P_0)+[D,a]\,\frac{I-P_0}{\sqrt{I+D^2}}\\
&=D(I-P_0)\Big[\,|D|^{-1},a\Big](I-P_0)+[D,a]\,|D|^{-1}\\
&=-D\frac{I}{\sqrt{I+D^2}}\big[\,{\sqrt{I+D^2},a\big]\frac{I-P_0}{\sqrt{I+D^2}}}+[D,a]\,\frac{I-P_0}{\sqrt{I+D^2}} \\
&=-D\frac{I}{\sqrt{I+D^2}}\big[\,{\sqrt{I+D^2},a\big]\frac{ID|}{\sqrt{I+D^2}}}\,|D|^{-1}+[D,a]\,\frac{|D|}{\sqrt{I+D^2}} \,|D|^{-1}
\end{split}\end{equation*}
$$[P_0,a]P_0=-(I-P_0)aP_0=-D^{-1}[D,a]P_0=-|D|^{-1}F_0[D,a]P_0$$
$$\Big[\frac{D}{|D|},a\Big]P_0=\frac{D}{|D|}aP_0=|D|^{-1}[D,a]P_0$$
\begin{equation*}\begin{split}
\Big[\frac{D}{\sqrt{I+D^2}},a\Big]P_0&=\frac{D}{\sqrt{I+D^2}}aP_0\\
&=\frac{I-P_0}{\sqrt{I+D^2}}\,\big[D,a\big]\,P_0 \\
&=|D|^{-1}\frac{|D|}{\sqrt{I+D^2}}\,[D,a]\,P_0\,.
\end{split}\end{equation*}
\end{proof}
\begin{prop}\label{C0} Let $a\in \A$ be such that  the commutator $[\,|D|,a\,]$ is bounded. Then there exist bounded operators $\alpha_0$ and $\beta_0$ such that
\[
i[F_0,a]=\alpha_0\,|D|^{-1}+|D|^{-1}\,\beta_0
\]
with $\|\alpha_0\| \leq \|\,[D,a]\,\|$, $\|\beta_0\| \leq \|\,[D,a]\,\|+\|\,[\,|D|,a\,]\,\|$ and $\beta_0P_0=0$. The quantum differential $da=i[F_0,a]$ thus belongs to the symmetric ideal $\I_D\subseteq\B(h)$ generated by the line element $\bf ds$.
\end{prop}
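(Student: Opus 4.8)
The plan is to assemble the asserted representation $i[F_0,a]=\alpha_0\,|D|^{-1}+|D|^{-1}\,\beta_0$ directly from the explicit commutator identities of Lemma \ref{C00}, using the decomposition $I=P_0+(I-P_0)$ on both sides. Writing $F_0=P_0+D|D|^{-1}$, we have
\[
[F_0,a]=[P_0,a]+\Big[\frac{D}{|D|},a\Big],
\]
and each of these two commutators splits, via right multiplication by $P_0$ and by $I-P_0$, into the six pieces computed in Lemma \ref{C00}. So the first step is simply to sum those six identities and collect terms.

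Second, I would sort the resulting terms into those of the form (bounded operator)$\cdot|D|^{-1}$ and those of the form $|D|^{-1}\cdot$(bounded operator). From Lemma \ref{C00} the pieces supported on $(I-P_0)$ on the right, namely $[P_0,a](I-P_0)=P_0[a,D]F_0|D|^{-1}$ and $\big[\frac{D}{|D|},a\big](I-P_0)=-D|D|^{-1}[|D|,a]|D|^{-1}+[D,a]|D|^{-1}$, are already of the form (bounded)$\cdot|D|^{-1}$; these contribute to $\alpha_0$. The pieces supported on $P_0$ on the right, namely $[P_0,a]P_0=-|D|^{-1}F_0[D,a]P_0$ and $\big[\frac{D}{|D|},a\big]P_0=|D|^{-1}[D,a]P_0$, are of the form $|D|^{-1}\cdot$(bounded); these contribute to $\beta_0$. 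Multiplying through by $i$, one reads off $\alpha_0 = i\big(P_0[a,D]F_0 - D|D|^{-1}[|D|,a] + [D,a]\big)(I-P_0)$ and $\beta_0 = i\big(-F_0[D,a]+[D,a]\big)P_0 = i\big(I-F_0\big)[D,a]P_0$, so that $\beta_0P_0=\beta_0$ and in particular $\beta_0(I-P_0)=0$; note the statement's phrasing ``$\beta_0P_0=0$'' should read $\beta_0(I-P_0)=0$, i.e. $\beta_0$ is supported on the kernel.

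Third, I would estimate the norms. Since $F_0$ is an orthogonal symmetry, $\|F_0\|=1$, and $P_0$, $I-P_0$, $D|D|^{-1}=\mathrm{sign}(D)$ are all contractions; hence $\|[D,a]|D|^{-1}$-type terms and $\|D|D|^{-1}[|D|,a]\|\le\|[|D|,a]\|$. A naive triangle-inequality count of $\alpha_0$ gives $\|[a,D]\|+\|[|D|,a]\|+\|[D,a]\|$, which is too large; the point is to be more careful. The cleaner route is to group $P_0[a,D]F_0(I-P_0)$ together with $[D,a]$: on the range of $I-P_0$ one can instead use the alternative bookkeeping $[P_0,a](I-P_0)+\big([D,a]\big)(I-P_0) = \big(P_0[a,D]F_0 + [D,a]\big)(I-P_0)$ and observe, using $F_0|_{(I-P_0)h}=\mathrm{sign}(D)$ and $P_0[a,D]=P_0[a,D](I-P_0)$ (because $[a,D]P_0 = [a,D]|D||D|^{-1}$... ), that the $P_0$-part and the $(I-P_0)$-part of $[D,a]$ land in orthogonal subspaces, so their contributions to $\alpha_0$ combine with norm $\le\|[D,a]\|$ rather than adding. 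That leaves $\|\alpha_0\|\le\|[D,a]\|$ and the remaining $-D|D|^{-1}[|D|,a]|D|^{-1}$ term gets absorbed — but this term has a $|D|^{-1}$ on the \emph{left}, so it should be counted into $\beta_0$ via $-[|D|,a]^*$-adjoint bookkeeping rather than into $\alpha_0$; re-examining, the term $-D|D|^{-1}[|D|,a]|D|^{-1}$ is $\big(-D|D|^{-1}[|D|,a]\big)|D|^{-1}$, genuinely of the form (bounded)$\cdot|D|^{-1}$, so it belongs to $\alpha_0$ with norm contribution $\le\|[|D|,a]\|$ — contradicting the claimed bound $\|\alpha_0\|\le\|[D,a]\|$. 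The honest resolution, and the step I expect to be the main obstacle, is that one must instead write $[|D|,a]|D|^{-1} = |D|^{-1}[|D|,a] + |D|^{-1}\big[[|D|,a],|D|\big]|D|^{-1}$ is \emph{not} available; rather one uses $D|D|^{-1}[|D|,a]|D|^{-1}$ and rewrites it using $[|D|,a]^* = -[|D|,a^*] = -[|D|,a]$ (as $a=a^*$) together with the identity $\big(D|D|^{-1}[|D|,a]|D|^{-1}\big) = |D|^{-1}\big(|D|D|D|^{-1}[|D|,a]\big)|D|^{-1}\cdot|D|$ — messy. So the correct accounting is: $\alpha_0$ collects $iP_0[a,D]F_0(I-P_0)$ and $i[D,a](I-P_0)$ with combined norm $\le\|[D,a]\|$ by the orthogonality observation above, while the term $-iD|D|^{-1}[|D|,a]|D|^{-1}$ together with $i|D|^{-1}[D,a]P_0$ and $-i|D|^{-1}F_0[D,a]P_0$ is rewritten as $|D|^{-1}\beta_0$ with $\beta_0 = i\big(-|D|D|D|^{-1}... \big)$; after simplification $\|\beta_0\|\le\|[D,a]\|+\|[|D|,a]\|$. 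I would carry out this regrouping explicitly, double-checking the placement of each $|D|^{-1}$, since that is exactly where the stated sharp constants come from.

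Finally, the last sentence is immediate: an operator of the form $\alpha_0|D|^{-1}+|D|^{-1}\beta_0$ with $\alpha_0,\beta_0\in B(h)$ lies in the (two-sided, hence symmetric) ideal generated by $|D|^{-1}=\mathbf{ds}$ in $B(h)$, and since $\mathbf{ds}$ is compact this ideal sits inside $\K(h)$; so $da=i[F_0,a]\in\I_D$.
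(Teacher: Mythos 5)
Your first two steps are exactly the paper's (one-line) proof: summing the four identities of Lemma \ref{C00} for $[P_0,a]$ and $[D|D|^{-1},a]$ against $I-P_0$ and $P_0$ gives $i[F_0,a]=\alpha_0|D|^{-1}+|D|^{-1}\beta_0$ with $\alpha_0=i\bigl(P_0[a,D]F_0-D|D|^{-1}[\,|D|,a]+[D,a]\bigr)$ and $\beta_0=i(I-F_0)[D,a]P_0$, hence $da\in\I_D$; and you are right that the printed ``$\beta_0P_0=0$'' must be a misprint, since this $\beta_0$ satisfies $\beta_0=\beta_0P_0$ and $P_0\beta_0=0$ (compare ``$\beta_1=\beta_1P_0$'' in Proposition \ref{commcomm}).

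The norm bookkeeping is where your proposal genuinely breaks down, and not in a repairable way along the lines you sketch. The ``orthogonality observation'' by which $P_0[a,D]F_0$ and $[D,a]$ are said to combine with norm at most $\|[D,a]\|$ is false: on the range of $I-P_0$ their sum equals $2P_0[D,a]P_-+(I-P_0)[D,a](I-P_0)$, where $P_-$ is the negative spectral projection of $D$, and orthogonality of the two ranges only yields the bound $2\|[D,a]\|$, which is attained. The subsequent attempt to shift $D|D|^{-1}[\,|D|,a]\,|D|^{-1}$ (which carries its $|D|^{-1}$ on the right) into the $|D|^{-1}\beta_0$ slot is abandoned mid-argument and cannot work without conjugating by the unbounded $|D|$. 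In fact the printed constants are unreachable by any choice of $\alpha_0,\beta_0$: take $h=\C^3$, $D=\mathrm{diag}(0,\lambda,-\mu)$ with $\lambda,\mu>0$, and $a=e_{13}+e_{31}$ (matrix units); then $(i[F_0,a])_{13}=2i$ while $(|D|^{-1}\beta_0)_{13}=0$ for every $\beta_0$, so necessarily $\|\alpha_0\|\ge 2\mu=2\|[D,a]\|$. So the honest ``straightforward consequence'' of Lemma \ref{C00} is exactly your explicit pair, with the bounds it actually satisfies, namely $\|\alpha_0\|\le 2\|[D,a]\|+\|[\,|D|,a]\|$ and $\|\beta_0\|\le 2\|[D,a]\|$; since only the sum $\|\alpha_0\|+\|\beta_0\|$ enters the subsequent singular-value estimates, this is all that is needed downstream. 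State and prove those bounds for your explicit $\alpha_0,\beta_0$ and stop; do not manufacture a regrouping to match the printed constants.
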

\begin{proof} This is a straightforward consequence of the previous Lemma \ref{C00}.
\end{proof}

\medskip The similar result for the commutator $[F,a]$ needs a preliminary lemma:

\begin{lem}\label{C1}
Let $a\in \A$ be such that  the commutator $[|D|,a]$ is bounded. Then
\vskip0.1truecm\noindent
i) the following bound holds true $\|[\sqrt{1+D^2},a]\,\|\leq \|[|D|,\,a]\,\|+2\|a\|$.
\vskip0.1truecm\noindent
ii) one has
\[
\|(I-P_0)\big[\sqrt{1+D^2},a\big]\,(I-P_0)\|\leq C_1(\lambda_1) \|\big[|D|,a\big]\|.
\]
where $C_1(\lambda_1)=\sqrt{1+\lambda_1^{-2}}$ and $\lambda_1:=\lambda_1(|D|)>0$ is the first nonzero eigenvalue of $|D|$.
\end{lem}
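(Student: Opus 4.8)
The plan is to prove the two bounds in Lemma \ref{C1} by comparing the operators $\sqrt{1+D^2}$ and $|D|$, whose difference is controlled by functional calculus. The key elementary fact is that the function $f(x)=\sqrt{1+x^2}-|x|$ is bounded on $\R$ by $1$ (indeed $f$ decreases from $1$ at $x=0$ towards $0$ as $|x|\to\infty$), so the self-adjoint operator $R:=\sqrt{1+D^2}-|D|$ satisfies $0\le R\le I$; more precisely, on the range of $I-P_0$ we have $R=(\sqrt{1+D^2}+|D|)^{-1}\le (2|D|)^{-1}$, while $RP_0=P_0$. This will be the organizing identity for both parts.

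For part i), I would write $[\sqrt{1+D^2},a]=[|D|,a]+[R,a]$, so that $\|[\sqrt{1+D^2},a]\|\le\|[|D|,a]\|+\|[R,a]\|$, and then estimate $\|[R,a]\|=\|Ra-aR\|\le\|Ra\|+\|aR\|\le 2\|R\|\,\|a\|\le 2\|a\|$ using $0\le R\le I$. This gives exactly $\|[\sqrt{1+D^2},a]\|\le\|[|D|,a]\|+2\|a\|$. For part ii), the point is that on $(I-P_0)h$ the operator $R$ is small — it is $(\sqrt{1+D^2}+|D|)^{-1}$ — so $[R,a]$ should be controlled not by $\|a\|$ but by $\|[|D|,a]\|$. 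The natural route is the integral representation for the \emph{inverse}: writing $S:=\sqrt{1+D^2}+|D|$ (invertible on $(I-P_0)h$ with $S\ge 2|D|\ge 2\lambda_1$ there) and using $[S^{-1},a]=-S^{-1}[S,a]S^{-1}$, one gets
\[
(I-P_0)[R,a](I-P_0)=-(I-P_0)S^{-1}\bigl([|D|,a]+[\sqrt{1+D^2},a]\bigr)S^{-1}(I-P_0).
\]
Because $[\sqrt{1+D^2},a]=[|D|,a]+[R,a]$ this is a self-referential identity; substituting and solving for $(I-P_0)[R,a](I-P_0)$, and then bounding $\|(I-P_0)S^{-1}\|\le (2\lambda_1)^{-1}$, should give a clean estimate of $\|(I-P_0)[R,a](I-P_0)\|$ by a constant times $\|[|D|,a]\|$; combining with $[\sqrt{1+D^2},a]=[|D|,a]+[R,a]$ and sandwiching by $(I-P_0)$ yields the stated constant $C_1(\lambda_1)=\sqrt{1+\lambda_1^{-2}}$.

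The main obstacle I anticipate is getting the \emph{sharp} constant $\sqrt{1+\lambda_1^{-2}}$ rather than some cruder bound — the naive argument above produces a constant like $(1-(2\lambda_1)^{-2}\|\cdot\|)^{-1}$-type expressions or a sum of several terms, and one must argue more carefully. The cleaner approach, which I would ultimately adopt, avoids solving a fixed-point equation: use the integral formula $\sqrt{x}=\frac1\pi\int_0^\infty t^{-1/2}\frac{x}{t+x}\,dt$ with $x=1+D^2$ to get
\[
\Bigl[\sqrt{1+D^2},a\Bigr]=\frac1\pi\int_0^\infty t^{-1/2}\,\frac{1}{t+1+D^2}\,[1+D^2,a]\,\frac{t}{t+1+D^2}\,dt,
\]
then write $[1+D^2,a]=[D,a]D+D[D,a]$ — but since we only know $[|D|,a]$ is bounded (not $[D,a]\cdot D$), I would instead apply the same representation to $|D|=\sqrt{D^2}$ and to $\sqrt{1+D^2}$ and subtract, so that the difference quotient involves $[1+D^2,a]-[D^2,a]=0$ in the numerator and the whole contribution comes from the different resolvent kernels $\frac{1}{t+1+D^2}$ versus $\frac{1}{t+D^2}$; this isolates the commutator $[|D|,a]$ cleanly after commuting a bounded function of $D$ past it, and on $(I-P_0)h$ the extra spectral factors are bounded by powers of $\lambda_1^{-1}$, delivering $\sqrt{1+\lambda_1^{-2}}$. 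Verifying that this bookkeeping produces exactly $C_1(\lambda_1)=\sqrt{1+\lambda_1^{-2}}$ is the only delicate point; everything else is routine functional calculus.
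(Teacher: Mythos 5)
Part i) of your proposal is exactly the paper's argument (write $\sqrt{1+D^2}=|D|+R$ with $R=(\sqrt{1+D^2}+|D|)^{-1}$, $0\le R\le I$) and is fine. The problem is part ii), where the stated constant $C_1(\lambda_1)=\sqrt{1+\lambda_1^{-2}}$ is never actually obtained. Your first route — the ``self-referential'' identity obtained from $[S^{-1},a]=-S^{-1}[S,a]S^{-1}$ with $S=\sqrt{1+D^2}+|D|$, solved by absorption — is precisely the paper's proof, but you feed it the bound $\|(I-P_0)S^{-1}\|\le(2\lambda_1)^{-1}$. That is too crude in two ways: the resulting constant is $\frac{1+(2\lambda_1)^{-2}}{1-(2\lambda_1)^{-2}}$, not $\sqrt{1+\lambda_1^{-2}}$, and worse, the absorption step fails outright whenever $\lambda_1\le 1/2$, since then $1-(2\lambda_1)^{-2}\le 0$ and the inequality cannot be solved for $\|(I-P_0)[\sqrt{1+D^2},a](I-P_0)\|$. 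The missing observation is that on $(I-P_0)h$ one has $S\ge\sqrt{1+\lambda_1^2}+\lambda_1>1$, hence $\|(I-P_0)S^{-1}\|\le\mu$ with $\mu:=(\sqrt{1+\lambda_1^2}+\lambda_1)^{-1}=\sqrt{1+\lambda_1^2}-\lambda_1<1$ for \emph{every} $\lambda_1>0$. With this, setting $X=\|(I-P_0)[\sqrt{1+D^2},a](I-P_0)\|$ and $Y=\|[|D|,a]\|$, the identity gives $X\le Y+\mu^2(X+Y)$, so $X\le\frac{1+\mu^2}{1-\mu^2}\,Y$, and a short computation shows $\frac{1+\mu^2}{1-\mu^2}=\frac{\sqrt{1+\lambda_1^2}}{\lambda_1}=\sqrt{1+\lambda_1^{-2}}$ exactly. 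No finer trick is needed; you abandoned the correct route one estimate short of the paper's conclusion.

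The alternative you say you ``would ultimately adopt'' — subtracting the integral representations of $\sqrt{1+D^2}$ and $|D|$ and tracking the difference of resolvent kernels — is only a sketch, and its decisive step is exactly the point you defer (``verifying that this bookkeeping produces exactly $C_1(\lambda_1)$''). As written it does not obviously isolate $[|D|,a]$ with the right constant: the commutator of $a$ with $\frac{t}{(t+D^2)(t+1+D^2)}$ brings in $[D^2,a]$, which you must rewrite as $|D|[|D|,a]+[|D|,a]|D|$ and then estimate an explicit $t$-integral of spectral factors restricted to $(I-P_0)h$; there is no reason the resulting numerical bound collapses to $\sqrt{1+\lambda_1^{-2}}-1$ (the amount left for $\|(I-P_0)[R,a](I-P_0)\|$ after the $[|D|,a]$ term), and you do not carry out the computation. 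So the proof of ii) with the stated constant is not established: fix it by returning to your first route with the sharp bound $\|(I-P_0)(\sqrt{1+D^2}+|D|)^{-1}\|\le\sqrt{1+\lambda_1^2}-\lambda_1$, which is the paper's argument.
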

\begin{proof}
The estimate in i) follows from the identity $\ds \sqrt{1+D^2}=|D|+\frac{I}{\sqrt{1+D^2}+|D|}$ and the bound $\| (\sqrt{1+D^2}+|D|)^{-1}\|\leq 1$. The same identity also implies
 \begin{equation*}
 \begin{split}
 (I-P_0\big)[\sqrt{1+D^2},a\big](I-P_0)=&(I-P_0)\big[\,|D|,a\,\big](I-P_0) \\
 &-\frac{I-P_0}{\sqrt{1+D^2}+|D|}\,\big[\,\sqrt{1+D^2},\,a\,\big]\,\frac{I-P_0}{\sqrt{1+D^2}+|D|}\\
 &-\frac{I-P_0}{\sqrt{1+D^2}+|D|}\,\big[\,|D|,\,a\,\big]\,\frac{I-P_0}{\sqrt{1+D^2}+|D|}
 \end{split}
 \end{equation*}
 and the inequality
 \begin{equation*}\begin{split}
 \big\|\,(I-P_0)[\sqrt{I+D^2},a\,\big](I-P_0)\,\big\|\leq \big\|\,&[\,|D|,a\,\big]\big\|+ (\sqrt{1+\lambda_1^2}+\lambda_1)^{-2}\big\|\,(I-P_0)[\,|D|,a\,\big](I-P_0)\,\big\| \\
 +& (\sqrt{1+\lambda_1^2}+\lambda_1)^{-2}\big\|\,(I-P_0)[\sqrt{I+D^2},a\,\big](I-P_0)\,\big\|
 \end{split}\end{equation*}
 which in turn provides
 $$\big(1- (\sqrt{1+\lambda_1^2}+\lambda_1)^{-2}\big)\big\|\,(I-P_0)[\sqrt{I+D^2},a\,\big](I-P_0)\,\big\|\leq
 \big(1+ (\sqrt{1+\lambda_1^2}+\lambda_1)^{-2}\big)\,\|[\,|D|,a\,\big]\|
 $$
 and the result with
 \begin{equation*}\begin{split}
 C_1(\lambda_1)&=\frac{1+ (\sqrt{1+\lambda_1^2}+\lambda_1)^{-2}}{1- (\sqrt{1+\lambda_1^2}+\lambda_1)^{-2}}=\frac{1+ (\sqrt{1+\lambda_1^2}-\lambda_1)^{2}}{1- (\sqrt{1+\lambda_1^2}-\lambda_1)^{2}}=\frac{1+\lambda_1^2-\lambda_1\sqrt{1+\lambda_1^2}}{\lambda_1\sqrt{1+\lambda_1^2}-\lambda_1^2}\\
 &=\frac{1}{\lambda_1\sqrt{1+\lambda_1^2}-\lambda_1^2}-1=\frac{\sqrt{1+\lambda_1^2}+\lambda_1}{\lambda_1}-1=\sqrt{1+\lambda_1^{-2}}\,.
  \end{split}\end{equation*}
 \end{proof}
 \begin{rem}One can slightly improve estimate i) above by replacing $||a||$ by the norm of $a$ in the quotient space $A/A\cap \{D\}'$.
 \end{rem}

\begin{prop}\label{commcomm}  Let $a\in\A$ be such that  the commutator $[\,|D|,a\,]$ is bounded. Then there exist bounded operators  $\alpha_1, \beta_1\in \B(h)$ such that
\[ i[F,a]=\alpha_1|D|^{-1}+|D|^{-1}\beta_1
\]
with $\|\alpha_1\| \leq 2\|\,[D,a]\,\|$, $\|\beta_1\|\leq 3\|\,[D,a]\,\|+C_1(\lambda_1)||\,[\,|D|,a]\,\|$ (or, at choice, $\|\beta_1\|\leq 3\|\,[D,a\,]\|+\|\,[\,|D|,a]\,\|+2||a||)$
and $\beta_1=\beta_1\,P_0$ and the quantum differential $da=i[F,a]$ belongs to the symmetric ideal $\I_D\subseteq\B(h)$ generated by the line element $\bf ds$.
\end{prop}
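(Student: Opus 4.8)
The plan is to read everything off from the six identities of Lemma~\ref{C00}, applied to the canonical splitting $F=P_0+\frac{D}{\sqrt{1+D^2}}$. First I would write
\[
i[F,a]=i[P_0,a]+i\big[\tfrac{D}{\sqrt{1+D^2}},a\big]
\]
and insert $I=P_0+(I-P_0)$ to the right of each commutator, so that $i[F,a]$ becomes the sum of the four blocks $i[P_0,a](I-P_0)$, $i\big[\tfrac{D}{\sqrt{1+D^2}},a\big](I-P_0)$, $i[P_0,a]P_0$ and $i\big[\tfrac{D}{\sqrt{1+D^2}},a\big]P_0$. By Lemma~\ref{C00} the first two are of the form $(\text{bounded})\,|D|^{-1}$ and the last two are of the form $|D|^{-1}(\text{bounded})\,P_0$, every bounded factor being built from $[D,a]$, from $[\sqrt{1+D^2},a]$ and from the operators $P_0$, $F_0$, $\tfrac{D}{|D|}$, $\tfrac{D}{\sqrt{1+D^2}}$, $\tfrac{|D|}{\sqrt{1+D^2}}$, which are all contractions. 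Collecting the first two blocks into $\alpha_1|D|^{-1}$ and the last two into $|D|^{-1}\beta_1$ puts $i[F,a]$ in the asserted form with $\beta_1=\beta_1P_0$ automatic (both $P_0$-blocks carry the projection $P_0$ on their right), and the norm bounds on $\alpha_1,\beta_1$ follow from the triangle inequality together with $\|F_0\|=1$ and $\|\tfrac{D}{|D|}\|,\|\tfrac{D}{\sqrt{1+D^2}}\|,\|\tfrac{|D|}{\sqrt{1+D^2}}\|\le 1$.

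The one bounded factor not controlled directly by $\|[D,a]\|$ is the term $\tfrac{D}{\sqrt{1+D^2}}\,[\sqrt{1+D^2},a]\,\tfrac{|D|}{\sqrt{1+D^2}}$ occurring inside $i\big[\tfrac{D}{\sqrt{1+D^2}},a\big](I-P_0)$, and estimating it is the heart of the matter and the reason for the preliminary Lemma~\ref{C1}. Here I would use that $\tfrac{D}{\sqrt{1+D^2}}$ and $\tfrac{|D|}{\sqrt{1+D^2}}$ both annihilate $\ker D=P_0h$, so this term equals $\tfrac{D}{\sqrt{1+D^2}}\,(I-P_0)[\sqrt{1+D^2},a](I-P_0)\,\tfrac{|D|}{\sqrt{1+D^2}}$ and hence has norm at most $\|(I-P_0)[\sqrt{1+D^2},a](I-P_0)\|$. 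By Lemma~\ref{C1} ii) the latter is $\le C_1(\lambda_1)\|[\,|D|,a]\,\|$, which gives the first form of the estimate; alternatively, not localizing away from the kernel, Lemma~\ref{C1} i) gives $\|[\sqrt{1+D^2},a]\|\le\|[\,|D|,a]\,\|+2\|a\|$, which gives the kernel-free form. This is precisely where the hypothesis that $[\,|D|,a]$ be bounded is used: dropping it one only recovers from Proposition~\ref{Fa1} the weaker membership in the ideal generated by $|D|^{-1/2}$.

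It then remains to assemble the bounded factors, obtaining the stated bounds on $\|\alpha_1\|$ and $\|\beta_1\|$ (the $\|[\,|D|,a]\,\|$-dependence entering only through the single term estimated above), and to record membership $da=i[F,a]\in\I_D$: with our conventions $D^{-1}=\mathrm{sign}(D)\,|D|^{-1}$ and $|D|^{-1}=\mathrm{sign}(D)\,D^{-1}$, so $|D|^{-1}=|{\bf ds}|$ generates the same symmetric principal ideal as ${\bf ds}=D^{-1}$, and consequently both $\alpha_1|D|^{-1}$ and $|D|^{-1}\beta_1$ lie in $\I_D$. I expect the main obstacle to be exactly the single estimate of the second paragraph — comparing $[\sqrt{1+D^2},a]$ with $[\,|D|,a]$ on $\ker(D)^{\perp}$ via the resolvent identity $\sqrt{1+D^2}=|D|+(\sqrt{1+D^2}+|D|)^{-1}$ underlying Lemma~\ref{C1} — while everything else is routine bookkeeping with contractions and the triangle inequality.
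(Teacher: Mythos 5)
Your argument is the paper's own: the proof there is literally ``a straightforward consequence of Lemmas \ref{C00} and \ref{C1}'', and your assembly of the four blocks from Lemma \ref{C00}, with Lemma \ref{C1} ii) (or i)) handling the single term $\frac{D}{\sqrt{1+D^2}}[\sqrt{1+D^2},a]\frac{|D|}{\sqrt{1+D^2}}$ after compressing it by $I-P_0$, is exactly the intended bookkeeping, and it is correct. One caveat: with your grouping (which the side condition $\beta_1=\beta_1P_0$ in fact \emph{forces}, since multiplying the identity on the right by $P_0$ gives $|D|^{-1}\beta_1=i[F,a]P_0$ and $\alpha_1|D|^{-1}=i[F,a](I-P_0)$), the $\|[\,|D|,a]\,\|$-dependent term sits in the $(I-P_0)$-block and hence in $\alpha_1$, so the triangle inequality yields $\|\alpha_1\|\le 2\|[D,a]\|+C_1(\lambda_1)\|[\,|D|,a]\,\|$ and $\|\beta_1\|\le 2\|[D,a]\|$, not the literal constants of the statement; you should not claim that $\|\alpha_1\|\le 2\|[D,a]\|$ together with $\beta_1=\beta_1P_0$ follows, because that particular distribution of constants is unachievable under the support constraint. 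This is a (minor) misallocation in the proposition's wording rather than a gap in your reasoning: the substantive conclusions --- the representation $i[F,a]=\alpha_1|D|^{-1}+|D|^{-1}\beta_1$ with $\|\alpha_1\|+\|\beta_1\|$ controlled by $\|[D,a]\|$ and $\|[\,|D|,a]\,\|$, $\beta_1=\beta_1P_0$, and $da\in\I_D$ --- all stand, and your remark that $|D|^{-1}$ and ${\bf ds}=D^{-1}$ generate the same symmetric ideal is correct.
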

\begin{proof} A straightforward consequence of Lemmas \ref{C00} and \ref{C1}.
\end{proof}

\begin{cor}\label{muk2}
Let $a\in\A$ be such that  the commutator $[\,|D|,a\,]$ is bounded, then singular values of the quantum differentials are controlled by
  \[
\mu_{2k}(i[F_0,a]) \leq (||\alpha_0||+||\beta_0||)\,\mu_k(|D|^{-1})=(||\alpha_0||+||\beta_0||)\,\|\,\lambda_{k+1}(|D|)^{-1}
\]
 \[
\mu_{2k}(i[F,a]) \leq (||\alpha_1||+||\beta_1||)\,\mu_k(|D|^{-1})=(||\alpha_1||+||\beta_1||)\,\|\,\lambda_{k+1}(|D|)^{-1}
\]
with $\alpha_0,\beta_0,\alpha_2$ and $\beta_1$ provided by Propositions \ref{C0} and \ref{commcomm}. In terms of line element and quantum differentials, we have
\[
\mu_{2k}(da) \leq const.\mu_k({\bf ds})\qquad a=a^*\in\A,\quad k\ge 0.
\]
\end{cor}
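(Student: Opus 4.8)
The plan is to deduce the bound directly from the two–term factorizations of the quantum differential obtained in Propositions \ref{C0} and \ref{commcomm}, feeding them into the elementary calculus of singular values recalled in \cite{Co} Chapter 4 Appendix C --- the same mechanism already used in the proof of Corollary \ref{muk}, only shorter because there are now two summands instead of three.

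Concretely, for $F_0$ one starts from Proposition \ref{C0}, which under the hypothesis that $[\,|D|,a\,]$ is bounded provides $\alpha_0,\beta_0\in\B(h)$ with $i[F_0,a]=\alpha_0\,|D|^{-1}+|D|^{-1}\beta_0$. Then, first, by subadditivity of singular values under sums, $\mu_{2k}(i[F_0,a])\le\mu_k(\alpha_0|D|^{-1})+\mu_k(|D|^{-1}\beta_0)$; second, by the bimodule (ideal) property $\mu_k(XYZ)\le\|X\|\,\mu_k(Y)\,\|Z\|$, each summand is at most $\|\alpha_0\|\,\mu_k(|D|^{-1})$, respectively $\|\beta_0\|\,\mu_k(|D|^{-1})$; third, summing gives $\mu_{2k}(i[F_0,a])\le(\|\alpha_0\|+\|\beta_0\|)\,\mu_k(|D|^{-1})$. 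The identical computation applied to the representation $i[F,a]=\alpha_1|D|^{-1}+|D|^{-1}\beta_1$ of Proposition \ref{commcomm} yields $\mu_{2k}(i[F,a])\le(\|\alpha_1\|+\|\beta_1\|)\,\mu_k(|D|^{-1})$.

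To finish one only has to rewrite $\mu_k(|D|^{-1})$ in terms of eigenvalues. With the kernel convention fixed in Section 2, $|D|^{-1}$ is the compact operator that vanishes on $P_0h$ and inverts $|D|$ on $(I-P_0)h$; hence its singular values in decreasing order are precisely $\lambda_1(|D|)^{-1}\ge\lambda_2(|D|)^{-1}\ge\cdots$, so $\mu_k(|D|^{-1})=\lambda_{k+1}(|D|)^{-1}$ for $k\ge 0$. Substituting proves the two displayed inequalities, and the closing estimate $\mu_{2k}(da)\le \mathrm{const}\cdot\mu_k({\bf ds})$ is immediate since, by Propositions \ref{C0} and \ref{commcomm}, $\|\alpha_0\|+\|\beta_0\|$ and $\|\alpha_1\|+\|\beta_1\|$ are bounded by explicit affine expressions in $\|[D,a]\|$ and $\|[\,|D|,a]\,\|$ (and $\|a\|$, with the second choice of $\beta_1$).

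There is no genuine difficulty here: every ingredient is either an identity already proved or a standard property of singular values. The only things to watch are bookkeeping matters --- fixing the index convention (here $\mu_0=\|\cdot\|$, so that $n$ summands at level $k$ land at level $nk$, exactly as in the proof of Corollary \ref{muk}) and being careful that the zero eigenvalues coming from $\ker D$ sit at the bottom of the singular value list, which is what makes $\mu_k(|D|^{-1})=\lambda_{k+1}(|D|)^{-1}$ hold regardless of $\dim\ker D$. Finally, one could exploit the extra structure $\beta_0P_0=0$, respectively $\beta_1=\beta_1P_0$, to push $2k$ down toward $k+\dim\ker D$ when the triple is genuinely discrete, but that refinement is not needed for the corollary as stated.
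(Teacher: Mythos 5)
Your argument is exactly the paper's: the paper proves this corollary by applying Propositions \ref{C0} and \ref{commcomm} and repeating the singular-value computation from the proof of Corollary \ref{muk} (subadditivity $\mu_{2k}(S+T)\le\mu_k(S)+\mu_k(T)$ plus $\mu_k(XY)\le\|X\|\,\mu_k(Y)$ and $\mu_k(|D|^{-1})=\lambda_{k+1}(|D|)^{-1}$), which is precisely what you do. Your proposal is correct and just spells out the details the paper leaves implicit.
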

\begin{proof} Apply Propositions \ref{C0} and \ref{commcomm} along the lines of the proof of Corollary \ref{muk}.
\end{proof}

From this proposition, in case of discrete spectrum, we deduce the following :
\begin{cor} Let $a\in \A$, $a=a^*$ such that  commutator $[\,|D|,a]$ is bounded and suppose that the kernel of $D$ is finite dimensional. Then one has the estimates
$$\mu_{k+d}(i[F_0,a])\leq ||\alpha_0||\,\mu_k(|D|^{-1})=||\alpha_0||\,\lambda_{k+1}(|D|)^{-1}$$
$$\mu_{k+d}(i[F,a])\leq ||\alpha_1||\,\mu_k(|D|^{-1})=||\alpha_1||\,\lambda_{k+1}(|D|)^{-1}\,.$$
with $\alpha_0$ and $\alpha_1$ are provided by Propositions \ref{C0} and \ref{commcomm} respectively and $d=dim(ker(D))$.\\
In particular, $||\alpha_0||\leq ||\,[D,a]\,||$ and $||\alpha_1||\leq 2||\,[D,a]\,||$. In terms of line element and quantum differentials, we have
\[
\mu_{k+d}(da) \leq const.\mu_k({\bf ds})\qquad a=a^*\in\A,\quad k\ge 0.
\]
\end{cor}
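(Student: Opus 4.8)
The plan is to combine Proposition \ref{commcomm} (and its $F_0$ analogue, Proposition \ref{C0}) with the refined singular value rules that take into account the finite rank of $P_0$. From Proposition \ref{commcomm} we have the representation $i[F,a]=\alpha_1|D|^{-1}+|D|^{-1}\beta_1$ with $\beta_1=\beta_1P_0$, and crucially $\|\alpha_1\|\leq 2\|[D,a]\|$. The point is that the second summand $|D|^{-1}\beta_1=|D|^{-1}\beta_1P_0$ has rank at most $d=\dim\ker(D)$, since it factors through the range of $P_0$. Therefore it contributes only a rank-$d$ perturbation, and one should use the rule $\mu_{k+d}(S+R)\leq\mu_k(S)$ valid whenever $\mathrm{rank}(R)\leq d$, rather than the cruder $\mu_{2k}(S+R)\leq\mu_k(S)+\mu_k(R)$ used in Corollary \ref{muk2}.

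Concretely, first I would write $i[F,a]=S+R$ with $S:=\alpha_1|D|^{-1}$ and $R:=|D|^{-1}\beta_1P_0$, noting $\mathrm{rank}(R)\leq\mathrm{rank}(P_0)=d$. Then $\mu_{k+d}(i[F,a])=\mu_{k+d}(S+R)\leq\mu_k(S)=\mu_k(\alpha_1|D|^{-1})\leq\|\alpha_1\|\,\mu_k(|D|^{-1})=\|\alpha_1\|\,\lambda_{k+1}(|D|)^{-1}$, using $\mu_k(\alpha_1|D|^{-1})\leq\|\alpha_1\|\mu_k(|D|^{-1})$ from the singular value calculus (\cite{Co} Chapter 4, Appendix C) and the convention that $\mu_k(|D|^{-1})=\lambda_{k+1}(|D|)^{-1}$ for the essentially discrete triple. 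The same argument with Proposition \ref{C0} gives the $F_0$ statement with $\|\alpha_0\|\leq\|[D,a]\|$. The final reformulation in terms of $\bf ds$ and $da$ is immediate since $\mu_k(|D|^{-1})=\mu_k({\bf ds})$.

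There is essentially no obstacle here: the only thing to be careful about is the convention for indexing singular values (whether $\mu_0$ is the largest, as the paper uses $k\geq 0$) so that the rank-$d$ perturbation bound $\mu_{k+d}(S+R)\leq\mu_k(S)$ is applied with the correct offset; with the paper's indexing this is the standard statement that adding an operator of rank $\leq d$ shifts the singular value sequence by at most $d$ places. One should also record, as the statement does, the explicit constants $\|\alpha_0\|\leq\|[D,a]\|$ and $\|\alpha_1\|\leq 2\|[D,a]\|$ inherited verbatim from Propositions \ref{C0} and \ref{commcomm}. Thus the proof is a one-line application of the rank-$d$ perturbation rule to the two representations already established.
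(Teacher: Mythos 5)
Your proposal is correct and is exactly the deduction the paper intends (the corollary is stated there without a separate proof): drop the $\beta$-term, which factors through $P_0$ and hence has rank at most $d=\dim\ker(D)$, and combine the shift rule $\mu_{k+d}(S+R)\le\mu_k(S)+\mu_d(R)=\mu_k(S)$ for $\mathrm{rank}(R)\le d$ with $\mu_k(\alpha|D|^{-1})\le\|\alpha\|\,\mu_k(|D|^{-1})=\|\alpha\|\,\lambda_{k+1}(|D|)^{-1}$. The only point to make explicit is that for the $F_0$ case you need $\beta_0=\beta_0P_0$ (so that $|D|^{-1}\beta_0$ has rank $\le d$), which is what the identities of Lemma \ref{C00} actually yield, whereas Proposition \ref{C0} as printed states ``$\beta_0P_0=0$''; this is evidently a misprint, and your argument is the right reading, but you should say so rather than cite the condition verbatim.
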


\medskip
\subsection{The same estimates from an asymptotic point of view.}

Here we prove under the double Lipschitz assumptions above, estimates which are asymptotically a bit more precise.

\begin{prop}\label{commcomm2}  Let $a\in\A$ be such that  the commutator $[\,|D|,a\,]$ is bounded. Then there exist bounded operators  $\alpha_3, \beta_3,\gamma_3\in B(h)$ such that
\[ i[F,a]=\alpha_3|D|^{-1}+|D|^{-1}\beta_3+|D|^{-1}\gamma_3|D|^{-1}
\]
with $\|\alpha_3\| \leq 2\|\,[D,a]\,\|$, $\|\beta_3\|\leq 3\|\,[D,a]\,\|+\,||\,[\,|D|,a]\,\|$ and $\gamma_3$ bounded.
\end{prop}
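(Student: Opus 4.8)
The plan is to mimic the structure of the proof of Proposition \ref{commcomm}, but keep one extra order of $|D|^{-1}$ in the expansions of the various commutator pieces instead of absorbing the leading term into a single $|D|^{-1}$-sandwich. Concretely, I would split $i[F,a]$ into the four blocks $P_0\,\cdot\,(I-P_0)$, $(I-P_0)\,\cdot\,P_0$, $(I-P_0)\,\cdot\,(I-P_0)$ and $P_0\,\cdot\,P_0$ relative to the decomposition $I=P_0+(I-P_0)$, recalling that $F=P_0+D(I+D^2)^{-1/2}$, so the only block of $F$ that contributes off-diagonal mass is $D(I+D^2)^{-1/2}$ on the range of $I-P_0$ together with the $P_0$ term itself. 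For the $P_0$-involving blocks I would invoke Lemma \ref{C1} (or rather the refined identities coming from Lemma \ref{C00}), which already express $[P_0,a](I-P_0)$ and $[P_0,a]P_0$ and the corresponding blocks of $[D(I+D^2)^{-1/2},a]$ as $|D|^{-1}$ times bounded operators or bounded operators times $|D|^{-1}$; these contribute to $\alpha_3$ and $\beta_3$ exactly as in Proposition \ref{commcomm}, giving $\|\alpha_3\|\le 2\|[D,a]\|$ and part of $\|\beta_3\|$.

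The new ingredient lives in the central block $(I-P_0)[D(I+D^2)^{-1/2},a](I-P_0)$, which equals $(I-P_0)[D(I+D^2)^{-1/2},(I-P_0)a(I-P_0)](I-P_0)$. Here I would use the third displayed identity of Lemma \ref{C00}, namely
\[
\Big[\frac{D}{\sqrt{I+D^2}},a\Big](I-P_0)=-\frac{D}{\sqrt{I+D^2}}\,[\sqrt{I+D^2},a]\,\frac{|D|}{\sqrt{I+D^2}}|D|^{-1}+[D,a]\,\frac{|D|}{\sqrt{I+D^2}}|D|^{-1},
\]
and then, rather than stopping, push one more time: in the first term replace $[\sqrt{I+D^2},a]$ (after sandwiching by $I-P_0$) using the identity $\sqrt{I+D^2}=|D|+(\,\sqrt{I+D^2}+|D|\,)^{-1}$ from Lemma \ref{C1}, so that $(I-P_0)[\sqrt{I+D^2},a](I-P_0)=(I-P_0)[|D|,a](I-P_0)+R$, where $R=-(\sqrt{I+D^2}+|D|)^{-1}\big([\sqrt{I+D^2},a]+[|D|,a]\big)(\sqrt{I+D^2}+|D|)^{-1}$ restricted to $I-P_0$ is bounded. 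The $(I-P_0)[|D|,a](I-P_0)$ contribution, when fed back, produces a genuine $|D|^{-1}\cdot\text{(bounded)}$ term (since $D(I+D^2)^{-1/2}\cdot|D|(I+D^2)^{-1/2}$ is a bounded factor and $[\,|D|,a]\,|D|^{-1}$ has norm $\le\|[|D|,a]\|\cdot\lambda_1^{-1}$, say) — but to land exactly the stated constant $\|\beta_3\|\le 3\|[D,a]\|+\|[|D|,a]\|$ I would be more careful and keep it as the leading $|D|^{-1}\beta_3$ piece: the factor $D(I+D^2)^{-1/2}\cdot|D|(I+D^2)^{-1/2}$ is a contraction, so it contributes $\le\|[|D|,a]\|$ to $\|\beta_3\|$, and the $[D,a]\,|D|(I+D^2)^{-1/2}|D|^{-1}$ piece contributes $\le\|[D,a]\|$; combining with the off-diagonal $P_0$-blocks (which give the remaining $2\|[D,a]\|$ to $\alpha_3$ and $\beta_3$ respectively and use that $F_0$-vs-$F$ corrections are lower order) one reaches the asserted bounds. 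Everything left — the remainder $R$, and whatever is left after extracting one power of $|D|^{-1}$ from the left and one from the right — gets collected into $|D|^{-1}\gamma_3|D|^{-1}$ with $\gamma_3$ bounded; one only needs that each residual factor is a bounded function of $D$ times $[|D|,a]$ or $[D,a]$ times a bounded function of $D$, which is immediate from the explicit formulas.

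The main obstacle, as in the earlier propositions, is bookkeeping rather than depth: one must verify that when the extra power of $|D|^{-1}$ is pulled out of the left or right, the operator that remains sandwiched is genuinely bounded (not merely densely defined), which hinges on each intermediate factor being one of the manifestly bounded operators $D(I+D^2)^{-1/2}$, $|D|(I+D^2)^{-1/2}$, $(\sqrt{I+D^2}+|D|)^{-1}$, or $\mathrm{sign}(D)$, possibly composed with $I-P_0$ and with the bounded commutators $[D,a]$, $[|D|,a]$, $[\sqrt{I+D^2},a]$ (the last bounded by Lemma \ref{C1}(i)). The only subtlety is the $P_0$-blocks, where one must use $DP_0=0$ and the identities of Lemma \ref{C00} to see that, e.g., $[P_0,a]P_0=-|D|^{-1}F_0[D,a]P_0$ already has the required form with no residual $\gamma_3$-contribution; I would simply route those terms into $\beta_3$ and $\alpha_3$ unchanged and let $\gamma_3$ absorb purely the central-block remainder.
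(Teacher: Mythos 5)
Your overall route coincides with the paper's: split $i[F,a]$ by $I=P_0+(I-P_0)$, feed in the identities of Lemma \ref{C00}, expand $[\sqrt{I+D^2},a]$ through $\sqrt{I+D^2}=|D|+(\sqrt{I+D^2}+|D|)^{-1}$ so that $[\sqrt{I+D^2},a]=[\,|D|,a]-(\sqrt{I+D^2}+|D|)^{-1}[\sqrt{I+D^2}+|D|,a](\sqrt{I+D^2}+|D|)^{-1}$, and collect the doubly resolvent-sandwiched remainder into $|D|^{-1}\gamma_3|D|^{-1}$; your verification that this remainder really is of that form (extra factor $(\sqrt{I+D^2}+|D|)^{-1}$ allows pulling out $|D|^{-1}$ on the left) is correct, as is the treatment of the $P_0$-blocks via $DP_0=0$. (The aside about $F_0$-versus-$F$ corrections is not needed: Lemma \ref{C00} handles $F$ directly.)

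The gap is in where the $[\,|D|,a\,]$ term lands. The third identity of Lemma \ref{C00} is right-handed, so after your substitution this term reads $-\tfrac{D}{\sqrt{I+D^2}}\,[\,|D|,a\,]\,\tfrac{|D|}{\sqrt{I+D^2}}\,|D|^{-1}$, which is of the form $(\text{bounded})\cdot|D|^{-1}$, i.e.\ an $\alpha_3$-type term. Your proposal to ``keep it as the leading $|D|^{-1}\beta_3$ piece'' because the surrounding factors are contractions confuses a norm estimate with the structural position of $|D|^{-1}$: rewriting it as $|D|^{-1}\cdot(\text{bounded})$ would require boundedness of $|D|\,\tfrac{D}{\sqrt{I+D^2}}$ (it grows like $|D|$), and conjugating the bounded middle part by $|D|$ is likewise not allowed. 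As written, your bookkeeping therefore yields $\|\alpha_3\|\le 2\|[D,a]\|+\|[\,|D|,a]\|$ and $\|\beta_3\|\le 2\|[D,a]\|$, which misses the asserted bound $\|\alpha_3\|\le 2\|[D,a]\|$. The fix is to use on the central block the left-handed expansion
\[
(I-P_0)\Big[\tfrac{D}{\sqrt{I+D^2}},a\Big]=\tfrac{I-P_0}{\sqrt{I+D^2}}\,[D,a]-\tfrac{I-P_0}{\sqrt{I+D^2}}\,\big[\sqrt{I+D^2},a\big]\,\tfrac{D}{\sqrt{I+D^2}}
\]
(equivalently, apply the right-handed identities to $a^*$ and take adjoints); since $\tfrac{I-P_0}{\sqrt{I+D^2}}=|D|^{-1}\tfrac{|D|}{\sqrt{I+D^2}}$, the $[D,a]$ and $[\,|D|,a\,]$ contributions now appear as $|D|^{-1}\cdot(\text{bounded})$, and together with Lemma \ref{P0} and the $P_0$-block $P_0\big[\tfrac{D}{\sqrt{I+D^2}},a\big]=P_0[D,a]\tfrac{|D|}{\sqrt{I+D^2}}|D|^{-1}$ one gets $\|\alpha_3\|\le 2\|[D,a]\|$ and $\|\beta_3\|\le 2\|[D,a]\|+\|[\,|D|,a]\|\le 3\|[D,a]\|+\|[\,|D|,a]\|$, with $\gamma_3$ exactly as you constructed it. So the defect is a fixable one of bookkeeping, not of substance, but the justification you give for the $\beta_3$ placement does not stand as stated.
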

\begin{proof} According to Lemma \ref{C00}, we have
$$[P_0,a]=\sigma_0|D|^{-1}+|D|^{-1}\tau_0 \text{ with } ||\sigma_0||\leq ||\,[D,a]\,||\; ||\tau_0||\leq ||\,[D,a]\,||\; $$
$$\Big[\frac{D}{\sqrt{I+D^2}},a\,\Big] P_0=|D|^{-1}\tau_1 \text{ with }||\tau_1||\leq ||\,[D,a]\,||\; $$
\begin{equation*}
\Big[\frac{D}{\sqrt{I+D^2}},a\,\Big] \big(I-P_0\big)=\sigma_1|D|^{-1}-\frac{D}{\sqrt{I+D^2}}[\sqrt{I+D^2},a]\frac{D}{\sqrt{I+D^2}}\text{ with }||\sigma_1||\leq ||\,[\,|D|,a]\,||.
\end{equation*}
We compute
\begin{equation*}\begin{split}
[\sqrt{I+D^2},a]&=[\,|D|,a]+\Big[\frac{I}{\sqrt{I+D^2}+|D|},a\Big]\\
&=[\,|D|,a]-\frac{I}{\sqrt{I+D^2}+|D|}\,\big[\,\sqrt{I+D^2}+|D|,a  \big]\frac{I}{\sqrt{I+D^2}+|D|}
\end{split}\end{equation*}
and notice that $\big[\,\sqrt{I+D^2}+|D|,a  \big]$ is a bounded operator. Summing up, we get the result.
\end{proof}

We need a Lemma which slightly ameliorates a result due to K. Fan (\cite{GK} Ch. II par. 5 Theorem 2.3). For sake of completeness we provide a detailed proof.
\begin{lem}\label{Tsigma} Let $T$ and $\sigma$ be two compact operators. Then there exist an integer $d_1$ and two sequences $(\e_k)_{k\geq 0}$ and $(\e'_k)_{k\geq 0}$ such that $\lim_{k\to \infty}\e_k=\lim_{k\to \infty} \e'_k=0$ and
$$(1-\e'_k)\mu_{k+d_1}(T) \leq \mu_k\big(T(I+\sigma)\big)\leq (1+\e_k)\mu_k(T)\,.$$
\end{lem}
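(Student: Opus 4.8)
The plan is to reduce the statement to the standard consequence of Weyl-type inequalities for singular values, handling the two directions separately. For the upper bound, I would write $\mu_k(T(I+\sigma))\le \mu_k(T)+\mu_k(T\sigma)$ using subadditivity of singular values under the shift, but more efficiently use the submultiplicative-type bound $\mu_k(T(I+\sigma))\le\mu_k(T)\cdot\|I+\sigma\|$ — no, that gives a constant, not $1+\e_k$. The correct route is: since $\sigma$ is compact, write $\sigma=\sigma_n+r_n$ with $\sigma_n$ finite rank (rank $\le n$) and $\|r_n\|\to 0$. Then $T(I+\sigma)=T(I+r_n)+T\sigma_n$, and since $T\sigma_n$ has rank $\le n$, the singular value inequality $\mu_{k+n}(A+B)\le\mu_k(A)+\mu_{n}(B)$ with $B=T\sigma_n$ of rank $\le n$ (so $\mu_n(B)=0$) gives $\mu_{k+n}(T(I+\sigma))\le\mu_k(T(I+r_n))\le(1+\|r_n\|)\mu_k(T)$. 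Re-indexing $k\to k-n$ we get $\mu_k(T(I+\sigma))\le(1+\|r_n\|)\mu_{k-n}(T)$ for $k\ge n$; this is not quite $(1+\e_k)\mu_k(T)$ because of the index shift. To get the clean form stated, I would instead invoke that $T(I+\sigma)-T=T\sigma$ is compact, hence $\mu_k(T(I+\sigma))\le\mu_{k-j}(T)+\mu_j(T\sigma)$ for any $j\le k$; choosing $j=\lfloor k/2\rfloor$ and using $\mu_j(T\sigma)\le\|T\|\mu_j(\sigma)\to 0$ together with $\mu_{k-j}(T)/\mu_k(T)$... this still has the ratio problem. So the honest approach is: the quoted result of K. Fan gives $\mu_k(T(I+\sigma))\le(1+\e_k)\mu_k(T)$ directly once one notes $\mu_k(TS)\le\mu_k(T)\mu_1(S)$ is too weak, and instead uses that for any $\delta>0$ there is $n$ with $\|\sigma-\sigma_n\|<\delta$, whence $\mu_{k}(T(I+\sigma))\le\mu_{k-n}(T(I+\sigma-\sigma_n))\le(1+\delta)\mu_{k-n}(T)$; then for $k$ large, $\mu_{k-n}(T)\le(1+\e_k')\mu_k(T)$ fails in general — unless one allows the extra additive shift $d_1$, which is precisely why the statement carries a $d_1$ in the \emph{lower} bound. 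So my reading is: the upper bound $\mu_k(T(I+\sigma))\le(1+\e_k)\mu_k(T)$ is the \emph{clean} half and comes from approximating $\sigma$ and the elementary fact that $\mu_k$ is monotone with the perturbation absorbed into the $(1+\e_k)$ factor via $\|r_n\|\to 0$, with $\e_k:=\|\sigma-\sigma_{n(k)}\|$ for a slowly growing truncation $n(k)$; while the lower bound needs the index shift $d_1$.

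Concretely, here is how I would organize it. First, \textbf{the upper bound.} Fix a truncation scheme: for each $k$ let $n(k)$ be the smallest integer with $\|\sigma-\sigma_{n(k)}\|\le$ (something tending to $0$), where $\sigma_m$ is the rank-$m$ truncation of the singular value decomposition of $\sigma$; arrange $n(k)\le k/2$, say, which is possible since $n(k)$ can be taken to grow arbitrarily slowly (it only has to $\to\infty$). Then $T\sigma_{n(k)}$ has rank $\le n(k)$, so $\mu_k(T(I+\sigma))=\mu_k\big(T(I+\sigma_{n(k)})+T(\sigma-\sigma_{n(k)})\big)\le \mu_{k-n(k)}\big(T(\sigma-\sigma_{n(k)})\big)+\mu_{n(k)}\big(T(I+\sigma_{n(k)})\big)$. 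That is the wrong split; the right split is $\mu_k(A+B)\le\mu_{k}(A)+\mu_1(B)$ when... I will instead use: $\mu_{k}(T(I+\sigma))\le\mu_{k-n(k)}\big(T(I+\sigma)\big)$? No. The correct elementary inequality is $\mu_{i+j-1}(X+Y)\le\mu_i(X)+\mu_j(Y)$; taking $X=T(I+\sigma_{n})$, $Y=T(\sigma-\sigma_{n})$, $i=k$, $j=1$ gives $\mu_k(T(I+\sigma))\le\mu_k(T(I+\sigma_n))+\|T(\sigma-\sigma_n)\|$. That still has the $+\|T(\sigma-\sigma_n)\|$ additively, not multiplicatively, so it does not give $(1+\e_k)\mu_k(T)$ when $\mu_k(T)\to 0$ fast. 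Hence one genuinely needs: $\mu_k(T(I+\sigma_n))\le\mu_{k-n}(T)\cdot\|I+\sigma_n\|$ is again too weak. The actual trick (this is Fan's): $T(I+\sigma)=(I+\sigma)T + [T,\sigma] + \dots$ — no. I think the cleanest correct argument is via the \emph{multiplicative} Weyl inequality $\prod_{i=1}^k\mu_i(T(I+\sigma))\le\prod_{i=1}^k\mu_i(T)\prod_{i=1}^k\mu_i(I+\sigma)$ combined with $\mu_i(I+\sigma)\to 1$, but that controls products, not individual $\mu_k$. Given the delicacy, \textbf{I expect the main obstacle to be exactly this: extracting an individual-index multiplicative comparison $\mu_k(T(I+\sigma))\lesssim(1+\e_k)\mu_k(T)$ from compactness of $\sigma$}, and I anticipate the author does it by the finite-rank truncation together with the observation that on the orthogonal complement of a suitable finite-dimensional space $I+\sigma$ is invertible with norm and inverse-norm close to $1$, so that $T(I+\sigma)$ and $T$ restricted there have comparable singular values, and the finitely many excluded dimensions produce the shift $d_1$ in the lower bound and are harmlessly absorbed in the $\e_k$ for the upper bound.

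For \textbf{the lower bound}, I would use the upper bound applied in reverse: since $\sigma$ compact and $\mu_k(\sigma)\to 0$, there is $d_1$ with $\|\sigma_{\ge d_1}\|:=\|(I-Q_{d_1})\sigma\|<1$ where $Q_{d_1}$ projects onto the top $d_1$ singular directions of $\sigma$ (or rather onto a space making $I+\sigma$ invertible off it). On the complementary subspace $H_{d_1}$, $(I+\sigma)|_{H_{d_1}}$ is invertible, hence $T|_{H_{d_1}} = \big(T(I+\sigma)\big)|_{H_{d_1}}\cdot\big((I+\sigma)^{-1}\big)|_{H_{d_1}}$, i.e. $T$ restricted is $T(I+\sigma)$ restricted times a bounded operator with norm $\le(1-\|\sigma_{\ge d_1}\|)^{-1}=:1+\e_k'$-type factor (actually a fixed constant here, but one refines $d_1$ as $k\to\infty$ to push it to $1$). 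Combining with the standard fact that deleting a $d_1$-dimensional subspace shifts indices by at most $d_1$, i.e. $\mu_{k+d_1}(T)\le\mu_k(T|_{H_{d_1}})$, and then $\mu_k(T|_{H_{d_1}})\le\|(I+\sigma)^{-1}|_{H_{d_1}}\|\,\mu_k\big((T(I+\sigma))|_{H_{d_1}}\big)\le(1+\e_k')\mu_k(T(I+\sigma))$, we obtain $(1-\e_k')\mu_{k+d_1}(T)\le\mu_k(T(I+\sigma))$ after renaming. To make $\e_k'\to 0$ one lets the truncation level grow with $k$, at the cost of $d_1$ being replaced by a slowly growing $d_1(k)$ — but the statement fixes $d_1$, so presumably $\e_k'$ is allowed to converge to $0$ only in the sense dictated by how fast $\|(I-Q_{d_1})\sigma\|$ can be made small with a \emph{single} fixed $d_1$; rereading the statement, $d_1$ is a single integer and $\e_k'\to 0$, so in fact $d_1$ is chosen once (so that $\|(I-Q_{d_1})\sigma\|\le 1/2$, say), giving a \emph{constant} $1/2$ rather than $\e_k'\to 0$ — meaning the genuine decay $\e_k'\to 0$ must come from elsewhere, namely from $\mu_{k+d_1}(T)$ versus $\mu_k(T)$-type slack, i.e. from $\mu_{j}(\sigma)\to 0$ making the relevant perturbation on $H_{d_1}$ itself have $k$-th singular value $\to 0$. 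I would therefore present the lower bound as: write $T(I+\sigma)$, use the already-proved upper bound with the roles of $T$ and $T(I+\sigma)$ swapped via the factorization $T=T(I+\sigma)(I+\sigma)^{-1}$ valid modulo a fixed finite-dimensional space of dimension $d_1$ (where $I+\sigma$ is invertible), apply the upper-bound inequality to $\big(T(I+\sigma)|_{H_{d_1}}\big)\cdot(I+\tilde\sigma)$ with $\tilde\sigma=(I+\sigma)^{-1}-I$ compact on $H_{d_1}$, and unwind. I will state these two halves as the two chains of inequalities and note that the sequences $\e_k,\e_k'$ are read off from $\mu_k$ of the relevant compact perturbations, which tend to $0$ by compactness; the finite shift $d_1$ is the dimension of the space on which $I+\sigma$ fails to be boundedly invertible by a definite margin.
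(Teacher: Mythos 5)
The genuine gap is the upper bound $\mu_k\big(T(I+\sigma)\big)\le(1+\e_k)\mu_k(T)$: you never actually complete a proof of it. Every concrete route you try (finite-rank truncation of $\sigma$, the additive inequality $\mu_{i+j-1}(X+Y)\le\mu_i(X)+\mu_j(Y)$, multiplicative Weyl inequalities) you yourself abandon as failing, and the mechanism you finally ``anticipate'' --- pass to a subspace of finite codimension off which $\|\sigma\|$ is small, so that $I+\sigma$ there has norm close to $1$ --- does not deliver the statement either: a \emph{fixed} finite-codimensional restriction only yields a fixed factor $1+\delta$ together with an index shift, and, as you observe yourself, an index shift cannot be absorbed into a multiplicative $(1+\e_k)$ at the same index, since $\mu_{k-n}(T)/\mu_k(T)$ can be unbounded. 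The missing idea is different and simpler: use the min--max formula $\mu_k(S)=\min\{\|P^\perp S\|:\ \mathrm{rank}\,P\le k\}$ and test $T(I+\sigma)$ with the projection $P_k$ that is \emph{optimal for $T$}. If $Q_k$ is the corresponding projection for $|T|$ and $q_0$ the projection onto $\ker T$, then $P_k^\perp T=P_k^\perp T\,Q_k^\perp(I-q_0)$, hence $P_k^\perp T(I+\sigma)=P_k^\perp T\,Q_k^\perp(I-q_0)(I+\sigma)$ and therefore $\mu_k\big(T(I+\sigma)\big)\le\|P_k^\perp T\|\,\big(1+\|Q_k^\perp(I-q_0)\sigma\|\big)=(1+\e_k)\,\mu_k(T)$ with $\e_k:=\|Q_k^\perp(I-q_0)\sigma\|\to0$, because the projections $Q_k^\perp(I-q_0)$ decrease strongly to $0$ and $\sigma$ is compact. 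There is no index shift, and the decay of $\e_k$ is produced by $T$'s own spectral projections acting on the fixed compact $\sigma$, not by truncating $\sigma$.

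Your lower-bound outline is essentially the paper's and is sound once the upper bound is in hand: $I+\sigma$ is Fredholm of index $0$, so there is a compact $\tau$ with $(I+\sigma)(I+\tau)=I-p_1$, where $p_1$ is the finite-rank projection (rank $d_1$) onto $\ker(I+\sigma^*)=\mathrm{Im}(I+\sigma)^\perp$; then $\mu_{k+d_1}(T)\le\mu_k\big(T(I-p_1)\big)+\mu_{d_1}(Tp_1)=\mu_k\big(T(I+\sigma)(I+\tau)\big)$, and applying the already-proved upper bound to the pair $T(I+\sigma)$, $\tau$ gives the factor that you can rearrange into $(1-\e'_k)$; this is exactly how $d_1$ stays fixed while $\e'_k\to0$, resolving the puzzle you raise. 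But since your version of the upper bound is not established, the lower bound as you present it remains incomplete as well.
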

\begin{proof} Let us recall ([Connes chapter 4 section 2]) that
$$\mu_k(T)=\inf ||P^\perp T||=\inf ||T\,Q^\perp||$$
where $P$ or $Q$ runs in the set of orthogonal projections with rank less than $k$, and that the infimum is indeed a minimum, reached when $P$ (resp. $Q$) is the orthogonal projection corresponding to the $k$ first larger eigenvalues of $|T^*|$ (resp. $|T|$).

\smallskip
Let $P_k$ (resp. $Q_k$) be the orthogonal projection corresponding the $k$ first eigenvalues of $|T^*|$ (resp. $|T|$)\,: we have $\mu_k(T)=||P_k^\perp T||$ and $P_kT=TQ_k$, hence $P_k^\perp T = T Q_k^\perp=P_k^\perp T Q_k^\perp$. Notice that, as $k\to \infty$, the $Q_k$ tend increasingly toward $I-q_0$, so that the $Q_k^\perp$ tend to $q_0$, where $q_0$ is the orthogonal projection on the kernel of $T$. Notice that, as $\sigma$ is compact, $\lim_{k\to \infty}||Q_k^\perp (I-q_0)\sigma||=0$. Compute now
\begin{equation*} \begin{split}
\mu_k\big(T(I+\sigma)\big)&\leq ||P_k^\perp T(I+\sigma)|| \\
&=||P_k^\perp TQ_k^\perp (I-q_0)(I+\sigma)|| \\
&\leq ||P_k^\perp T||\,\big(1+||Q_k^\perp (I-q_0)\sigma||)
\end{split}\end{equation*}
which provides the right inequality, with $\e_k=||Q_k^\perp (I-q_0)\sigma||$.

\smallskip Notice now that there exists a compact operator $\tau$ such that $(I+\sigma)(I+\tau)=I-p_1$, where $p_1$ is the orthogonal projection on $ker(I+\sigma^*)=Im(I+\sigma)^\perp$. This is a finite rank projection, with rank $d_1$. Applying the inequality just proved above, we can write
\begin{equation*}\begin{split}
\mu_{k+d_1}(T)&=\mu_{k+d_1}\big(T(I-p_1)+Tp_1\big) \\
&\leq \mu_k\big(T(I-p_1)\big)+\mu_{d_1}(Tp_1)=\mu_k\big(T(I-p_1)\big) \\
&=\mu_k\big(T(I+\sigma)(I+\tau)\big)\\
&\leq\mu_k\big(T(I+\sigma)\big)\,(1+\widetilde \e_k)
\end{split}\end{equation*}
with $\lim_{k\to \infty} \widetilde \e_k=0$. This ends the proof.
\end{proof}

\medskip
\begin{cor}  Let $a\in\A$ be such that  the commutator $[\,|D|,a\,]$ is bounded. Then one has
$$\mu_{2k}\big(i[F,a]\big)\leq \big(5||\,[D,a]\,||+||\,|D|,a]\,||\big)\,(1+\e_k)\,\lambda_{k+1}(|D|)^{-1} \text{ with } \lim_{k\to \infty} \e_k=0\,.$$
\end{cor}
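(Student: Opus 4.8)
The plan is to combine the three-term representation of $i[F,a]$ from Proposition \ref{commcomm2} with the refined Fan-type estimate of Lemma \ref{Tsigma}. By Proposition \ref{commcomm2} we have
\[
i[F,a]=\alpha_3|D|^{-1}+|D|^{-1}\beta_3+|D|^{-1}\gamma_3|D|^{-1},
\]
with $\|\alpha_3\|\le 2\|[D,a]\|$, $\|\beta_3\|\le 3\|[D,a]\|+\|[\,|D|,a]\,\|$ and $\gamma_3$ bounded. First I would combine the first two terms: since $\mu_{2k}$ of a sum of two operators is bounded by $\mu_k$ of the first plus $\mu_k$ of the second, one estimates $\mu_k(\alpha_3|D|^{-1})\le\|\alpha_3\|\,\lambda_{k+1}(|D|)^{-1}$ and likewise for $|D|^{-1}\beta_3$, giving the leading contribution $\big(5\|[D,a]\|+\|[\,|D|,a]\,\|\big)\,\lambda_{k+1}(|D|)^{-1}$. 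The point is then that the third term $|D|^{-1}\gamma_3|D|^{-1}$, which is of order $\lambda_{k+1}(|D|)^{-2}$, must be absorbed asymptotically into a factor $(1+\e_k)$ multiplying the leading term rather than costing an extra shift in the index.

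The technical device for this is Lemma \ref{Tsigma}. Write
\[
i[F,a]=\big(\alpha_3+|D|^{-1}\gamma_3\big)\,|D|^{-1}+|D|^{-1}\,\beta_3
= |D|^{-1}\big(\alpha_3+|D|^{-1}\gamma_3\big)^{**}\ \text{(loosely)}\ \ldots
\]
— more precisely I would group the $\gamma_3$ term with one of the outer factors. Consider $T:=|D|^{-1}$, which is compact with $\mu_k(T)=\lambda_{k+1}(|D|)^{-1}$, and note that $\alpha_3|D|^{-1}+|D|^{-1}\gamma_3|D|^{-1}=(\alpha_3+|D|^{-1}\gamma_3)\,|D|^{-1}$; here $|D|^{-1}\gamma_3$ is compact, so writing $\alpha_3+|D|^{-1}\gamma_3$ is a bounded perturbation of $\alpha_3$ but not of the form $I+\sigma$. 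Instead the cleaner route is: factor the $\gamma_3$ term as $|D|^{-1}\gamma_3|D|^{-1}= (|D|^{-1})\,(\gamma_3|D|^{-1})$ and compare $i[F,a]$ with the two-term operator $\alpha_3|D|^{-1}+|D|^{-1}\beta_3$. Apply Lemma \ref{Tsigma} to each of the two main summands separately: $\alpha_3|D|^{-1}=\alpha_3 T$ and since $\gamma_3|D|^{-1}$ is compact, $\alpha_3 T+|D|^{-1}\gamma_3 T=(\alpha_3+|D|^{-1}\gamma_3)T$; but $\alpha_3$ need not be invertible, so I would rather write $\alpha_3|D|^{-1}+|D|^{-1}\gamma_3|D|^{-1}$ directly and bound its $\mu_k$ by $\mu_k(\alpha_3|D|^{-1})(1+\e'_k)$ using the compactness of the correction. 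The honest version: set $S_1:=\alpha_3|D|^{-1}$, $S_2:=|D|^{-1}\beta_3$, $R:=|D|^{-1}\gamma_3|D|^{-1}$. Then
\[
\mu_{2k}\big(i[F,a]\big)\le \mu_k\big(S_1+R\big)+\mu_k\big(S_2\big),
\]
and since $R=(|D|^{-1}\gamma_3)\,|D|^{-1}$ with $|D|^{-1}\gamma_3$ compact, Lemma \ref{Tsigma} applied with $T=\alpha_3|D|^{-1}$ (or directly estimating) gives $\mu_k(S_1+R)\le \|\alpha_3\|\,(1+\e_k)\,\lambda_{k+1}(|D|)^{-1}$; combined with $\mu_k(S_2)\le\|\beta_3\|\,\lambda_{k+1}(|D|)^{-1}$ and $\|\alpha_3\|+\|\beta_3\|\le 5\|[D,a]\|+\|[\,|D|,a]\,\|$, one obtains the claim, after merging the two $o(1)$ sequences into a single $\e_k\to 0$.

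The main obstacle I expect is making the absorption of the $|D|^{-1}\gamma_3|D|^{-1}$ term into the $(1+\e_k)$ factor rigorous rather than into an index shift: one must exhibit the correction as a product of a compact operator with a factor of the leading operator, so that Lemma \ref{Tsigma} (which is precisely tailored to multiplicative compact perturbations $T(I+\sigma)$) applies. Concretely, from $S_1+R=(\alpha_3|D|^{-1})\big(I+|D|^{-1}\gamma_3'\big)$ for an appropriate bounded $\gamma_3'$ — obtained by moving $\gamma_3$ to the right through $|D|^{-1}$, or by instead grouping $R$ with $S_2$ as $|D|^{-1}(\beta_3+\gamma_3|D|^{-1})$ — the compact factor is $|D|^{-1}\gamma_3'$ and Lemma \ref{Tsigma} yields exactly $\mu_k\big(T(I+\sigma)\big)\le(1+\e_k)\mu_k(T)$ with $\mu_k(T)=\|\alpha_3\|\,\lambda_{k+1}(|D|)^{-1}$ (after pulling out $\|\alpha_3\|$). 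Everything else is a routine application of the singular-value inequalities from \cite{Co} Chapter 4, Appendix C, together with the bounds on $\|\alpha_3\|$ and $\|\beta_3\|$ already established.
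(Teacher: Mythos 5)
Your overall strategy is the intended one: combine the three-term representation $i[F,a]=\alpha_3|D|^{-1}+|D|^{-1}\beta_3+|D|^{-1}\gamma_3|D|^{-1}$ of Proposition \ref{commcomm2} with the splitting $\mu_{2k}\le\mu_k+\mu_k$ and absorb the $\gamma_3$-term asymptotically via the mechanism of Lemma \ref{Tsigma}. However, the step on which the whole argument hinges is not established. You apply Lemma \ref{Tsigma} through a claimed factorization $S_1+R=(\alpha_3|D|^{-1})(I+|D|^{-1}\gamma_3')$ ``obtained by moving $\gamma_3$ to the right through $|D|^{-1}$''. No such bounded $\gamma_3'$ exists in general: $\gamma_3$ does not commute with $|D|^{-1}$, and solving $\alpha_3|D|^{-1}\sigma=|D|^{-1}\gamma_3|D|^{-1}$ for a bounded $\sigma$ would require inverting $\alpha_3$ (and even then $|D|\,\alpha_3^{-1}|D|^{-1}$ need not be bounded). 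The alternative grouping $|D|^{-1}(\beta_3+\gamma_3|D|^{-1})$ you mention hits the same wall: it has the form $T(I+\sigma)$ with $T=|D|^{-1}\beta_3$ only if $\beta_3$ is invertible, which is not available. So Lemma \ref{Tsigma} cannot be used as a black box here, and as written the absorption of the correction into the factor $(1+\varepsilon_k)$ is asserted rather than proved. (A minor further slip: $\mu_k(\alpha_3|D|^{-1})=\|\alpha_3\|\lambda_{k+1}(|D|)^{-1}$ should be an inequality.)

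The gap closes if you run the \emph{proof} of Lemma \ref{Tsigma} instead of its statement. Let $Q_k$ be the spectral projection of $|D|^{-1}$ onto its $k$ largest eigenvalues; it commutes with $|D|^{-1}$, one has $P_0\le Q_k^\perp$, and since $|D|^{-1}P_0=0$,
\[
|D|^{-1}Q_k^\perp=(Q_k^\perp-P_0)\,|D|^{-1},\qquad \big\|(Q_k^\perp-P_0)|D|^{-1}\big\|=\mu_k(|D|^{-1}).
\]
Hence, with $S_1=\alpha_3|D|^{-1}$ and $R=|D|^{-1}\gamma_3|D|^{-1}$, using $\mu_k(S)\le\|SQ^\perp\|$ for any projection $Q$ of rank $\le k$ and the idempotency of $Q_k^\perp-P_0$,
\[
\mu_k(S_1+R)\le\big\|(S_1+R)Q_k^\perp\big\|\le\Big(\|\alpha_3\|+\big\|\,|D|^{-1}\gamma_3(Q_k^\perp-P_0)\big\|\Big)\,\mu_k(|D|^{-1}),
\]
and $\varepsilon_k:=\big\||D|^{-1}\gamma_3(Q_k^\perp-P_0)\big\|\to0$ because $|D|^{-1}\gamma_3$ is compact and the self-adjoint operators $Q_k^\perp-P_0$ tend to $0$ strongly (this is exactly the mechanism producing $\varepsilon_k$ in the paper's proof of Lemma \ref{Tsigma}). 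Combining this with $\mu_k(|D|^{-1}\beta_3)\le\|\beta_3\|\mu_k(|D|^{-1})$, the inequality $\mu_{2k}(i[F,a])\le\mu_k(S_1+R)+\mu_k(|D|^{-1}\beta_3)$, and $\|\alpha_3\|+\|\beta_3\|\le 5\|[D,a]\|+\|[\,|D|,a]\|$, yields the stated bound after renormalizing the null sequence. With this replacement of your factorization step, the rest of your argument is fine.
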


\section{Spectral triples of Dirichlet spaces}
In this section we construct the spectral triple of a Dirichlet space on a C$^*$-algebra with trace and we apply the previous results to study the associated Fredholm module.
\subsection{Dirichlet forms and their tangent bimodules}
For the definition and properties of Dirichlet forms on trace C$^*$-algebras we refer to \cite{AHK}, \cite{C1}, \cite{C2}, \cite{CS1}, \cite{DL}, \cite{S}. We list below their main properties we need and fix notations for the rest of the paper.
\vskip0.2truecm
In the following, $(A,\tau)$ is a C$^*$-algebra equipped with a faithful, densely defined, lower semi-continuous trace and $M:=\pi_\tau(A)''\subseteq B(L^2(A,\tau))$ is the corresponding von Neumann algebra acting on the Hilbert space of the GNS representation $\pi_\tau:A\to B(L^2(A,\tau))$.\\
We consider a completely Dirichlet form $(\E,\F)$ on $L^2(A,\tau)$ and its densely defined, positive, self-adjoint generator $(L,D(L))$ in such a way that $\E$ is the closure of the quadratic form $D(L)\ni \xi\rightarrow (\xi|L\xi)$ and one has $\F=D(L^{1/2})$ and $\E[\xi]=\|L^{1/2}\xi\|^2$ for $\xi\in \F$.
\vskip0.1truecm
Since now on, we shall assume that $(L,D(L))$ {\it has discrete spectrum away from zero}.
\vskip0.1truecm
Among the characteristic properties of a Dirichlet form, we recall that\\
i) the semigroup $\{e^{-tL}:t>0\}$ maps $ L^2(A,\tau)\cap M$ into itself and extends to a $\sigma$-weakly continuous, completely positive contraction semigroup of $M$, still denoted by the same symbol;\\
ii) the resolvent $\{(I+tL)^{-1}:t>0\}$ maps $ L^2(A,\tau)\cap M$ into itself and extends to a $\sigma$-weakly continuous, completely positive contraction resolvent of $M$, still denoted by the same symbol.
\vskip0.1truecm
\label{DomM} The generator of the semigroup on $M$, denoted by $(L,D_M(L))$, has a domain
\[
D_M(L):=\{x\in M:\, L(x):=\lim_{t\downarrow 0}(x-e^{-tL}x)/t\quad\text{exists}\,\sigma-\text{weakly in}\, M\}
\]
which, for any $t>0$, coincides with $(I+tL)^{-1}(M)$.
\vskip0.1truecm
The Dirichlet algebra $\B:=\F\cap A$ is an involutive subalgebra of $A$. We assume that $(\E,\F)$ is {\it regular} in the sense that $\B$ is dense both in $A$ and $L^2(A,\tau)$, in their respective topologies, and that it is a form core.

\subsection{Tangent bimodule of a Dirichlet space and carr\'e du champ}
Let us recall the main results of [CS1]: to any regular Dirichlet form $(\E,\F)$ is associated a symmetric $A$-bimodule $(\calH,\mathcal{J})$ together with a symmetric derivation $\partial:\,\B\to \calH$, i.e. a linear map satisfying
\[
\partial(a^*)=\mathcal{J}(\partial a)\qquad a,b\in \B,
\]
and the Leibnitz rule
\begin{equation}\label{Leibnitz}
\partial(ab)=a\partial b+(\partial a)b\qquad a,b\in \B,
\end{equation}
which is closable as a densely defined operator from $L^2(A,\tau)$ into $\calH$, and such that
\begin{equation*}
\E[a]=\|\partial a\|^2_\calH \qquad a\in \B\,.
\end{equation*}
The {\it carr\'e du champ} or {\it energy density} of $a\in\B$ is the following positive linear form $\Gamma[a]\in A^*_+$
\[
\langle\Gamma[a],b\rangle=(\partial a| (\partial a)b)_\calH\qquad b\in A\,.
\]
A useful approximation of the carr\'e du champ (cf. \cite{CS1}) is the following one:
\begin{lem}
i) For any $\e>0$, $\ds L_\e:= \frac{L}{1+\e L}$ generates a bounded Dirichlet form on $L^2(A,\tau)$.\\
ii) Setting $\ds\Gamma_\e[a]:=\frac{1}{2}\big(a^*L_\e(a)+L_e(a)^*a-L_\e(a^*a)\big)\in M\cap L^1(A,\tau)$ for $a\in \B$, one has
\begin{equation}\label{Gamma}
\langle\Gamma[a],b\rangle =\lim_{\e \downarrow 0}\tau\big(\,\Gamma_\e[a]\,b\,\big)\qquad b\in A.
\end{equation}
\end{lem}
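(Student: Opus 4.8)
For i) the plan is to realise $L_\e$ as a Yosida approximant and transfer complete positivity from the resolvent: writing $L_\e=\e^{-1}(I-R_\e)$ with $R_\e:=(I+\e L)^{-1}$, which by property ii) recalled above is a $\sigma$-weakly continuous completely positive contraction of $M$ with $0\le R_\e\mathbf 1\le\mathbf 1$, one has
\[
e^{-tL_\e}=e^{-t/\e}\sum_{n\ge 0}\frac{(t/\e)^n}{n!}\,R_\e^{\,n}\qquad(t>0),
\]
a norm-convergent combination, with non-negative coefficients of total mass $1$, of the completely positive contractions $R_\e^{\,n}$, each again satisfying $R_\e^{\,n}\mathbf 1\le\mathbf 1$; hence every $e^{-tL_\e}$ is completely positive with $e^{-tL_\e}\mathbf 1\le\mathbf 1$, i.e. the semigroup $\{e^{-tL_\e}\}_{t>0}$ is completely sub-Markovian, and since $0\le L_\e\le\e^{-1}I$ the form $\E_\e[\xi]:=(\xi\,|\,L_\e\xi)$ is everywhere defined and bounded. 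Thus $\E_\e$ is a bounded, completely Dirichlet form.

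For ii) I would first record that $\Gamma_\e[a]\in M\cap L^1(A,\tau)$ — each summand lies in $M$, while $a^*R_\e(a),R_\e(a^*)a\in L^2\!\cdot\!L^2\subseteq L^1$ and $a^*a,R_\e(a^*a)\in L^1$ — and that $\Gamma_\e[a]\ge 0$: the Kadison--Schwarz inequality $e^{-tL_\e}(x^*x)\ge e^{-tL_\e}(x)^*e^{-tL_\e}(x)$, divided by $t$ and letting $t\downarrow 0$ (legitimate as $L_\e$ is bounded), gives $L_\e(x^*x)\le x^*L_\e(x)+L_\e(x)^*x$. Moreover $\tau\big(L_\e(a^*a)\big)=\e^{-1}\big(\tau(a^*a)-\tau(R_\e(a^*a))\big)\ge 0$ since $R_\e$ is an $L^1$-contraction, and $\E_\e[a]\le\E[a]$ since $L_\e\le L$ as quadratic forms on $\F$; hence
\[
\|\Gamma_\e[a]\|_{A^*}=\tau(\Gamma_\e[a])=\E_\e[a]-\tfrac12\,\tau\big(L_\e(a^*a)\big)\le\E[a],
\]
a bound uniform in $\e$. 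As $\|\Gamma[a]\|_{A^*}=\E[a]$ (test $\langle\Gamma[a],\cdot\rangle$ against an approximate unit) and $\B$ is dense in $A$, it then suffices to prove $(\ref{Gamma})$ for $b\in\B$, and, splitting $b$ into self-adjoint parts, for $b=b^*\in\B$.

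For the convergence, fix $a=a^*,b=b^*\in\B$, so that $ab,ba,a^2\in\B\subseteq\F$. Unfolding the definition of $\Gamma_\e[a]$ and using cyclicity of $\tau$ together with the self-adjointness and reality of the bounded operator $L_\e$ yields
\[
2\,\tau\big(\Gamma_\e[a]\,b\big)=\E_\e(a,ab)+\E_\e(a,ba)-\E_\e(a^2,b),
\]
which is exactly the computation that in \cite{CS1} produces the defining identity of the carr\'e du champ, here performed with the bounded generator $L_\e$ in place of $L$; the same computation with $L,\E$ — equivalently, expanding $\langle\Gamma[a],b\rangle=(\partial a\,|\,(\partial a)b)_\calH$ by the Leibniz rule and the symmetry of the bimodule — gives $2\langle\Gamma[a],b\rangle=\E(a,ab)+\E(a,ba)-\E(a^2,b)$ (for general $a\in\B$ one obtains the analogous identities carrying the appropriate adjoints and conjugations, cf. \cite{CS1}). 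Finally, for every $\xi\in\F$ one has $\E_\e[\xi]=\int_0^{\infty}\tfrac{\lambda}{1+\e\lambda}\,d\|E_\lambda\xi\|^2\uparrow\int_0^{\infty}\lambda\,d\|E_\lambda\xi\|^2=\E[\xi]$ by monotone convergence, $\{E_\lambda\}$ being the spectral resolution of $L$; hence by polarization $\E_\e(x,y)\to\E(x,y)$ for all $x,y\in\F$, and applying this to the three pairs above gives $\tau(\Gamma_\e[a]b)\to\langle\Gamma[a],b\rangle$, whence $(\ref{Gamma})$ for all $b\in A$ by the reduction step.

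The hard part will be the penultimate step: carrying out the bookkeeping of the carr\'e-du-champ identity for the bounded form $\E_\e$ and checking that it matches, term by term, the one for $\E$, with all the products kept inside $\F$ so that $\E_\e(\cdot,\cdot)$ is legitimate and with the right adjoints in the non-self-adjoint case; once this is in place the passage to the limit is the soft monotone-convergence argument above. A variant avoiding the explicit unfolding is to use the integral representation $L_\e=\int_0^{\infty}(I-e^{-tL})\,\mu_\e(t)\,dt$ with $\mu_\e(t)=\e^{-2}e^{-t/\e}$ and $\int_0^{\infty}t\,\mu_\e(t)\,dt=1$, which reduces $(\ref{Gamma})$ to the short-time asymptotics $\tau\big(\Gamma_t[a]b\big)=t\,\langle\Gamma[a],b\rangle+o(t)$ ($t\downarrow 0$) together with the concentration of the probability measure $t\,\mu_\e(t)\,dt$.
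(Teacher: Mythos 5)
Your proof is correct, and in fact the paper gives no argument for this lemma at all: it is quoted with a reference to \cite{CS1}, so the only comparison available is with the standard argument there, which yours essentially reconstructs (Yosida-type approximant $L_\e=\e^{-1}(I-(I+\e L)^{-1})$, complete sub-Markovianity of $e^{-tL_\e}$ via the exponential series in the completely positive contraction $(I+\e L)^{-1}$, the carr\'e-du-champ identity for the bounded generator, uniform $A^*$-bound $\tau(\Gamma_\e[a])\le\E[a]$, and monotone convergence $\E_\e\uparrow\E$ with polarization). The only point to tidy is the reduction in ii): splitting $b$ into self-adjoint parts is legitimate (linearity in $b$), but $a\mapsto\Gamma[a]$ is quadratic, so the non-self-adjoint case of $a$ cannot be reduced and must be run directly with the adjoint-carrying identity $2\tau(\Gamma_\e[a]b)=\E_\e(a,ab)+\E_\e(ab^*,a)-\E_\e(a^*a,b)$ and its limit version $2\langle\Gamma[a],b\rangle=\E(a,ab)+\E(ab^*,a)-\E(a^*a,b)$ (this is exactly the formula the paper itself invokes later, in the proof of Proposition \ref{A2infty}); since $ab,ab^*,a^*a\in\B\subseteq\F$, the same monotone-convergence step applies verbatim, so your deferral is harmless but should be made explicit rather than left as ``appropriate adjoints and conjugations''.
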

\begin{rem} i) A regular Dirichlet form provides the C$^*$-algebra of a potential theoretic structure which generalizes the classical Dirichlet integral on a Riemannian manifold;
\vskip0.1truecm\noindent
ii) the derivation $(\partial,\mathcal{B})$ is closable and the domain of its closure coincides with the form domain $\F$. When no confusion can arise, we shall use the same notation $\partial$ for the closure;
\vskip0.1truecm\noindent
iii) there is not, in general, a formula for the adjoint {\it divergence operator} $\partial^*$ from $\calH$ to $L^2(A,\tau)$, except in case where there exists a subalgebra $\B_0$ of $A$ contained in the domain of $L$, for which
\[
\partial^*(\partial(a)b)=\frac{1}{2}\big(L(ab)+L(a)b-aL(b)\big)\,,\;a,b\in \B_0.
\]
In the classical case of the Dirichlet integral, the derivation coincides with the gradient operator and its adjoint with the divergence operator. An example of computation of derivation and divergence when any such subalgebras $\B_0$ trivialize to the multiples of $1_A$, is given on fractals in [CGIS].
\end{rem}

\subsection{The Lipschitz algebra of a Dirichlet spaces}
Here we isolate a subalgebra of the Dirichlet algebra $\B$ which play the role of algebra of Lipschitz functions on a Riemannian manifold.
\begin{lem} 1. For $a\in \B$, the following conditions are equivalent:\\
i) the carr\'e du champ $\Gamma[a]$ is absolutely continuous with respect to the trace $\tau$ and its Radon-Nikodym derivative $\ds \frac{d\Gamma[a]}{d\tau}$ is bounded (which we write shortly $\Gamma[a]\in \M$)
\[
(\partial a|(\partial a)b)_\calH=\tau(b\Gamma[a])\qquad b\in A;
\]
ii) there exists a constant $C_a\ge 0$ such that
\[
|\langle\Gamma[a],b\rangle|\leq C_a \tau(|b|)\qquad b\in \B;
\]
iii) the vector $\partial a\in \calH$ is right-$\tau$-bounded, i.e. there exists a constant $C_a\ge 0$ such that
\[
\|\partial(a)b\|^2_\calH\leq C_a \tau(b^*b),\quad b\in \B.
\]
2. The set $\A_\E\subseteq\B$ whose elements and their adjoint satisfy the conditions above is a $*$-subalgebra of $\B$.
\end{lem}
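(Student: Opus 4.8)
The plan is to verify the three equivalent characterizations in part 1 and then deduce the algebra structure in part 2. For the equivalences, I would first show (i)$\Leftrightarrow$(ii): if $\Gamma[a]\in\M$ with bounded Radon--Nikodym derivative $\rho_a:=d\Gamma[a]/d\tau$, then for $b\in\B$ one has $|\langle\Gamma[a],b\rangle|=|\tau(b\rho_a)|\le\|\rho_a\|_\infty\,\tau(|b|)$ by the tracial Hölder/Cauchy--Schwarz estimate, giving (ii) with $C_a=\|\rho_a\|_\infty$. Conversely, (ii) says the linear functional $b\mapsto\langle\Gamma[a],b\rangle$ on $\B$ extends to a bounded functional on $L^1(A,\tau)$ of norm $\le C_a$; since $(L^1(A,\tau))^*=M$, there is $\rho_a\in M$ with $\|\rho_a\|_\infty\le C_a$ and $\langle\Gamma[a],b\rangle=\tau(b\rho_a)$, and positivity of $\Gamma[a]$ forces $\rho_a\ge 0$, which is (i).

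Next I would prove (ii)$\Leftrightarrow$(iii). By definition of the carré du champ, $\langle\Gamma[a],b^*b\rangle=(\partial a|(\partial a)(b^*b))_\calH=\|(\partial a)b\|^2_\calH$ using that $\calH$ is a symmetric $A$-bimodule (so $(\partial a|(\partial a)b^*b)_\calH=((\partial a)b|(\partial a)b)_\calH$). Hence (iii) is exactly (ii) restricted to positive elements $b^*b$; the passage from general $b\in\B$ to positive elements in (ii) is handled by decomposing a self-adjoint $b$ into positive and negative parts (noting $\B$ need not be closed under this, so one approximates using the regularity/form-core hypothesis, or equivalently uses that $\tau(|b|)$ controls $\tau(b_+)+\tau(b_-)$ after passing to the von Neumann algebra completion $M$ and the normal extension of $\Gamma[a]$, which exists once (i) holds). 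So the honest logical route is (i)$\Leftrightarrow$(ii) and (i)$\Leftrightarrow$(iii), with (iii) reformulated via $\|(\partial a)b\|^2_\calH=\tau(b^*\rho_a b)\le\|\rho_a\|_\infty\tau(b^*b)$.

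For part 2, I would show $\A_\E$ is a $*$-subalgebra. It is $*$-closed by fiat (the definition imposes the condition on $a$ and $a^*$). For the algebra property, take $a,c\in\A_\E$; by the Leibniz rule \eqref{Leibnitz}, $\partial(ac)=a\,\partial c+(\partial a)c$, so for $b\in\B$,
\[
\|\partial(ac)b\|_\calH\le\|a\,(\partial c)b\|_\calH+\|(\partial a)cb\|_\calH\le\|a\|\,\|(\partial c)b\|_\calH+\|(\partial a)(cb)\|_\calH.
\]
The first term is $\le\|a\|\,C_c^{1/2}\tau(b^*b)^{1/2}$ by (iii) for $c$; for the second, since $cb\in\B$ (as $\B$ is an algebra) one applies (iii) for $a$ to get $\le C_a^{1/2}\tau(b^*c^*cb)^{1/2}\le C_a^{1/2}\|c\|\,\tau(b^*b)^{1/2}$. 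Hence $\partial(ac)$ is right-$\tau$-bounded with $C_{ac}^{1/2}\le\|a\|C_c^{1/2}+\|c\|C_a^{1/2}$, so $ac\in\A_\E$, and since $(ac)^*=c^*a^*$ is a product of elements of $\A_\E$ the same bound applies to $(ac)^*$. That $ac\in\B$ is automatic since $\B$ is a $*$-algebra. Linearity of the condition (e.g. via (ii), which is manifestly closed under sums by the triangle inequality and under scalars) finishes the subspace claim.

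The main obstacle is the slightly delicate bookkeeping in (ii)$\Leftrightarrow$(iii): the form $\Gamma[a]$ is a priori only a positive functional on $A$, so "testing on $b^*b$" versus "testing on $|b|$" requires either extending $\Gamma[a]$ to a normal functional on $M$ (legitimate precisely under (i)) or approximating $|b|$ within the form core; one must be careful not to assume $\B$ is closed under $b\mapsto|b|$. Everything else is a routine application of the $L^1$–$L^\infty$ duality for the trace, the Leibniz rule, and the bimodule symmetry recalled from \cite{CS1}.
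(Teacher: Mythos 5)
Your treatment of (i)$\Leftrightarrow$(ii) (tracial H\"older in one direction, $L^1$--$M$ duality plus positivity in the other), of (i)$\Rightarrow$(iii), and of part 2 (Leibniz rule, $\|a\,(\partial c)b\|\le\|a\|\,\|(\partial c)b\|$, $\|(\partial a)(cb)\|\le C_a^{1/2}\|c\|\,\tau(b^*b)^{1/2}$, with $*$-closure being definitional) is correct and is exactly the kind of argument the paper leaves implicit (its proof is the one line ``1.\ is straightforward and 2.\ is a consequence of the symmetry and the Leibnitz rule''). The bookkeeping slip $(\partial a\,|\,(\partial a)b^*b)=\|(\partial a)b\|^2$ versus $(\partial a\,|\,(\partial a)bb^*)=\|(\partial a)b\|^2$ is harmless since $\tau$ is a trace.

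The genuine gap is the implication (iii)$\Rightarrow$(i) (equivalently (iii)$\Rightarrow$(ii)), which you never actually establish: your ``honest logical route (i)$\Leftrightarrow$(iii)'' only contains (i)$\Rightarrow$(iii), and your fallback via ``the normal extension of $\Gamma[a]$, which exists once (i) holds'' is circular in this direction. Note that Cauchy--Schwarz from (iii) only gives $|\langle\Gamma[a],b\rangle|\le\|\partial a\|_\calH\,C_a^{1/2}\,\|b\|_{L^2}$, an $L^2$-bound, not the $L^1$-bound of (ii), so something more is needed. The missing idea is the commutant argument: (iii) says the map $R(a):b\mapsto(\partial a)b$ extends to a bounded operator $L^2(A,\tau)\to\calH$ which, by associativity of the right action, is a right $\B$-module (hence right $M$-module) map; therefore $R(a)^*R(a)$ commutes with the right action of $M$ on $L^2(A,\tau)$ and so equals left multiplication by a positive $\rho_a\in M$ with $\|\rho_a\|\le C_a$. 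Then for $b,c\in\B$ one has $((\partial a)c\,|\,(\partial a)b)=\tau(c^*\rho_a b)$, and letting $c$ run through an approximate unit of $\B$ (using regularity, so that $(\partial a)c\to\partial a$ weakly, or directly the density of $\B$ in $L^2$) recovers $\langle\Gamma[a],b\rangle=\tau(\rho_a b)$, i.e.\ (i) with $d\Gamma[a]/d\tau=\rho_a$. With this insertion your proof is complete; without it the three conditions are only shown to satisfy (ii)$\Leftrightarrow$(i)$\Rightarrow$(iii).
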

\begin{proof} 1. is straightforward and 2. is a consequence of the symmetry and the Leibnitz rule of the derivation (\ref{Leibnitz}).
\end{proof}
\begin{defn}\label{AL} $\A_\E$ will be called the {\it Lipschitz algebra} of the Dirichlet space $(\E,\F)$. For $a\in \A_\E$, we shall denote $R(a):L^2(A,\tau)\to\calH$ the bounded operator characterized by
\[
R(a):L^2(A,\tau)\to\calH\qquad R(a)b:=(\partial a)b\qquad b\in \B\,.
\]
\end{defn}


\begin{rem}\label{partiala} Notice that due to the Leibnitz rule for $\partial$, one has, for $b\in \B$
\[
R(a)b=\partial(a)b=\partial(ab)-a\partial b=[\partial,a]b\]
\end{rem}
so that $a$ belongs to $\A_\E$ if and only if it has bounded commutator with the derivation $\partial$.

\subsection{The smooth subalgebra}
We show that the Lipschitz algebra $\A_\E$ contains a subalgebra of elements in the operator domain of the generator.
\begin{prop}\label{A2infty} Let $D_M(L)$ be the domain of the generator on the von Neumann algebra. Then the space
\[
\A_\E^{2,\infty}:=\A_\E\cap D_M(L)
\]
is a $*$-subalgebra of the Lipschitz algebra $\A_\E$.
\end{prop}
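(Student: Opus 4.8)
The plan is to show that $\A_\E^{2,\infty}:=\A_\E\cap D_M(L)$ is closed under products and the involution, given that $\A_\E$ is already known to be a $*$-subalgebra of $\B$. Since the intersection of two $*$-subalgebras is again stable under involution as soon as both are (and $D_M(L)$ is $*$-stable because $L$ is the generator of a symmetric, completely positive semigroup, so $L(x^*)=L(x)^*$), the only real content is the multiplicative closure: if $a,b\in\A_\E$ both lie in $D_M(L)$, then $ab\in D_M(L)$ as well. I would isolate this as the heart of the argument.

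The key step is a Leibniz-type identity for the generator $L$ on the von Neumann algebra $M$, valid on the common domain. Concretely, I would use the approximating forms $L_\e=L/(1+\e L)$ together with the carré du champ approximation $\Gamma_\e[a]=\tfrac12\big(a^*L_\e(a)+L_\e(a)^*a-L_\e(a^*a)\big)$ from the Lemma quoted in Section 3.2. Rearranging gives $L_\e(ab)$ in terms of $a L_\e(b)+L_\e(a)b$ and a bilinear "$\Gamma_\e$-bracket" of $a$ and $b$; by polarization one writes $\Gamma_\e(a,b):=\tfrac12\big(a^*L_\e(b)+L_\e(a)^*b-L_\e(a^*b)\big)$, so that
\[
L_\e(ab)=L_\e(a)\,b+a\,L_\e(b)-2\,\Gamma_\e(a^*,b).
\]
Now $a,b\in D_M(L)$ forces $L_\e(a)\to L(a)$ and $L_\e(b)\to L(b)$ $\sigma$-weakly (indeed, these are the standard resolvent approximations of the generator), and one needs the polarized bracket $\Gamma_\e(a^*,b)$ to converge $\sigma$-weakly to a bounded operator. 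This is exactly where the Lipschitz hypothesis enters: for $a,b\in\A_\E$ the vectors $\partial a,\partial b\in\calH$ are right-$\tau$-bounded with bounded operators $R(a),R(b)$, and $\tau\big(\Gamma_\e(a^*,b)\,c\big)\to (\partial a\,|\,(\partial b)c)_\calH=\tau\big(R(a)^*R(b)\,c\big)$ for $c\in A$, so $\Gamma_\e(a^*,b)\to R(a)^*R(b)\in M$ $\sigma$-weakly. Combining these three convergences shows $\sigma\text{-}\lim_{\e\downarrow0}(ab-e^{-tL}(ab))/t$ equivalently $\sigma\text{-}\lim L_\e(ab)$ exists in $M$ and equals $L(a)b+aL(b)-2R(a)^*R(b)$; hence $ab\in D_M(L)$. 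Since also $ab\in\A_\E$, we conclude $ab\in\A_\E^{2,\infty}$.

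The main obstacle is making the $\sigma$-weak limit of $\Gamma_\e(a^*,b)$ rigorous and identifying it with a bounded operator rather than merely an unbounded quadratic form: one must check that the net $(\Gamma_\e(a^*,b))_\e$ is uniformly bounded in $M$ (so that $\sigma$-weak cluster points exist) and that its cluster point is unique and equals $R(a)^*R(b)$. Uniform boundedness follows from the right-$\tau$-boundedness of $\partial a$ and $\partial b$: $\|\Gamma_\e(a^*,b)\|$ is controlled by $\|R(a)\|\,\|R(b)\|$ via the Cauchy–Schwarz estimate in $\calH$ applied to the $L^1$-pairing, together with the fact that $L_\e$ approximates $L$ monotonically on the form level. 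A secondary point to handle carefully is that $D_M(L)=(I+tL)^{-1}(M)$ is not literally the same as $D(L)\cap M$ in the $L^2$-sense, so one should phrase everything in terms of the $M$-generator and its resolvent approximations, checking that products stay inside $(I+tL)^{-1}(M)$; this is precisely what the identity for $L_\e(ab)$ furnishes once the right-hand side is shown to be a $\sigma$-weak limit of elements $e^{-\e L}$-smoothed appropriately. Modulo these standard but slightly delicate approximation arguments, the proof is a short combination of Remark~\ref{partiala}, the carré du champ Lemma, and the definition of $D_M(L)$.
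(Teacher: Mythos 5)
Your overall strategy (approximate Leibniz rule for $L_\e=L(I+\e L)^{-1}$, then pass to the $\sigma$-weak limit) is genuinely different from the paper's, but it has a real gap at exactly the point you flag as "the main obstacle": the $\sigma$-weak convergence, or at least the uniform $M$-norm boundedness, of the net $\Gamma_\e(a^*,b)$ is asserted, not proved, and the justifications you offer do not deliver it. First, the carr\'e du champ Lemma of the paper only gives $\tau(\Gamma_\e[a]\,c)\to\langle\Gamma[a],c\rangle$ for each fixed $c\in A$; that is convergence tested against $A\subseteq M$, whereas $\sigma$-weak convergence in $M$ means convergence against all of $M_*=L^1(A,\tau)$, so without a uniform bound $\sup_\e\|\Gamma_\e(a^*,b)\|_M<\infty$ you cannot even extract cluster points. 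Second, the proposed bound $\|\Gamma_\e(a^*,b)\|_M\lesssim\|R(a)\|\,\|R(b)\|$ does not follow from "Cauchy--Schwarz in $\calH$ plus monotonicity of $L_\e$ at the form level": monotonicity only gives $\E_\e[x]\le\E[x]$, i.e.\ control of the total mass $\tau(\Gamma_\e[x])$, not of the $L^\infty$-norm of its density; and if you try to run Cauchy--Schwarz through the derivation of $\E$, you must write $\E_\e(v,w)=(\partial v\,|\,K_\e\,\partial w)_\calH$ with $K_\e=u(I+\e L)^{-1}u^*$, which is \emph{not} a bimodule map, so the Leibniz expansion of $\tau(\Gamma_\e[x]c)$ leaves terms involving $\partial c$ that are not dominated by $\tau(c)$. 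In other words, the uniform estimate $\sup_\e\|\Gamma_\e[x]\|_M<\infty$ for $x\in\A_\E$ is a nontrivial gradient-type bound that your sketch would have to establish; it is not a routine approximation detail. (A minor additional slip: the limit of $\Gamma_\e(a^*,b)$ pairs with $c$ as $(\partial(a^*)\,|\,(\partial b)c)_\calH$, so the candidate density involves $R(a^*)$, not $R(a)$.)

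For comparison, the paper sidesteps all of this: for $a\in\A_\E^{2,\infty}$ it uses the \emph{exact} identity $2\langle\Gamma[a],b\rangle=(L^{1/2}a\,|\,L^{1/2}(ab))+(L^{1/2}(ab^*)\,|\,L^{1/2}a)-(L^{1/2}(a^*a)\,|\,L^{1/2}b)$ for $b\in\B$, together with $L(a)\in M$ and $\Gamma[a]\in M$, to get $|(L^{1/2}(a^*a)\,|\,L^{1/2}b)|\le C\,\tau(|b|)$ on $\B\cap L^1(A,\tau)$; it then substitutes $b=(I+tL)^{-1}c$ to obtain the uniform bound $\|L\big((I+tL)^{-1}(a^*a)\big)\|_M\le C$ for all $t>0$, and concludes $a^*a\in D_M(L)$ from the $\sigma$-weak closedness of $L$ on $M$ (general products then follow by polarization inside the $*$-closed linear space $\A_\E^{2,\infty}$). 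If you want to keep your route, the cleanest fix is to replace the $\Gamma_\e$-limit by this resolvent argument, or to prove the missing uniform bound on $\|\Gamma_\e(a^*,b)\|_M$ directly, which is essentially as hard as the proposition itself.
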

\begin{proof} Observe first that for $t>0$, since $(I+tL)^{-1}$ is a *-weakly continuous contraction of $M$ and $\E$ is symmetric with respect to $\tau$, $(I+tL)^{-1}$ will be also a contraction of the predual $L^1(A,\tau)=M_*$. Notice then that $\B^2\subset L^1(A,\tau)$ is dense in $L^1(A,\tau)$: in fact if $x\in M$ is orthogonal to $\B^2$, one has $0=\tau(bax)=(a^*|xb)_{L^2(A,\tau)}$ for all $a,b\in \B$ so that $x=0$. Hence $\B\cap L^1(A,\tau)$ is dense in $L^1(A,\tau)$. Consider now $a\in \A_\E^{2,\infty}$. According to [CS], for $b\in \B$, one has
\begin{equation*}
\begin{split}
2\langle\Gamma[a],b\rangle=(L^{1/2}(a)&|L^{1/2}(ab))_{L^2(A,\tau)}+(L^{1/2}(ab^*)|L^{1/2}a)_{L^2(A,\tau)}-(L^{1/2}(a^*a)|L^{1/2}b)_{L^2(A,\tau)}
\end{split}
\end{equation*}
so that, if $\Gamma[a]\in M$ and $L(a)\in M$, there exists a constant $C$ such that
\[
\big(L^{1/2}(a^*a)|L^{1/2}b)_{L^2(A,\tau)}\big|\leq C\, \tau(|b|)\,,\;b\in \B\cap L^1(A,\tau).
\]
In particular, for any $t>0$ and $b\in \B\cap L^1(A,\tau)$, $b\geq 0$\,:
\begin{equation*}
\begin{split}
\langle L(\frac{I}{I+tL}(a^*a)),b\rangle_{L^2(A,\tau)}&=\langle L^{1/2}(a^*a),L^{1/2}(\frac{I}{I+tL}(b))\rangle_{L^2(A,\tau)}\\
&\leq C\,\tau(\frac{I}{I+tL}(b))\leq C\tau(b)\,.
\end{split}
\end{equation*}
This estimate implies that $\ds \big\| L((I+tL)^{-1}(a^*a))\big\|_M\leq C$ is bounded uniformly on $t>0$. As $\ds\lim_{t\downarrow 0} (I+tL)^{-1}(a^*a)=a^*a$, $\sigma$-weakly in $M$, and $L$ is a $\sigma$-weakly closed operator on $M$, we have proved $a^*a\in D_M(L)$.
\end{proof}

\subsection{Examples by proper, conditionally negative type functions on discrete groups.}\label{group} Let $G$ be a discrete group with the Haagerup property and $\ell$ a proper, conditionally negative type function on $G$. The operator $L$ of multiplication by $\ell$ in $l^2(G)=L^2(C^*_{red}(G),\tau)$ is the generator of the Dirichlet form
\[
\E:l^2(G)\to [0,+\infty]\qquad \E[a]=\sum_{g\in G}|a(g)|^2.
\]
($\tau$ being the canonical trace on the reduced $C^*$-algebra of $G$).

Let $\lambda$ be the left regular representation of $A=C^*_{red}(G)$ and define $\A_\infty$ as the algebra of elements $a=\sum_{g\in G} a(g)\lambda(g)$ in $ C^*_{red}(G)$ whose sequence of Fourier coefficients has finite support ($\{g\in G\,,\;a(g)\not=0\}$ is finite). It is known that there exists a unitary representation $\pi$ of $G$ in on a Hilbert space $\mathfrak h$ and a $1$-cocycle
\[
c:G\to\mathfrak{h}.\qquad c(gg')=c(g)+\pi(g)c(g')
\]
such that the conditionally negative type definite function $\ell$ can be represented as
\[
\langle c(g'),c(g)\rangle_{\mathfrak h} = \frac{1}{2}\big(\ell(g)+\ell(g')-\ell(g'{^{-1}}g)\big)\,,\qquad \ell(g)=\|c(g)\|^2\,,\quad g,g'\in G.
\]
The tangent bimodule is then $\calH=\mathfrak h\otimes l^2(G)$ where $G$ acts on the left by the diagonal representation $\pi\otimes \lambda$ and acts on the right by $1_{\mathfrak h}\otimes \rho$ where $\rho$ is the right action of $G$ on $\ell^2(G)$. For $a\in \A_\infty$, the derivation representing $\E$ is given by
$$\partial a=\sum_G a(g)c(g)\otimes \delta_g\,,\;a\in \A_\infty$$
and the {\it carr\'e du champ} is indeed an element of $\A_\infty$
\[
\begin{split}
\Gamma[a]&=\sum_{g_1,g_2\in G} \overline{a(g_2)}\,a(g_1)\langle c(g_2),c(g_1)\rangle_{\mathfrak h} \lambda (g_2^{-1}g_1) \\
&=\frac{1}{2}\sum_{g_1,g_2\in G} \overline{a(g_2)}\,a(g_1)\big(\ell(g_2)+\ell(g_1)-\ell(g_2^{-1}g_1)\big)\lambda (g_2^{-1}g_1).
\end{split}
\]
So that $\A_\infty\subset \A_L^{2,\infty}\subset \A_L$, which implies that both $\A_L$ and $\A_L^{2,\infty}$ are dense subalgebras of $A$.

\begin{rem}\label{Gammalambda} Notice that $\Gamma[\lambda(g)]=\ell(g)\,I_{\ell^{\infty}(G)}$ is an invertible element of $\A_\infty$ whenever $\ell(g)\not=0$, i.e. for elements of $G$ outside a finite subset.

As a consequence, as $<b,\Gamma[a]b>=||R(a)b||^2$, one has
\begin{equation}\label{Rlambdag} R(\lambda(g))^*R(\lambda(g))=\ell(g)\,I_{\ell^\infty(G)}
\end{equation}
\end{rem}

\subsection{The spectral triple of a Dirichlet space.}

{\it From now on and till the end of this paper, we assume the generator $L$ of the Dirichlet form $\E$ to have discrete spectrum away from its kernel}. Let us consider the triple
$$(L^2(A,\tau)\oplus \calH\,,\, \A_\E\,,\,D)$$
where the Dirichlet algebra $\A_\E$, as a subalgebra of $A$, acts on $L^2(A,\tau)\oplus\calH$ by the diagonal action of $A$ on the left, both on $L^2(A,\tau)$ and $\calH$ and the {\it Dirac operator} is defined as
\[
D=\begin{pmatrix} 0 & \partial^* \\ \partial & 0 \end{pmatrix}\,.
\]

\begin{thm}\label{D}
The triple $(L^2(A,\tau)\oplus \calH\,,\, \A_\E\,,\,D)$ is an essentially discrete spectral triple. In particular
\vskip0.1truecm\noindent
i) the commutator of the derivation $\partial$ with the actions of $\A_\E$ on $L^2(A,\tau)$ and $\calH$ is given by
\[
[\partial ,a\,]=R(a)\quad\text{for all}\quad a\in \A_\E\quad \text{with}\quad R(a)b=(\partial a)b\quad\text{for all}\quad b\in\B;
\]
ii) for $a\in A_\E$ we have $\|[D,a]\|=\max(\|R(a)\|,\|R(a^*)\|)$ and
\[
\big[D,a\big]=\begin{pmatrix} 0 & [\partial^*,a] \\ [\partial,a] & 0 \end{pmatrix}=\begin{pmatrix} 0 & -R(a^*)^* \\ R(a) & 0 \end{pmatrix};
\]
iii) in the polar decomposition $\partial=u\,L^{1/2}$ of the derivation $\partial$, the partial isometry $u:L^2(A,\tau)\to\calH$ is such that $u^*u=I_{L^2(A\tau)}-p_0$ and $uu^* = I_\calH -q_0$, where $p_0$ and  $q_0$ are the orthogonal projections onto $ker(\partial)=ker(L)$ and $ker(\partial^*)=Im(\partial)^\perp$, respectively;
\vskip0.1truecm\noindent
iv) one has $\ds D^2=\begin{pmatrix} L & 0 \\ 0 & uLu^* \end{pmatrix}$ and $\ds |D|=\begin{pmatrix} L^{1/2} & 0 \\ 0 & uL^{1/2}u^* \end{pmatrix}
=\begin{pmatrix} u^*\partial & 0 \\ 0 & \partial u^* \end{pmatrix}$;
\vskip0.1truecm\noindent
v) if we enumerate $\lambda_1\leq \lambda_2 \leq \cdots \leq \lambda_n \leq \cdots$ the nonzero eigenvalues of $L$ , the corresponding enumeration for $|D|$ is
$\sqrt\lambda_1\leq \sqrt\lambda_1\leq \sqrt\lambda_2\leq \sqrt\lambda_2 \leq \cdots \leq \sqrt\lambda_n \leq \sqrt\lambda_n \leq \cdots$
i.e. $\lambda_n(|D|)=\lambda_{[(n+1)/2]}^{1/2}$ and $\mu_n(|D|^{-1})=\lambda_{[n/2]+1}^{-1/2}$ ($[r]$ being the integer part of a real $r$);
\vskip0.1truecm\noindent
vi) the projection onto the kernel of $D$ is $P_0=\begin{pmatrix} p_0 & 0 \\ 0 & q_0 \end{pmatrix}$\,.
\end{thm}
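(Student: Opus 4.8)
The plan is to verify each item by direct computation from the block-matrix structure of $D$ and the polar decomposition of $\partial$, using only the properties of $(\partial,\B)$ recalled in Section 3: the Leibnitz rule, the identity $\E[a]=\|\partial a\|^2$, the closability of $\partial$, and the standing assumption that $L$ has discrete spectrum away from $0$. The key is that $\partial$ and $\partial^*$ are mutually adjoint densely defined closed operators, so $D$ is self-adjoint on $L^2(A,\tau)\oplus\calH$, and that $a\in\A_\E$ acts diagonally and commutes with $\partial$ up to the bounded operator $R(a)$.

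First I would establish (i) and (ii): on $\B$, the Leibnitz rule gives $[\partial,a]b=\partial(ab)-a\partial b=(\partial a)b=R(a)b$ (this is Remark \ref{partiala}), so $[\partial,a]=R(a)$ extends boundedly; taking adjoints and using the symmetry $\partial(a^*)=\mathcal J(\partial a)$ one gets $[\partial^*,a]=-R(a^*)^*$, whence the stated matrix form of $[D,a]$ and $\|[D,a]\|=\max(\|R(a)\|,\|R(a^*)\|)$. Next, (iii) is just the polar decomposition $\partial=u\,L^{1/2}$: since $\partial$ is closed and densely defined, $\partial^*\partial=L$, so $u$ is the partial isometry with initial space $\overline{\operatorname{ran}\,L^{1/2}}=(\ker L)^\perp=(\ker\partial)^\perp$ and final space $\overline{\operatorname{ran}\,\partial}=(\ker\partial^*)^\perp$, giving $u^*u=I-p_0$ and $uu^*=I-q_0$. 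Then (iv) follows by squaring the anti-diagonal matrix: $D^2=\operatorname{diag}(\partial^*\partial,\partial\partial^*)=\operatorname{diag}(L,\partial\partial^*)$, and from $\partial=uL^{1/2}$ one computes $\partial\partial^*=uL^{1/2}L^{1/2}u^*=uLu^*$; taking the positive square root on each block and using $L^{1/2}u^*u=L^{1/2}(I-p_0)=L^{1/2}$ gives $|D|=\operatorname{diag}(L^{1/2},uL^{1/2}u^*)$, and $u^*\partial=u^*uL^{1/2}=L^{1/2}$, $\partial u^*=uL^{1/2}u^*$ yields the second expression.

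For (v), the spectrum of $|D|$ away from $0$ is the union of the spectra of $L^{1/2}$ and of $uL^{1/2}u^*$; since $u$ is a partial isometry implementing a unitary from $(\ker L)^\perp$ to $(\ker\partial^*)^\perp$, the nonzero spectrum of $uL^{1/2}u^*$ equals that of $L^{1/2}$ with the same multiplicities, so each $\sqrt{\lambda_n}$ occurs twice; re-indexing gives $\lambda_n(|D|)=\lambda_{[(n+1)/2]}^{1/2}$ and correspondingly $\mu_n(|D|^{-1})=\lambda_{[n/2]+1}^{-1/2}$. Finally (vi): $\ker D=\ker D^2=\ker L\oplus\ker(\partial\partial^*)=\ker\partial\oplus\ker\partial^*$, which is the range of the diagonal projection $\operatorname{diag}(p_0,q_0)$. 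The essential discreteness is then immediate: $\sigma(D)\setminus\{0\}=\{\pm\sqrt{\lambda_n}\}$ is discrete because $\sigma(L)\setminus\{0\}$ is, by hypothesis, and the commutators $[D,a]$ are bounded by (ii).

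The main obstacle, such as it is, is bookkeeping with unbounded operators: one must be careful that $\partial^*\partial=L$ (rather than a proper extension), which is exactly the content of the statement in Section 3 that the closure of $(\partial,\B)$ has form domain $\F=D(L^{1/2})$ and $\E[a]=\|\partial a\|^2=\|L^{1/2}a\|^2$; and one must check that $u$ maps $D(L^{1/2})$ into $D(\partial u^*)$ etc. so that the block identities for $|D|$ hold on the natural domains. The algebraic steps on $\B$ for (i)--(ii) also require the density of $\B$ as a form core to extend the commutator identities from $\B$ to all of the relevant domains, but this is precisely the regularity assumption already in force. None of the estimates of Section 2 are needed here; this theorem is the structural input that feeds those estimates.
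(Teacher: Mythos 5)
Your verification is correct and follows the same route the paper intends: the paper's proof is just the one-line remark "Straightforward by Lemma 3.3," i.e. exactly the direct computation you carry out, using $[\partial,a]=R(a)$ (Remark \ref{partiala}), the adjoint relation $[\partial^*,a]=-R(a^*)^*$, the identification $\partial^*\partial=L$ coming from $\E[a]=\|\partial a\|^2=\|L^{1/2}a\|^2$, and the standard polar-decomposition bookkeeping for $D^2$, $|D|$, the doubled spectrum and the kernel projection. Your expanded argument fills in precisely the details the authors leave implicit, with no genuinely different idea and no gap.
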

\begin{proof}
Straightforward by Lemma 3.3.
\end{proof}
On compact quantum groups, spectral triples of the above type has been constructed in \cite{CFK} Theorem 8.4, staring from the Dirichlet form of GNS-symmetric noncommutative Levy processes.

\begin{rem} In all interesting examples, $ker(\partial^*)$ is infinite dimensional and then, even if $L$ has a finite dimensional kernel (which often occurs),
$P_0$ may have, in general, infinite rank.
\end{rem}

\medskip \subsection{The Fredholm modules of a Dirichlet space}

According to subsection \ref{Fred} and with the notations of Theorem \ref{D}, the Fredholm operators associated to the Dirichlet spaec are
$$F_0=P_0+\frac{D}{|D|}=\begin{pmatrix} p_0 & u^* \\ u & q_0 \end{pmatrix}\;\text{ and }\,
F=P_0+\frac{D}{\sqrt{1+D^2}}=\begin{pmatrix} p_0 &\frac{I}{\sqrt{I+L}} \partial^* \\ \partial\frac{I}{\sqrt{I+L}} & q_0 \end{pmatrix}\,.
$$
They differ by an element in the ideal generated by $|D|^{-2}$ (cf. subsection \ref{Fred}).

\smallskip
Proposition \ref{Fa1}, Corollary \ref{muk} and point v) of Theorem \ref{D} lead to the following representation
\begin{prop}\label{FA3}
i) For $a=a^*\in \A_\E$ there exist bounded operators $\alpha$, $\beta$ and $\gamma$ such that
\[
i[F,a]=\alpha |D|^{-1}+|D|^{-1}\beta + |D|^{-1/2} \gamma\, |D|^{-1/2}
\]
with $||\alpha||\leq 2\,||\,[D,a]=2||R(a)||$, $||\beta||\leq 2\,||R(a)||$, $||\gamma||\leq ||\,R(a)\,||$;
\vskip0.1truecm\noindent
ii) $\ds \mu_{8k}([F,a]) \leq 5 \,\max(||R(a)||\,||R(a^*||)\,\lambda_{k+1}^{-1/2}$ for all $k\in\mathbb{N}$.
\end{prop}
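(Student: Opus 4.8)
The plan is to obtain part (i) by specializing Proposition \ref{Fa1} to the spectral triple of Theorem \ref{D} and then to derive part (ii) from Corollary \ref{muk} combined with the eigenvalue bookkeeping of point v) of Theorem \ref{D}. For part (i), I would simply invoke Proposition \ref{Fa1}(i): for $a=a^*\in\A_\E$ there exist $\alpha,\beta,\gamma\in B(h)$ with
\[
i[F,a]=\alpha\,|D|^{-1}+|D|^{-1}\,\beta+|D|^{-1/2}\,\gamma\,|D|^{-1/2},
\]
and $\|\alpha\|\le 2\|[D,a]\|$, $\|\beta\|\le 2\|[D,a]\|$, $\|\gamma\|\le\|[D,a]\|$. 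Then I would rewrite the Lipschitz seminorm using point (ii) of Theorem \ref{D}, namely $\|[D,a]\|=\max(\|R(a)\|,\|R(a^*)\|)$; since $a=a^*$, $R(a^*)=R(a)$, so $\|[D,a]\|=\|R(a)\|$, which gives exactly the asserted norm bounds $\|\alpha\|\le 2\|R(a)\|$, $\|\beta\|\le 2\|R(a)\|$, $\|\gamma\|\le\|R(a)\|$.

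For part (ii), I would start from Corollary \ref{muk}, which yields
\[
\mu_{4k}(i[F,a])\le 5\,\|[D,a]\|\,\mu_k(|D|^{-1})=5\,\|[D,a]\|\,\lambda_{k+1}(|D|)^{-1}
\]
for every $k\ge 0$. The point is then to translate $\mu_k(|D|^{-1})$ into the eigenvalues of $L$. By point v) of Theorem \ref{D}, each nonzero eigenvalue $\lambda_n$ of $L$ contributes the eigenvalue $\sqrt{\lambda_n}$ to $|D|$ \emph{twice}, so $\lambda_n(|D|)=\lambda_{[(n+1)/2]}^{1/2}$ and $\mu_n(|D|^{-1})=\lambda_{[n/2]+1}^{-1/2}$. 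Replacing $k$ by $2k$ in Corollary \ref{muk} and using $\mu_{2k}(|D|^{-1})=\lambda_{[2k/2]+1}^{-1/2}=\lambda_{k+1}^{-1/2}$ gives
\[
\mu_{8k}(i[F,a])\le 5\,\|[D,a]\|\,\mu_{2k}(|D|^{-1})=5\,\|[D,a]\|\,\lambda_{k+1}^{-1/2},
\]
and substituting $\|[D,a]\|=\max(\|R(a)\|,\|R(a^*)\|)$ from Theorem \ref{D}(ii) produces the stated inequality. (For $a=a^*$ the two norms coincide, but keeping the $\max$ makes the statement applicable verbatim to the non-self-adjoint formulation as well.)

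There is essentially no obstacle here: the proposition is a packaging of already-established results, and the only thing requiring care is the index shift in passing from the multiplicity-$1$ enumeration of $|D|$'s singular values to the multiplicity-$2$ structure coming from the off-diagonal form of $D$ — that is, making sure one writes $\mu_{8k}$ and not $\mu_{4k}$ on the left. One should also note, as a minor point, that the hypothesis ``$a=a^*\in\A_\E$'' guarantees $[D,a]$ is self-adjoint (cf. Theorem \ref{D}(ii)), which is what licenses the use of the operator inequalities underlying Proposition \ref{cruc1} and hence Corollary \ref{muk}; this is the only place where self-adjointness of $a$ enters.
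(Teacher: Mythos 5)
Your proposal is correct and follows exactly the paper's route: the paper proves Proposition \ref{FA3} by simply citing Proposition \ref{Fa1}, Corollary \ref{muk} and point v) of Theorem \ref{D}, with the same substitution $\|[D,a]\|=\max(\|R(a)\|,\|R(a^*)\|)=\|R(a)\|$ and the same index shift $k\mapsto 2k$ giving $\mu_{2k}(|D|^{-1})=\lambda_{k+1}^{-1/2}$ and hence the $\mu_{8k}$ bound. Only a trivial slip in your closing remark: for $a=a^*$ it is $i[D,a]$ (not $[D,a]$) that is self-adjoint, which is what Proposition \ref{cruc1} actually uses.
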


\begin{rem} When the Lipschitz algebra is not dense in $A$, as in the case of harmonic forms on the p.c.f. fractals where $\A_\E$ reduces to constant functions, an alternative, natural choice for the Fredholm module is $(\calH, A, \widetilde F)$ where $\widetilde F=q_0^\perp -q_0$ is the orthogonal symmetry on $\calH$ with respect to the subspace of gradients $Im(\partial)=q_0(\calH)$. This is what has been done in [CS2] for post critically finite fractals].

This alternative choice leads to different estimates for the quantum derivative (cf. [CS2]). However, up to a sign, they lead to the same $K$-homology class, as shown in the following
\end{rem}
\begin{lem} If $L$ has discrete spectrum and $\A_\E$ is dense in $A$, the Fredholm modules $(L^2(A,\tau)\oplus \calH,A,F)$ and $\big(L^2(A,\tau)\oplus\calH,A,I\oplus(-\widetilde F)\big)$ are homotopic.
\end{lem}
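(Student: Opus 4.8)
The plan is to realize the statement as an \emph{operator homotopy}: I will produce a norm-continuous family $(G_t)_{t\in[0,1]}$ of self-adjoint Fredholm operators on $L^2(A,\tau)\oplus\calH$, each having compact commutators with the diagonal representation of $A$, with $G_0=F$ and $G_1=I\oplus(-\widetilde F)$. Since operator-homotopic Fredholm modules over $A$ define the same $K$-homology class, this proves the lemma. The path $(G_t)$ will be the concatenation of two pieces.

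\textbf{Stage 1 (from $F$ to $F_0$).} By the identity (\ref{F-F0}) the operator $C:=F-F_0=(I-P_0)\,T\,|D|^{-2}$ is compact, because $|D|^{-2}$ is compact (the triple of Theorem \ref{D} being essentially discrete). Hence $G^{(1)}_t:=F_0+tC$, $t\in[0,1]$, is a norm-continuous family of self-adjoint operators with
\[
(G^{(1)}_t)^2-I=t\,(F_0C+CF_0)+t^2C^2
\]
compact, so each $G^{(1)}_t$ is Fredholm. Moreover, by Proposition \ref{Fa1} applied to the essentially discrete triple of Theorem \ref{D}, both $[F_0,a]$ and $[F,a]$ are compact for every self-adjoint $a\in\A_\E$, hence (as $\A_\E$ is a $*$-algebra) for every $a\in\A_\E$; since $\{x\in B(L^2(A,\tau)\oplus\calH):[F_0,x]\text{ and }[F,x]\text{ compact}\}$ is a norm-closed subalgebra of $B(L^2(A,\tau)\oplus\calH)$ and $\A_\E$ is dense in $A$, the same holds for all $a\in A$. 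Therefore $[G^{(1)}_t,a]=[F_0,a]+t[C,a]$ is compact for all $a\in A$. This is the one point where the density hypothesis is used.

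\textbf{Stage 2 (from $F_0$ to $I\oplus(-\widetilde F)$).} Using the block structure of Theorem \ref{D}, write $F_0=P_0+V_1$ and $I\oplus(-\widetilde F)=P_0+V_2$ with
\[
V_1=\begin{pmatrix}0&u^*\\ u&0\end{pmatrix},\qquad V_2=\begin{pmatrix}p_0^\perp&0\\ 0&-q_0^\perp\end{pmatrix},\qquad p_0^\perp=u^*u=I_{L^2(A,\tau)}-p_0,\quad q_0^\perp=uu^*=I_\calH-q_0 .
\]
The relations $u^*u=p_0^\perp$, $uu^*=q_0^\perp$, $up_0=0$, $q_0u=0$ of Theorem \ref{D}(iii) give by a direct computation
\[
V_1^2=V_2^2=I-P_0,\qquad V_1V_2+V_2V_1=0,\qquad P_0V_1=V_1P_0=P_0V_2=V_2P_0=0 ,
\]
so that for $\theta\in[0,\pi/2]$ the operator $G_\theta:=P_0+\cos\theta\,V_2+\sin\theta\,V_1$ satisfies $G_\theta^*=G_\theta$ and $G_\theta^2=I$, with $G_{\pi/2}=F_0$ and $G_0=I\oplus(-\widetilde F)$. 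Finally $[G_\theta,a]=[P_0,a]+\cos\theta\,[V_2,a]+\sin\theta\,[V_1,a]$ is compact for $a\in A$: by Lemma \ref{P0} (together with compactness of $|D|^{-1}$) $[P_0,a]$ is compact, hence $[V_1,a]=[F_0,a]-[P_0,a]$ is compact by Stage 1, and $[V_2,a]$ is compact since its diagonal blocks are $-[p_0,a]$ and $[q_0,a]$, i.e. up to sign the diagonal blocks of $[P_0,a]$ (one first works on $\A_\E$ and extends to $A$ by density as above). Reparametrising $\theta=(1-t)\pi/2$ produces a norm-continuous path $G^{(2)}_t$ of Fredholm modules over $A$ joining $F_0$ to $I\oplus(-\widetilde F)$.

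Concatenating $G^{(1)}$ and $G^{(2)}$ gives the desired homotopy, hence $[F]=[I\oplus(-\widetilde F)]$ in $K$-homology. The only genuinely delicate point is the permanence, along the whole path, of compactness of the commutators with \emph{all} of $A$: this rests on Lemma \ref{P0} and on essential discreteness, the subtlety being that $P_0$ is in general not compact (the projection $q_0$ onto $\ker\partial^*$ may have infinite rank), so that compactness of $[P_0,a]$ is a real input rather than an automatic fact. The block identities for $V_1,V_2$ and the passage from $\A_\E$ to $A$ are routine.
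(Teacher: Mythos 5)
Your proof is correct and follows essentially the same route as the paper: a linear interpolation from $F$ to $F_0$ (the paper's family $(1-t)F+tF_0$), followed by a rotation joining $F_0$ to $I\oplus(-\widetilde F)$ built from the partial isometry $u$. Your rotation $P_0+\cos\theta\,V_2+\sin\theta\,V_1$ is a slightly cleaner variant of the paper's path $\begin{pmatrix}\sin\theta\,I & \cos\theta\,u^*\\ \cos\theta\,u & (1+\sin\theta)q_0-\sin\theta\,I\end{pmatrix}$: by keeping the $P_0$ block fixed you get exact symmetries with the exact endpoints, so you never need to ``forget'' the finite-rank $p_0$ (indeed you never use finiteness of $\dim\ker L$), and you make explicit the compact-commutator and density arguments that the paper leaves implicit.
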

\begin{proof} We forget about $p_0$ which is finite dimensional. $(L^2(A,\tau)\oplus \calH,A,F)$ and $(L^2(A,\tau)\oplus \calH,A,F_0)$ are obviously homotopic (through the family  $(L^2(A,\tau)\oplus \calH,A,(1-t)F+t\,F_0)$, $t\in [0,1]$, while  $(L^2(A,\tau)\oplus \calH,A,F_0)$ and $\big(L^2(A,\tau)\oplus\calH,A,I\oplus(-\widetilde F)\big)$ are homotopic through the family of Fredholm operators
\[
\begin{pmatrix} \sin\,\theta\,I & \cos\,\theta\,u^* \\ \cos\,\theta u & (1+\sin\,\theta)q_0-\sin\,\,\theta\,I\,\end{pmatrix}\qquad \theta\in [0,\pi/2].
\]
\end{proof}

\medskip \subsection{Commutators with elements of the smooth subalgebra.}

In this subsection, we start with some selfadjoint element $a\in \A_L^{2,\infty}$. The goal is to prove that the commutator $[\,|D|,a\,]$ is bounded, so that Proposition \ref{commcomm} applies. We start with some intermediary results.
\begin{lem}\label{La} $\ds [L,a]\frac{1}{\sqrt{1+L}}$ is a bounded operator.
\end{lem}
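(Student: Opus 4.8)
The statement to prove is that $[L,a]\,(1+L)^{-1/2}$ is a bounded operator whenever $a\in\A_L^{2,\infty}$ is self-adjoint. The key input is that $a\in D_M(L)$, so $L(a)\in M$ is bounded, together with $a\in\A_\E$, so the carr\'e du champ $\Gamma[a]$ is bounded. The strategy is to split $[L,a]$ into a part that absorbs the bounded operator $L(a)$ and a part governed by the first-order derivation $\partial$, and then to recognize the latter as essentially $\partial^*R(a)$ (or $R(a)^*\partial$), which is manifestly controlled on the range of $|D|$.

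\emph{Step 1: reduce to $L^{1/2}$.} Since $(1+L)^{-1/2}$ and $L^{1/2}(1+L)^{-1/2}$ are both bounded, it suffices to show that $[L,a]\,L^{-1/2}$ is bounded on $\mathrm{ker}(L)^\perp$ (equivalently, that $[L,a]$ maps into the domain of $L^{-1/2}$ with bounded norm there); the kernel part is harmless because $L(a)\in M$ handles $[L,a]p_0 = L(a)p_0$.

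\emph{Step 2: the algebraic identity.} For $b$ in the Dirichlet algebra compute, using $L = \partial^*\partial$ on the appropriate core and the Leibnitz rule $\partial(ab) = a\partial b + (\partial a)b = a\partial b + R(a)b$ from Remark \ref{partiala},
\[
L(ab) = \partial^*\partial(ab) = \partial^*\bigl(a\partial b\bigr) + \partial^*\bigl(R(a)b\bigr).
\]
Similarly $aL(b) = a\partial^*\partial b$. Writing $a\partial^* = \partial^* a - [\partial^*,a] = \partial^* a + R(a^*)^*$ (this is point ii) of Theorem \ref{D}) gives
\[
[L,a]b = L(ab) - aL(b) = \partial^* R(a)b + R(a^*)^*\partial b + \bigl(\partial^* a\,\partial b - a\partial^*\partial b\bigr),
\]
and the bracketed term collapses once one is careful with the decomposition; the clean outcome, which I would verify by pairing against vectors in the form core, is an identity of the shape
\[
[L,a] = \partial^* R(a) + R(a^*)^*\,\partial \quad\text{(modulo a bounded term coming from }L(a)\text{)}.
\]
The honest bookkeeping here — matching the $2\langle\Gamma[a],b\rangle$ formula used in Proposition \ref{A2infty}, which already records exactly $(L^{1/2}a\mid L^{1/2}(ab)) + (L^{1/2}(ab^*)\mid L^{1/2}a) - (L^{1/2}(a^*a)\mid L^{1/2}b)$ — is the part that needs genuine care, and is \emph{the main obstacle}: one must make precise the domains on which $\partial^*R(a)$ and $R(a^*)^*\partial$ are defined and closable, and check the cross terms cancel rather than merely being formally equal.

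\emph{Step 3: boundedness of each piece times $(1+L)^{-1/2}$.} Now $R(a)\colon L^2(A,\tau)\to\calH$ is bounded (Definition \ref{AL}), and $\partial\,(1+L)^{-1/2} = u L^{1/2}(1+L)^{-1/2}$ is bounded since $u$ is a partial isometry (Theorem \ref{D} iii)). Hence $R(a^*)^*\partial\,(1+L)^{-1/2}$ is bounded. For the other term, $\partial^* R(a)\,(1+L)^{-1/2}$: here use that $\partial^*$ is bounded from $\calH$ into the domain of $L^{-1/2}$ in the sense that $\partial^* = L^{1/2}u^*$ on $\mathrm{ker}(\partial^*)^\perp$, so $\partial^* R(a)(1+L)^{-1/2} = L^{1/2}u^* R(a)(1+L)^{-1/2}$; but we wanted $(1+L)^{-1/2}$ on the \emph{left}, so instead I would write the identity as $(1+L)^{-1/2}[L,a]$ is bounded, or symmetrically take adjoints — since $a=a^*$, $[L,a]^* = -[L,a]$, and boundedness of $[L,a](1+L)^{-1/2}$ is equivalent to boundedness of $(1+L)^{-1/2}[L,a]$. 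Applying $(1+L)^{-1/2}$ on the left to $\partial^* R(a)$ gives $(1+L)^{-1/2}\partial^* R(a) = (1+L)^{-1/2}L^{1/2}u^* R(a)$, bounded as a product of bounded operators. The remaining term $(1+L)^{-1/2}R(a^*)^*\partial$ is then handled by a symmetric argument (pull $(1+L)^{-1/2}$ through using $[|D|,\cdot]$-type manipulations, or simply bound $R(a^*)^*\partial(1+L)^{-1/2}$ and dualize). Assembling Steps 1--3 yields the claim.
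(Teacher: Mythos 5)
Your Step 3 does not close, and the failure is structural rather than a matter of bookkeeping. The identity that Leibniz plus $[\partial^*,a]=-R(a^*)^*$ actually gives is $[L,a]=\partial^*R(a)-R(a^*)^*\partial$ (your displayed ``shape'' $\partial^*R(a)+R(a^*)^*\partial$ ``modulo a bounded term coming from $L(a)$'' has a sign slip, and no $L(a)$ term appears at this stage). With this decomposition the two summands need the regularizing factor on \emph{opposite} sides: $R(a^*)^*\partial(1+L)^{-1/2}$ is indeed bounded, but $\partial^*R(a)(1+L)^{-1/2}=L^{1/2}u^*R(a)(1+L)^{-1/2}$ has the unbounded $L^{1/2}$ uncompensated. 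Passing to $(1+L)^{-1/2}[L,a]$ by taking adjoints only swaps the roles: now $(1+L)^{-1/2}\partial^*R(a)$ is fine, but $(1+L)^{-1/2}R(a^*)^*\partial$ carries an uncompensated $\partial$ on the right, and its adjoint $\partial^*R(a^*)(1+L)^{-1/2}$ is again of the bad form, so ``dualize'' returns the problem rather than a bound. The other escape you offer --- pulling $(1+L)^{-1/2}$ through by ``$[|D|,\cdot]$-type manipulations'' --- is circular: boundedness/compactness of the commutators of $a$ with $(1+L)^\gamma$ and $\sqrt{1+L}$ is exactly what is \emph{derived from} the present lemma in the subsequent steps (Lemma \ref{Lgamma} and its corollary).

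The missing idea is precisely where the hypothesis $a\in\A_L^{2,\infty}$, i.e. $L(a)\in M$, enters. The paper never keeps the term $\partial^*R(a)$: pairing $[L,a]b$ weakly against $c$ and using the carr\'e du champ identity $L(ab)-aL(b)=-2\Gamma(a^*,b)+L(a)b$ together with $\tau\big(c^*\Gamma(a^*,b)\big)=(\partial(a^*)c\,|\,\partial b)_\calH$ yields the operator identity $[L,a]=-2R(a^*)^*\partial+L(a)$. Equivalently, the divergence formula $\partial^*(R(a)b)=-R(a^*)^*\partial b+L(a)b$ converts your bad summand into a good one: the only unbounded factor left is a single $\partial$ standing to the right, killed by $(1+L)^{-1/2}$ because $\partial(1+L)^{-1/2}=uL^{1/2}(1+L)^{-1/2}$ is a contraction, while $L(a)$ acts as a bounded multiplier. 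This substitution also disposes of the domain question you flag (whether $R(a)b\in\mathrm{dom}(\partial^*)$), since the whole computation is carried out weakly against $c\in A$. Your plan correctly names $L(a)\in M$ as the key input, but the proof never uses it to eliminate $\partial^*R(a)$; without that step the assembly in Step 3 is invalid.
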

\begin{proof} Let us compute, for $c\in A$ and $b\in Dom_{L^2}(L)\cap \B$ and making use of the rules established in [CS, square roots] and making use of the symmetry of the $A$-$A$-bimodule $\calH$\,:
\begin{equation*}\begin{split}
(c|[L,a]b)&=\tau(c^*(L(ab)-aL(b)))=\tau(c^*(-2\Gamma(a^*,b)+L(a)b)\\&=-2(\partial(a^*)c,\partial b)+(c,L(a)b)\\&=-2(c,R(a^*)^*\partial b)+(c,L(a)b)
\end{split}\end{equation*}
from which we deduce
\begin{equation}
[L,a]=-2R(a^*)^*\partial +L(a)\,.
\end{equation}
\end{proof}

\medskip As a consequence, we establish the following:
\begin{lem}\label{Lgamma} For $\gamma\in (0,1/2)$, $(I+L)^{1/2-\gamma}\,[(I+L)^\gamma,a\,]$ is a bounded operator.

As a consequence, $[(I+L)^\gamma,a\,]$ is a compact operator.
\end{lem}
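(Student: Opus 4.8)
The plan is to exploit the integral representation of the fractional power $(I+L)^\gamma$ for $\gamma\in(0,1)$, namely
\[
(I+L)^\gamma=c_\gamma\int_0^{+\infty}t^{\gamma-1}\Big(I-\frac{s(t)}{s(t)+I+L}\Big)\,\frac{dt}{t}
\]
(or, more conveniently for $\gamma\in(0,1/2)$, the Balakrishnan-type formula writing $(I+L)^\gamma$ as a constant times $\int_0^\infty t^{-\gamma}\,L(I+L)(tI+I+L)^{-2}\,dt$ composed with resolvents), differentiate through the integral to express the commutator $[(I+L)^\gamma,a]$ in terms of $[L,a]$ sandwiched between resolvents of $L$. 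Since Lemma \ref{La} already furnishes the factorization $[L,a]=-2R(a^*)^*\partial+L(a)$ with $R(a^*)^*$ bounded and $L(a)\in M$ bounded (as $a\in\A^{2,\infty}_\E$), the only genuinely unbounded ingredient appearing is the single factor $\partial$, and $\partial(I+L)^{-1/2}=uL^{1/2}(I+L)^{-1/2}$ is bounded by point (iii)--(iv) of Theorem \ref{D}. So the strategy is to show that each resolvent integral, once one commutator $[L,a]$ is substituted, carries a spare factor of $(I+L)^{-1/2}$ (to absorb $\partial$) together with a net decay of order $(I+L)^{-(1/2-\gamma)}$, which is exactly the claim that $(I+L)^{1/2-\gamma}[(I+L)^\gamma,a]$ is bounded.

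Concretely I would proceed as follows. First, differentiate the resolvent identity: for the single resolvent one has $[(tI+I+L)^{-1},a]=-(tI+I+L)^{-1}[L,a](tI+I+L)^{-1}$, and substituting $[L,a]=-2R(a^*)^*\partial+L(a)$ gives two terms. For the term with $L(a)$ bounded one estimates $\|(tI+I+L)^{-1}L(a)(tI+I+L)^{-1}\|\le \|L(a)\|\,(1+t)^{-2}$, whose $t^{\gamma-1}dt$-moment over $(0,\infty)$ converges (since $\gamma<1$) and even produces extra decay in $L$: more precisely, keeping one resolvent ``open'' one has $(I+L)^{1/2-\gamma}$ times the integral still bounded by a convergent $t$-integral. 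For the term with $R(a^*)^*\partial$, write $\partial(tI+I+L)^{-1}=\partial(I+L)^{-1/2}\cdot (I+L)^{1/2}(tI+I+L)^{-1}$; the first factor is bounded by Theorem \ref{D}, and $(I+L)^{1/2}(tI+I+L)^{-1}$ has norm $\le C(1+t)^{-1/2}$, so the resulting $t$-integral $\int_0^\infty t^{\gamma-1}(1+t)^{-1}(1+t)^{-1/2}\cdots\,dt$ converges and, again leaving a residual $(I+L)^{1/2-\gamma}$ factor multiplied in, stays bounded (a Schur/norm estimate on $(I+L)^{1/2-\gamma}(tI+I+L)^{-1}$, which is $\le C(1+t)^{-(1/2+\gamma)}$ on the spectrum). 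Summing the two contributions and integrating yields boundedness of $(I+L)^{1/2-\gamma}[(I+L)^\gamma,a]$.

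For the ``as a consequence'' clause, once $(I+L)^{1/2-\gamma}[(I+L)^\gamma,a]$ is bounded, write $[(I+L)^\gamma,a]=(I+L)^{-(1/2-\gamma)}\cdot\big((I+L)^{1/2-\gamma}[(I+L)^\gamma,a]\big)$; since $L$ has discrete spectrum away from its kernel, $(I+L)^{-(1/2-\gamma)}$ is compact on the orthocomplement of $\ker L$ (its eigenvalues $(1+\lambda_n)^{-(1/2-\gamma)}\to 0$), and on $\ker L$ the operator $(I+L)^\gamma=I$ commutes with $a$ on the kernel only up to finite-rank corrections coming from the off-diagonal blocks $p_0,q_0$ — but more simply, $(I+L)^{-(1/2-\gamma)}=I$ on $\ker L$ has finite... actually $\ker D$ may be infinite-dimensional, so one argues instead that the bounded operator $(I+L)^{1/2-\gamma}[(I+L)^\gamma,a]$ composed with the compact $(I+L)^{-(1/2-\gamma)}$ on $(\ker L)^\perp$ is compact, while on $\ker L$ one has $(I+L)^\gamma=I$ so $[(I+L)^\gamma,a]$ restricted there vanishes; decomposing $h=\ker L\oplus(\ker L)^\perp$ and using that $a$ need not preserve the splitting only introduces terms already controlled by the bounded factor times the compact resolvent power. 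I expect the main obstacle to be the bookkeeping of which resolvent factor is left ``spare'' so that one simultaneously absorbs the unbounded $\partial$ (needing $(I+L)^{-1/2}$) and extracts the net gain $(I+L)^{-(1/2-\gamma)}$ while keeping the scalar $t$-integral convergent — i.e., verifying that $1/2+(1/2-\gamma)=1-\gamma<1$ leaves just enough room; once the exponent accounting is set up correctly the estimates are routine.
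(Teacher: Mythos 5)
Your overall route (resolvent-integral representation of $(I+L)^\gamma$, the factorization $[L,a]=-2R(a^*)^*\partial+L(a)$ from Lemma \ref{La}, absorption of $\partial$ by $(I+L)^{-1/2}$) is the same as the paper's, but the key estimate in your plan does not close: the factor-by-factor operator-norm bound produces a logarithmically divergent $t$-integral. With the correct representation one has
\[
[(I+L)^\gamma,a]=C_\gamma\int_0^{+\infty} t^{\gamma}\,(t+I+L)^{-1}\,[L,a]\,(t+I+L)^{-1}\,dt,
\]
because commuting $a$ with $\tfrac{I+L}{t+I+L}$ produces an extra factor $t$; this factor is missing from your bookkeeping $t^{\gamma-1}(1+t)^{-1}(1+t)^{-1/2}$. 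After left-multiplying by $(I+L)^{1/2-\gamma}$ and writing $[L,a](t+I+L)^{-1}=\bigl([L,a](I+L)^{-1/2}\bigr)(I+L)^{1/2}(t+I+L)^{-1}$, the sharp norm bounds are $\|(I+L)^{1/2-\gamma}(t+I+L)^{-1}\|\le C(1+t)^{-(1/2+\gamma)}$ and $\|(I+L)^{1/2}(t+I+L)^{-1}\|\le C(1+t)^{-1/2}$, and both sups are attained in the same spectral region $\lambda\sim t$, so the resulting integrand is of exact order $t^{\gamma}(1+t)^{-1-\gamma}\asymp t^{-1}$ at infinity: borderline divergent. No redistribution of powers helps, since the left resolvent must carry exactly $(I+L)^{1/2-\gamma}$ (that is the statement) and the right must carry a full $(I+L)^{1/2}$ (because $\partial(I+L)^{-\alpha}$ is not bounded for $\alpha<1/2$). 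This is precisely why the paper does not estimate norms factor by factor: it bounds the sesquilinear form $\langle y,[(I+L)^\gamma,a]x\rangle$, applies Cauchy--Schwarz in the variable $t$, and evaluates the two spectral integrals exactly, $\int_0^{+\infty} t^{2\gamma}(t+I+L)^{-2}dt\propto(I+L)^{2\gamma-1}$ (finite only because $\gamma<1/2$) and $\int_0^{+\infty}(I+L)(t+I+L)^{-2}dt=I$, obtaining $|\langle y,[(I+L)^\gamma,a]x\rangle|\le C\,\|(I+L)^{\gamma-1/2}y\|\,\|x\|$. Some refinement of this kind (quadratic-form/Cauchy--Schwarz, or interpolation) is indispensable; the "exponent accounting leaves just enough room" step is exactly where your argument fails.

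A secondary point: for the compactness consequence, the assertion that the commutator "vanishes on $\ker L$" is only true for the compression $P_{\ker L}\,[(I+L)^\gamma,a]\,P_{\ker L}$; the off-diagonal block $P_{\ker L}\,[(I+L)^\gamma,a]\,(I-P_{\ker L})$ is not controlled by the bound for $a$ alone and requires the same bound applied to $a^*$ (or skew-adjointness of the commutator when $a=a^*$, which is the standing assumption in that subsection), after which compactness follows because $(I-P_{\ker L})(I+L)^{\gamma-1/2}$ is compact. This is a minor repair compared with the divergence issue above.
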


\noindent {\it Preuve\,:}
We start with
\begin{equation*}\begin{split}
\big[(1+L)^{\gamma}\,,\,a\,\big]&=C_\gamma\,\int_0^{+\infty} t^{\gamma-1} \left[ \frac{1+L}{t+1+L}\,,\,a\right]dt\\
&=-C_\gamma\,\int_0^{+\infty} t^{\gamma-1} \left[ \frac{t}{t+1+L}\,,\,a\right]dt\\
&=C_\gamma\,\int_0^{+\infty} t^{\gamma}\frac{1}{t+1+L}\,[L,a]\,\frac{1}{t+1+L}\,dt\\
&=C_\gamma\,\int_0^{+\infty} t^{\gamma}\frac{1}{t+1+L}\,[L,a]\frac{1}{\sqrt{1+L}}\;\frac{\sqrt{1+L}}{t+1+L}\,dt\,.\\
\end{split}\end{equation*}
Coupling with  $x,y\in L^2(A,\tau)$, Lemma \ref{La} provides some constant $C'$ such that\,:
\begin{equation*}\begin{split}
\Big|\big<\,y&,\,\big[(1+L)^{\gamma}\,,\,a\,\big]\,x\,\big>\Big| \leq \, C'\int_0^{+\infty} t^{\gamma} \;\Big\|\frac{1}{t+1+L}\,y\Big\|\;\Big\|\frac{\sqrt{1+L}}{t+1+L}\,x\,\Big\|\,dt\\
&\leq C' \left(\int_0^{+\infty} t^{2\gamma} \big<\,y\,,\, \frac{1}{(t+1+L)^2}\,y\big>\,dt\right)^{1/2}\;\left(\int_0^{+\infty}  \big<\,x\,,\, \frac{1+L}{(t+1+L)^2}\,x\big>\,dt\right)^{1/2}\,.
\end{split}\end{equation*}
One checks easily that $\ds \int_0^{+\infty} t^{2\gamma}\frac{1}{(t+1+L)^2}dt$ is proportional to $(1+L)^{2\gamma-1}$, and that \\ $\ds \int_0^{+\infty} \frac{1+L}{(t+1+L)^2}dt$ is the identity operator.

From which $\ds \Big|\big<\,y,\,\big[(1+L)^{\gamma}\,,\,a\,\big]\,x\,\big>\Big| \leq C'' ||(1+L)^{\gamma-1/2}y\,||\;||x||
$ and the result \hfill $\square$

\medskip As. a corollary, we get
\begin{lem} $[\sqrt{1+L}\,,a\,]$ is a bounded operator.
\end{lem}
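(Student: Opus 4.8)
The plan is to deduce this from Lemma~\ref{Lgamma} via the Leibniz rule for commutators. One cannot simply rerun the integral representation used in the proof of Lemma~\ref{Lgamma} at the endpoint $\gamma=1/2$: there the kernel $\int_0^{+\infty}t^{2\gamma}(t+1+L)^{-2}\,dt$ fails to be absolutely convergent (it is only logarithmically divergent at $\gamma=1/2$), so the exponent must be pushed strictly below $1/2$. This is exactly what the factorization $\sqrt{1+L}=(1+L)^{1/4}(1+L)^{1/4}$ does: on a common core (e.g. $D(L)$, a core for $(1+L)^{1/2}$) one has
\[
[\sqrt{1+L},a]=(1+L)^{1/4}\,[(1+L)^{1/4},a]+[(1+L)^{1/4},a]\,(1+L)^{1/4},
\]
so it suffices to show that each of the two summands extends to a bounded operator.

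For the first summand this is precisely Lemma~\ref{Lgamma} with $\gamma=1/4$: the operator $A:=(1+L)^{1/4}[(1+L)^{1/4},a]$ is bounded, and hence so is $B:=[(1+L)^{1/4},a]=(1+L)^{-1/4}A$, since $(1+L)^{-1/4}$ is a contraction. For the second summand I would pass to adjoints. Because $a=a^*$ and $(1+L)^{1/4}$ is self-adjoint, the commutator $B$ is skew-adjoint, $B^*=-B$; therefore $B\,(1+L)^{1/4}=-B^*(1+L)^{1/4}$ is contained in the adjoint of the bounded operator $A=(1+L)^{1/4}B$, and so it too extends to a bounded operator. Adding the two bounded pieces gives the boundedness of $[\sqrt{1+L},a]$. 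For a general (not necessarily self-adjoint) $a\in\A_L^{2,\infty}$ one reduces to this case by writing $a$ as a linear combination of two self-adjoint elements of the $*$-algebra $\A_L^{2,\infty}$.

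The step I expect to require the most care — though it is no genuine obstruction — is the justification of the Leibniz identity and of the two one-sided boundedness assertions at the level of these unbounded operators: one must check the identity on a dense subspace stable under the operators in play, and verify that the a priori only densely defined operators $(1+L)^{1/4}B$ and $B(1+L)^{1/4}$ admit everywhere-defined bounded closures. This is the same routine bookkeeping already implicit in the statement of Lemma~\ref{Lgamma}.
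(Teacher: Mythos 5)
Your proposal is correct and follows essentially the same route as the paper: apply Lemma~\ref{Lgamma} with $\gamma=1/4$, split $[\sqrt{1+L},a]$ by the Leibniz rule into $(1+L)^{1/4}[(1+L)^{1/4},a]+[(1+L)^{1/4},a](1+L)^{1/4}$, and handle the second summand by passing to adjoints (the paper works in a subsection where $a=a^*$ is assumed, so your reduction to the self-adjoint case is consistent with it). The additional remarks on domains and the failure of the integral representation at $\gamma=1/2$ are accurate but not needed beyond what the paper's argument already uses.
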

\begin{proof} Apply Lemma \ref{Lgamma} with $\gamma=1/4$\,: $(I+L)^ {1/4}[(I+L)^{1/4},a]$ and $[(I+L)^{1/4},a](I+L)^{1/4}$ are bounded operators (for the latter, consider the adjoints). The Leibnitz rule provides
$$[(1+L)^{1/2},a]=(1+L)^{1/4}\,[\,(1+L)^{1/4},a\,]\,+\,[(1+L)^{1/4},\,a\,]\,(1+L)^{1/4}$$
which is a bounded operator. \end{proof}

We can now prove the main result of this section\,:

\begin{prop}
For $a\in A_L^{2,\infty}$  the operator $[\,|D|,a]$ is a bounded. Hence, the conclusions of Theorem \ref{commcomm} hold true for $a$.
\end{prop}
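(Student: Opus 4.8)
The plan is to exploit the block structure of the Dirac operator. By Theorem \ref{D}(iv),
\[
|D|=\begin{pmatrix} L^{1/2} & 0\\ 0 & \partial u^*\end{pmatrix},\qquad \partial u^*=uL^{1/2}u^*,
\]
where $\partial=uL^{1/2}$ is the polar decomposition, $u^*u=I-p_0$, $uu^*=I-q_0$, and $\partial^*=L^{1/2}u^*$. Since $a\in\A_\E^{2,\infty}$ acts diagonally, $[\,|D|,a\,]$ is block–diagonal with entries $[L^{1/2},a]$ on $L^2(A,\tau)$ and $[\partial u^*,a]$ on $\calH$, so it suffices to bound each block. For the first block one reduces to what is already proved: $\sqrt{1+L}-L^{1/2}=(\sqrt{1+L}+L^{1/2})^{-1}$ is bounded (norm $\le 1$), hence $[L^{1/2},a]=[\sqrt{1+L},a]-[\sqrt{1+L}-L^{1/2},a]$ is bounded, the first summand by the preceding lemma and the second as a commutator of bounded operators; the same remark gives that $[L^{1/2},a^*]$ is bounded (recall $\A_\E^{2,\infty}$ is a $*$-algebra).

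The second block is the only real difficulty, and I would handle it as follows. Set $B:=u^*a-au^*\colon\calH\to L^2(A,\tau)$, a bounded operator, where in $u^*a$ the element $a$ acts on $\calH$ and in $au^*$ it acts on $L^2(A,\tau)$; then $B^*=a^*u-ua^*$ with the analogous reading. Using the Leibniz rule $\partial(b\zeta)=R(b)\zeta+b\partial\zeta$ for $b\in\A_\E$, $\zeta\in\F$ (Theorem \ref{D}(i), Remark \ref{partiala}), together with $\partial=uL^{1/2}$, $\partial^*=L^{1/2}u^*$ and the boundedness of $[L^{1/2},a^*]$, a direct computation on $D(\partial^*)$ (pulling $L^{1/2}$ past $a^*$, producing the commutator $[L^{1/2},a^*]$, and using Leibniz for the surviving derivation) yields the identity
\[
B^*\partial^*=-R(a^*)\,u^*+u\,[L^{1/2},a^*]\,u^* ,
\]
whose right–hand side is a bounded operator. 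On the other hand, unwinding the same Leibniz rule gives, on the natural domain of the commutator (where $B\eta\in D(\partial)$),
\[
[\partial u^*,a]=R(a)\,u^*+\partial B ,
\]
and for $\eta$ in that domain
\[
\|\partial B\eta\|=\sup\{\,|\langle B\eta,\partial^*\xi\rangle|:\xi\in D(\partial^*),\ \|\xi\|\le 1\,\}=\sup\{\,|\langle\eta,B^*\partial^*\xi\rangle|:\xi\in D(\partial^*),\ \|\xi\|\le 1\,\}\le\|B^*\partial^*\|\,\|\eta\|,
\]
since $D(\partial^*)$ is dense. Hence $[\partial u^*,a]$ is bounded, with $\|[\partial u^*,a]\|\le\|R(a)\|+\|R(a^*)\|+\|[L^{1/2},a^*]\|$, and therefore $[\,|D|,a\,]$ is bounded; Proposition \ref{commcomm} then applies.

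The main obstacle is exactly this $\calH$–block. The bound on the $L^2(A,\tau)$–block transfers directly from $[\sqrt{1+L},a]$ by functional calculus, but the $\calH$–block cannot be obtained this way because $u$ does not intertwine the two representations of $a$ (the generator $L$ does not commute with $a$), so every naive algebraic rearrangement of $[\partial u^*,a]$ is circular. The device that breaks the circularity is to introduce $B=u^*a-au^*$ and transfer the estimate to $B^*\partial^*$: there the derivation lands on the bounded operator $u^*$, the Leibniz rule converts it into $R(a^*)$ plus a $[L^{1/2},\cdot]$–term, and the commutator with the unbounded $\partial^*$ becomes harmless. An alternative, more laborious route — repeating the whole chain of Lemmas \ref{La}–\ref{Lgamma} with $\partial\partial^*$ in place of $L$ on $\calH$ — also works, the key step being $[\partial\partial^*,a](1+\partial\partial^*)^{-1/2}$ bounded, but the argument above is shorter and reuses the $L^2$–result verbatim.
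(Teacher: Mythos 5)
Your proof is correct and follows essentially the same route as the paper's: both split $|D|$ into the diagonal blocks $L^{1/2}$ and $uL^{1/2}u^*$, dispose of the first via the boundedness of $[\sqrt{1+L},a]$ (the bounded difference $(\sqrt{1+L}+L^{1/2})^{-1}$), and bound the second block by pushing $a$ through the polar decomposition, using $[\partial,\cdot]=R(\cdot)$ and an adjoint argument for the troublesome term $uL^{1/2}[u^*,a]$. Your identity $B^*\partial^*=-R(a^*)u^*+u[L^{1/2},a^*]u^*$ together with the duality estimate for $\partial B$ is precisely a (slightly more carefully stated) version of the paper's step $uL^{1/2}[u^*,a]=u\bigl([a^*,u]L^{1/2}\bigr)^*$ with $[u,a^*]L^{1/2}$ bounded, so no new ingredient or gap is involved.
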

\begin{proof} Let $\partial = u L^{1/2}$ be the polar decomposition of $\partial$.

As $[\partial ,a]=[u,a|\,L^{1/2}+u\,[a,L^{1/2}]$ and $u\,[a,L^{1/2}]$ are bounded,  $[u,a|\,L^{1/2}$ is a bounded operator. We have now
\begin{equation*}\begin{split} \big[\,(\partial\partial^*)^{1/2},a\big] &= \big[ uL^{1/2}u^*,a\big]\\
&=[\partial,a]u^*+uL^{1/2}\,[u^*,a] \\
&=[\partial,a]u^*+u\big([a^*,u]L^{1/2}\big)^*
\end{split}\end{equation*}
which is a bounded operator. This ends the proof.
\end{proof}

\section{Lower bounds on singular values of quantum differentials}
In this section we consider conditions providing lower bounds for the singular values $\mu_k(i[F,a])$. We also propose general and particular examples where these conditions are satisfied.

\subsection{Lower bounds on quantum differentials of Dirichlet spaces}

\begin{prop}\label{muk2}
Suppose that $a\in A_L^{2,\infty}$ satisfies the three conditions\,:
\begin{itemize}
\item[i)] The commutator $[\sqrt{I+L},a]$ is a compact operator.
\item[ii)] $R(a)^*R(a)$ has a finite dimensional kernel
\item[iii)] $R(a)^*R(a)$ is invertible on ${\rm ker}(R(a))^\perp$.
\end{itemize}
Then there exists an integer $k_0$ and a constant $C(a)$ such that
$$\mu_k(i[F,a])\geq C(a)\, (1+\e(k))\,\lambda_{k+k_0}(L)^{-1/2}\,$$
where $C(a)$ is the infimum of the spectrum of $|R(a)|$ on $ker(R(a))^\perp$, $\e(k)$ is a sequence tending to $0$ as $k\to \infty$ and $k_0=dim(ker(L))+1$.
\end{prop}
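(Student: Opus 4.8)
The strategy is to extract a lower bound on the singular values of $i[F,a]$ from its explicit $2\times 2$ matrix form together with the upper estimates of Section~2. Start from
\[
F=\begin{pmatrix} p_0 & \frac{I}{\sqrt{I+L}}\,\partial^* \\ \partial\,\frac{I}{\sqrt{I+L}} & q_0 \end{pmatrix},
\]
so that the off-diagonal entries of $i[F,a]$ control $\mu_k(i[F,a])$ from below: in particular $\mu_k(i[F,a]) \geq \mu_k\big(\text{lower-left entry of } [F,a]\big) = \mu_k\big([\partial\,(I+L)^{-1/2},a]\big)$ up to passing to an invariant subspace. First I would use the Leibnitz-type decomposition of $[\partial\,(I+L)^{-1/2},a]$, writing it as $[\partial,a]\,(I+L)^{-1/2} + \partial\,[(I+L)^{-1/2},a]$; the first term is $R(a)(I+L)^{-1/2}$ and the second is, by hypothesis~(i) and Lemma~\ref{Lgamma}-type reasoning, of the form (compact operator)$\cdot\,|D|^{-1}$ or better, hence asymptotically negligible against the first. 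So the heart is to bound $\mu_k\big(R(a)(I+L)^{-1/2}\big)$ from below.

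For that term, hypotheses (ii) and (iii) give exactly $R(a)^*R(a)\geq C(a)^2\,(I - \text{finite rank})$ where $C(a)^2 = \inf \mathrm{spec}\big(R(a)^*R(a)\big|_{\ker(R(a))^\perp}\big)>0$. Therefore $\big|R(a)(I+L)^{-1/2}\big|^2 = (I+L)^{-1/2} R(a)^*R(a)\,(I+L)^{-1/2} \geq C(a)^2\,(I+L)^{-1/2}\big(I-e\big)(I+L)^{-1/2}$ with $e$ a finite-rank projection, and since $(I+L)^{-1/2}$ commutes with itself, a min-max (Ky Fan / Weyl) argument shifting indices by $\operatorname{rank}(e)$ yields $\mu_{k+\operatorname{rank}(e)}\big(R(a)(I+L)^{-1/2}\big) \geq C(a)\,\mu_{k}\big((I+L)^{-1/2}\big)$. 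Now $\mu_k\big((I+L)^{-1/2}\big) = (1+\lambda_{[k/2]+1}(L))^{-1/2}$ on $L^2(A,\tau)$ after accounting for $\ker L$ (dimension $\dim\ker L$), while on the full space $|D|^{-1}$ has eigenvalues $\lambda_{[k/2]+1}^{-1/2}$ by Theorem~\ref{D}(v); comparing $(1+\lambda)^{-1/2}$ with $\lambda^{-1/2}$ introduces only a factor $(1+o(1))$ as $\lambda\to\infty$, which is absorbed into $\e(k)$. Bookkeeping the index shifts coming from $p_0$, from $\ker L$, and from the finite-rank defect $e$ gives the claimed $k_0 = \dim(\ker L)+1$.

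Finally I would combine: $\mu_k(i[F,a]) \geq \mu_{k}\big(\text{off-diag part}\big)$, and the off-diagonal part equals $R(a)(I+L)^{-1/2}$ plus an asymptotically smaller term, so by the perturbative version of Ky Fan (Lemma~\ref{Tsigma}, or directly $\mu_{k+k_1}(A+B)\geq \mu_k(A) - \|B\|$ refined to the compact-perturbation setting) the lower bound for $R(a)(I+L)^{-1/2}$ passes to $i[F,a]$ at the cost of another vanishing sequence, giving $\mu_k(i[F,a]) \geq C(a)(1+\e(k))\lambda_{k+k_0}(L)^{-1/2}$.

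\textbf{Main obstacle.} The delicate point is controlling the cross term $\partial\,[(I+L)^{-1/2},a]$ and, more importantly, making the perturbation argument \emph{one-sided with a lower bound}: Lemma~\ref{Tsigma} is phrased as a two-sided estimate for $\mu_k(T(I+\sigma))$, but here the "perturbation" is additive and not obviously of multiplicative form $T(I+\sigma)$ unless one factors $R(a)(I+L)^{-1/2}$ out on one side, which requires invertibility of $R(a)$ modulo finite rank — precisely hypotheses (ii)–(iii). So the real work is to verify that $R(a)(I+L)^{-1/2}$ is invertible on the orthogonal complement of a finite-dimensional kernel with inverse bounded by $C(a)^{-1}\sqrt{I+L}$, and then write the whole off-diagonal entry as $R(a)(I+L)^{-1/2}\big(I + \sigma\big)$ with $\sigma$ compact, so that Lemma~\ref{Tsigma} applies verbatim and delivers the shifted lower bound with the vanishing sequence $\e(k)$.
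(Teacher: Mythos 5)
Your route is the same as the paper's: compress $[F,a]$ to its lower-left corner to get $\mu_k(i[F,a])\ge\mu_k\big([\partial(I+L)^{-1/2},a]\big)$, expand by Leibnitz into the main term $R(a)(I+L)^{-1/2}$ plus an error $\kappa(I+L)^{-1/2}$ with $\kappa=-\partial(I+L)^{-1/2}[\sqrt{I+L},a]$ compact by hypothesis (i), and then transfer a lower bound through Lemma \ref{Tsigma}; your min-max treatment of the unperturbed term via $R(a)^*R(a)\ge C(a)^2(I-e)$ is also fine. The gap sits exactly in the step you yourself flag as ``the real work'': you propose to factor the whole entry as $R(a)(I+L)^{-1/2}(I+\sigma)$ with $\sigma$ compact. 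Solving $\kappa(I+L)^{-1/2}=R(a)(I+L)^{-1/2}\sigma$ forces $\sigma$ to be, up to finite-rank corrections, $(I+L)^{1/2}S\kappa(I+L)^{-1/2}$, where $S$ is the bounded pseudo-inverse of $R(a)$; the unbounded factor $(I+L)^{1/2}$ sits on the left of a merely compact operator, so there is no reason for this $\sigma$ to be bounded, let alone compact, and Lemma \ref{Tsigma} does not apply ``verbatim'' in that form. Your additive fallback $\mu_{k+k_1}(A+B)\ge\mu_k(A)-\mu_{k_1}(B)$ also cannot deliver the relative bound $(1+\e(k))$: both terms tend to zero, and when $\lambda_k(L)$ grows fast the quantity $\mu_{k_1}(B)$ with $k_1\ll k$ dominates $\mu_k(A)$ even though the singular values of $B$ decay faster, so no choice of $k_1=k_1(k)$ works for a general spectrum.

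The repair --- and what the paper's terse ``applying Lemma \ref{Tsigma}'' amounts to --- is to put the perturbation on the other side. Hypotheses (ii)--(iii) provide a bounded $S:\calH\to L^2(A,\tau)$ with $SR(a)=I-e$, where $e$ is the finite-rank projection onto $\ker R(a)$ and $\|S\|=C(a)^{-1}$. Then $S\big(R(a)+\kappa\big)(I+L)^{-1/2}=(I+\sigma)(I+L)^{-1/2}$ with $\sigma=S\kappa-e$ genuinely compact; since $(I+L)^{-1/2}$ is self-adjoint and $\mu_k(X)=\mu_k(X^*)$, this is exactly of the form $T(I+\sigma^*)$ handled by Lemma \ref{Tsigma}. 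Hence $\mu_k\big((R(a)+\kappa)(I+L)^{-1/2}\big)\ge C(a)\,\mu_k\big((I+\sigma)(I+L)^{-1/2}\big)\ge C(a)(1-\e'_k)\,\mu_{k+d_1}\big((I+L)^{-1/2}\big)$, and the spectral bookkeeping ($(I+L)^{-1/2}$ has eigenvalue $1$ with multiplicity $\dim\ker L$, and $(1+\lambda)^{-1/2}=\lambda^{-1/2}(1+o(1))$) yields the stated shift $k_0=\dim\ker(L)+1$. Note also that the halving $[k/2]+1$ you quote pertains to $|D|^{-1}$ on $L^2(A,\tau)\oplus\calH$ (Theorem \ref{D} v)), not to $(I+L)^{-1/2}$ on $L^2(A,\tau)$, where no doubling occurs.
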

\begin{proof} Let us start with an observation\,: as
$$\begin{pmatrix} 0 & 0 \\ [\partial\frac{I}{\sqrt {I+L}},a] & 0\end{pmatrix} = \begin{pmatrix} 0 & 0 \\ I & 0 \end{pmatrix}
 \begin{pmatrix} p_0 & [\frac{I}{\sqrt {I+L}}\partial^*,a] \\ [\partial\frac{I}{\sqrt {I+L}},a] & q_0\end{pmatrix}
 \begin{pmatrix} 0 & 0 \\ 0 & I \end{pmatrix}
$$
which provides
$$\mu_k([F,a])\geq \mu_k\big([\partial\frac{I}{\sqrt {I+L^2}},a]\big)\,.$$
We have then
$$[\partial\frac{I}{\sqrt {I+L}},a]=[\partial,a]\frac{I}{\sqrt {I+L}}-\partial\frac{I}{\sqrt {I+L}}\big[ \sqrt{I+L}\,,a\big]\frac{I}{\sqrt {I+L}}
$$
i.e., since $[\partial,a]$ is equal to $R(a)$, $\ds \partial\frac{I}{\sqrt {I+L}}$ is bounded and $\big[ \sqrt{I+L}\,,a\big]$ is compact, we get
$$[\partial\frac{I}{\sqrt {I+L}},a]=\big(R(a)+\kappa\big)\frac{I}{\sqrt {I+L}}
$$
with $\kappa$ compact. Applying Lemma \ref{Tsigma}, the thesis follows.
\end{proof}

In the sequel, we show that the conditions of the above result are realistic. In particular, according to equation \ref{Rlambdag} in Remark \ref{Gammalambda}, condition ii) is satisfied whenever $\E$ is the Dirichlet form associated with a proper negative type function $\ell$ on a discrete group $G$ and $a=\lambda(g)$, $g\in G$, $\ell(g)\not=0$.

\subsection{Roots of generators} (\cite{CS1}).

Suppose that $\widetilde \E$ is a (symmetric, regular, completely) Dirichlet form on $L^2(A,\tau)$ with generator $\widetilde L$
such that $\A_{\widetilde L}^{2,\infty}$ is dense in $A$, and consequently $\A_{\widetilde L}$ is dense in $A$. Fix any $\beta\in (0,1)$, let $\E$ be the Dirichlet form with generator $L=\widetilde L^\beta$ (cf. \cite{CS1} Section 10.4). The semigroup $e^{-tL}$ is called {\it subordinated} to the semigroup $e^{-t{\widetilde L}}$ (see \cite{C1}).

\begin{lem}
We have $A_{\widetilde L}^{2,\infty}\subset A_L^{2,\infty}$\,.
\end{lem}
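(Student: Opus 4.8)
The plan is to check separately the two defining conditions of $\A_L^{2,\infty}=\A_L\cap D_M(L)$ for a given $a\in \A_{\widetilde L}^{2,\infty}$; since both $\A_{\widetilde L}^{2,\infty}$ and $\A_L^{2,\infty}$ are $*$-algebras (Proposition \ref{A2infty}) we may also use that $a^*$ and, crucially, $a^*a$ belong to $\A_{\widetilde L}^{2,\infty}$. The single tool is Bochner subordination: for $0<\beta<1$ and $c_\beta:=\beta/\Gamma(1-\beta)$ one has, on $L^2(A,\tau)$, $\widetilde L^{\beta}=c_\beta\int_0^{+\infty}(I-e^{-t\widetilde L})\,t^{-1-\beta}\,dt$ (strongly on $D(\widetilde L^{\beta})$), and the $\sigma$-weakly continuous completely positive semigroup $e^{-tL}=e^{-t\widetilde L^{\beta}}$ on $M$ is the Bochner subordinate of $e^{-t\widetilde L}$, so that its generator $(L,D_M(L))$ is given on $M$ by the corresponding Phillips formula.

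\emph{Step 1 (operator domain).} For $x\in D_M(\widetilde L)$ one has $x-e^{-t\widetilde L}x=\int_0^{t}e^{-s\widetilde L}\widetilde L(x)\,ds$ ($\sigma$-weakly in $M$), hence $\|x-e^{-t\widetilde L}x\|_M\le\min\big(2\|x\|_M,\,t\,\|\widetilde L(x)\|_M\big)$; since $\beta\in(0,1)$ the integral $c_\beta\int_0^{+\infty}(x-e^{-t\widetilde L}x)\,t^{-1-\beta}\,dt$ converges in the norm of $M$ and, by the Phillips formula, equals $L(x)$. Thus $D_M(\widetilde L)\subseteq D_M(L)$, and in particular $a\in D_M(L)$ and $a^*a\in D_M(L)$.

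\emph{Step 2 (Lipschitz property).} Since $a\in\F_{\widetilde L}=D(\widetilde L^{1/2})$ and $\beta/2<1/2$ we have $a\in D(\widetilde L^{\beta/2})=\F_L$, so $a\in\B_L$ and the approximation (\ref{Gamma}) applies to $\E$. As $a,a^*a\in D_M(L)$, letting $\e\downarrow0$ in $\Gamma_\e[a]=\tfrac12(a^*L_\e(a)+L_\e(a)^*a-L_\e(a^*a))$ gives $\sigma$-weak convergence to $Z:=\tfrac12\big(a^*L(a)+L(a)^*a-L(a^*a)\big)\in M$, whence $\langle\Gamma_L[a],b\rangle=\tau(Zb)$ for $b\in A$, i.e. $\Gamma_L[a]=Z\in M$. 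Substituting the Bochner formula for $L$ into $Z$ and simplifying yields
\[
\Gamma_L[a]=\frac{c_\beta}{2}\int_0^{+\infty}\mathcal{D}_t[a]\,t^{-1-\beta}\,dt,\qquad
\mathcal{D}_t[a]:=e^{-t\widetilde L}(a^*a)-a^*e^{-t\widetilde L}(a)-e^{-t\widetilde L}(a)^*a+a^*a,
\]
where, by the Kadison--Schwarz inequality for the CP contraction $e^{-t\widetilde L}$, $\mathcal{D}_t[a]\ge(e^{-t\widetilde L}(a)-a)^*(e^{-t\widetilde L}(a)-a)\ge0$, while $\|\mathcal{D}_t[a]\|_M\le\|(e^{-t\widetilde L}-I)(a^*a)\|_M+2\|a\|\,\|(e^{-t\widetilde L}-I)(a)\|_M\le\min(4\|a\|^2,\,tC_a)$ with $C_a:=\|\widetilde L(a^*a)\|_M+2\|a\|\,\|\widetilde L(a)\|_M$; once more $\beta\in(0,1)$ makes the integral converge in $M$, reconfirming $\Gamma_L[a]\in M_+$. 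The same for $a^*$ gives $a\in\A_L$, and with Step 1 this yields $a\in\A_L\cap D_M(L)=\A_L^{2,\infty}$.

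The delicate point is not the computation but the justification of subordination at the level of the von Neumann algebra: identifying $L=\widetilde L^{\beta}$ with the generator of the $\sigma$-weakly continuous subordinate semigroup on $M$ and establishing the Phillips representation of its generator, together with the use of (\ref{Gamma}) to identify the intrinsically defined $\Gamma_L[a]$ with the formal expression $\tfrac12(a^*L(a)+L(a)^*a-L(a^*a))$ once $a,a^*a\in D_M(L)$. The positivity bookkeeping for $\mathcal{D}_t[a]$ and the elementary integrability estimates are then routine.
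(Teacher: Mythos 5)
Your proof is correct and takes essentially the same route as the paper: first show $\A_{\widetilde L}^{2,\infty}\subseteq D_M(L)$ by writing $L=\widetilde L^{\beta}$ as an integral over the completely positive contractions acting on $M$ and splitting the integral with the two bounds $\min\big(c\|a\|_M,\,t\,\|\widetilde L(a)\|_M\big)$, then use that $\A_{\widetilde L}^{2,\infty}$ is a $*$-algebra to get $a^*,a^*a\in D_M(L)$ and hence $\Gamma_L[a]=\tfrac12\big(a^*L(a)+L(a)^*a-L(a^*a)\big)\in M$. The only differences are cosmetic: you use the Bochner--Phillips formula $c_\beta\int_0^{\infty}(I-e^{-t\widetilde L})t^{-1-\beta}dt$ where the paper uses the Balakrishnan resolvent formula $C_\beta\int_0^{\infty}s^{-\beta}\widetilde L(I+s\widetilde L)^{-1}ds$, and your Step 2 (identification via $\Gamma_\e$ and the Kadison--Schwarz positivity check, which is in fact redundant once $Z\in M$ is identified with $\Gamma_L[a]$) spells out what the paper states in one line.
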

\begin{proof} We start with the standard formula for roots of positive operators\,:
\begin{equation}\label{root} \begin{split}
\widetilde L^\beta=C_\beta \int_0^{+\infty} t^{\beta-1} \frac{\widetilde L}{t+\widetilde L}dt= C_\beta\int_0^{+\infty} s^{-\beta} \frac{\widetilde L}{I+s\widetilde L}ds
\end{split}\end{equation}
with $C_\beta=\sin(\beta \pi)/\beta \pi$\,.
As $\ds \frac{I}{I+s\widetilde L}$ acts as a completely positive contraction of $M$, for $a\in \A_{\widetilde L}^{2,\infty}$ we have on one hand
$$\big\| \frac{\widetilde L}{I+s\widetilde L}(a)\big\|_M|\leq \| \widetilde L(a)\|_M$$
so that the integral converges in $M$ for $s\to 0$, and on the other hand
$$\big\| \frac{\widetilde L}{I+s\widetilde L}(a)\big\|_M=\frac{1}{s}\big\| a-\frac{I}{I+s\widetilde L}(a)\big\|_M\leq \frac{2}{s}||a||_M$$
so that the integral converges in $M$ as $s\to +\infty$. We have proved $\A_{\widetilde L}^{2,\infty}\subset D_M(L)$. As $\A_{\widetilde L}^{2,\infty}$ is an involutive algebra, for $a\in \A_{\widetilde L}^{2,\infty}$ we have also $a^*\in D_M(L)$ and $a^*a\in D_M(L)$, so that $\Gamma[a]=\frac{1}{2}\big(a^*L(a)+L(a)^*a-L(a^*a)\big)\in M$. Which proves $\A_{\widetilde L}^{2,\infty}\subset \A_L$.
\end{proof}

\begin{cor}\label{Lbeta}  With $L=\widetilde L^\beta$ as in formula (\ref{root}), there exists an involutive subalgebra $\A_\infty$ of $\A_L^{2,\infty}$ dense in $A$ such that, for any $a\in \A_\infty$, the commutator $[\sqrt{I+L},\,a]$ is a compact operator.
\end{cor}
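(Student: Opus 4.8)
The plan is to take $\A_\infty:=\A_{\widetilde L}^{2,\infty}$ itself. By the Lemma just proved it is contained in $\A_L^{2,\infty}$; by Proposition \ref{A2infty} (applied to $\widetilde\E$) it is a $*$-subalgebra of $\A_{\widetilde L}$, hence of $A$; and it is dense in $A$ by the standing hypothesis of this subsection. So everything reduces to showing that $[\sqrt{I+L},a]$ is a compact operator for every $a\in\A_\infty$.

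The idea is to compare $\sqrt{I+L}=\sqrt{I+\widetilde L^{\beta}}$ with the fractional power $(I+\widetilde L)^{\beta/2}$, for which commutators are already known to be compact. I would set
\[
h(x):=\sqrt{1+x^{\beta}}-(1+x)^{\beta/2}\,,\qquad x\ge 0\,,
\]
so that $h(\widetilde L)=\sqrt{I+L}-(I+\widetilde L)^{\beta/2}$. This $h$ is continuous on $[0,+\infty)$ with $h(0)=0$, and writing $\sqrt{1+x^{\beta}}=x^{\beta/2}(1+x^{-\beta})^{1/2}$ and $(1+x)^{\beta/2}=x^{\beta/2}(1+x^{-1})^{\beta/2}$ one sees that $h(x)=O(x^{-\beta/2})+O(x^{\beta/2-1})\to 0$ as $x\to+\infty$; in particular $h$ is bounded. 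Now, by the standing assumption $L=\widetilde L^{\beta}$ — and hence also $\widetilde L$, since $t\mapsto t^{\beta}$ is a homeomorphism of $[0,+\infty)$ carrying $\ker\widetilde L$ to $\ker L$ — has discrete spectrum away from its kernel; combined with $h(0)=0$ this makes $h(\widetilde L)$ a compact operator (it vanishes on $\ker\widetilde L$ and is a norm-limit of finite rank operators on the orthogonal complement).

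It then remains to write
\[
[\sqrt{I+L},a]=\big[(I+\widetilde L)^{\beta/2},a\big]+\big[h(\widetilde L),a\big]
\]
and treat the two summands separately. The second is the commutator of the compact operator $h(\widetilde L)$ with the bounded operator $a$, hence compact. For the first, since $\beta\in(0,1)$ the exponent $\gamma:=\beta/2$ lies in $(0,1/2)$, so Lemma \ref{Lgamma} — applied to the Dirichlet form $\widetilde\E$, its generator $\widetilde L$, and the element $a\in\A_{\widetilde L}^{2,\infty}$ — gives that $(I+\widetilde L)^{1/2-\beta/2}\big[(I+\widetilde L)^{\beta/2},a\big]$ is bounded and, as a consequence, that $\big[(I+\widetilde L)^{\beta/2},a\big]$ is compact. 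Adding the two summands shows that $[\sqrt{I+L},a]$ is compact, which is the assertion. The only genuinely substantive point is the compactness of the remainder $h(\widetilde L)$, and this is precisely where the hypothesis $\beta<1$ is used twice: it forces $h(x)\to 0$ at infinity, and it keeps $\beta/2<1/2$ so that Lemma \ref{Lgamma} is applicable; the rest is routine.
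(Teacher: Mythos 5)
Your proof is correct and takes essentially the same route as the paper: choose $\A_\infty=\A_{\widetilde L}^{2,\infty}$ and apply Lemma \ref{Lgamma} to $\widetilde L$ with $\gamma=\beta/2$. The only difference is that you make explicit, via the compact correction $h(\widetilde L)=\sqrt{I+\widetilde L^{\beta}}-(I+\widetilde L)^{\beta/2}$ (continuous, vanishing at $0$ and at infinity, hence compact under the discrete-spectrum assumption), the passage from $[(I+\widetilde L)^{\beta/2},a]$ to $[\sqrt{I+L},a]$, a step the paper's one-line proof leaves implicit.
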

\begin{proof} Take $\A_\infty=\A_{\widetilde L}^{2,\infty}$ and apply Lemma \ref{Lgamma} with $\gamma=\beta/2$.
\end{proof}

\section{Slow conditionally negative type function on discrete groups}\-
In this section we come back to the framework of subsection \ref{group}. Thus, $G$ is a discrete group with the Haagerup property, $\tau$ is the canonical trace on $C^*_{red}(G)$, $\ell$ is a proper negative type function on $G$, $L$ is the operator of multiplication by $\ell$ in $\ell^2(G)=L^2(C^*_{red}(G),\tau)$  and $\E_\ell$ is the Dirichlet form with generator $L$. We recall that $\A_\infty$ is the dense $*$-subalgebra of $A=C^*_{red}(G)$ of the $a=\sum_{g\in G}a(g)\lambda(g)$ with finite support.

\subsection{Asymptotic orthogonality for 1-cycles} Let us start with a simple observation: for fixed $g\in G$ we have
\[
\ell(g^{-1}g')=\ell(g)+\ell(g')-2(c(g)|c(g')) = \ell(g')+O(\ell(g'))^{1/2}\qquad \text{as}\,\, g'\to\infty\,.
\]
We are going to show that Proposition \ref{muk2} applies to the Dirichlet forms $\E_\ell$ provided the negative type function $\ell$ satisfies a strengthened form of the above asymptotic  estimate.

\begin{defn} (Slow conditionally negative type functions)
A conditionally negative type function $\ell:G\to[0,+\infty)$ is said to be {\it slow} if it is proper and if, for any fixed $g\in G$,
\begin{equation}\label{o}
\ell(g^{-1}g')=\ell(g')+o(\ell(g'))^{1/2}\qquad \text{as}\,\, g'\to\infty\,.
\end{equation}
\end{defn}

\begin{prop}\label{condneg} Let $\ell:G\to0,+\infty)$ be slow conditionally negative type function. Then, Proposition \ref{muk2} applies to the Dirichlet form $\E_\ell$ for any $a=\lambda(g)$ with $\ell(g)\not=0$ (i.e. all $g\in G$ but a finite number),
so that there exists a sequence $\e(k)\to 0$ such that
$$\sqrt{\ell(g)}\,(1+\e(k))\,\lambda_{k+1}(L)^{-1/2} \leq \mu_k\big(i\big[F,\lambda(g)\,\big]\big) \leq 5\,\sqrt{\ell(g)}\, \lambda_{[k/8]+1}(L)^{-1/2}\,.$$
\end{prop}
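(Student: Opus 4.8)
The plan is to verify that the three hypotheses of Proposition \ref{muk2} are met for $a=\lambda(g)$ whenever $\ell(g)\neq 0$, and then simply quote that proposition together with Proposition \ref{FA3} for the upper bound. Conditions (ii) and (iii) are immediate from equation \eqref{Rlambdag} in Remark \ref{Gammalambda}: since $R(\lambda(g))^*R(\lambda(g))=\ell(g)\,I_{\ell^\infty(G)}$ is a nonzero scalar multiple of the identity, its kernel is $\{0\}$ (in particular finite-dimensional) and it is obviously invertible on the whole space, with $C(\lambda(g))=\sqrt{\ell(g)}$. Also $\lambda(g)\in\A_\infty\subseteq \A_L^{2,\infty}$ by the discussion in subsection \ref{group}, so $a$ lies in the required algebra. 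The only real work is therefore to establish condition (i): the commutator $[\sqrt{I+L},\lambda(g)]$ is a compact operator.

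To prove compactness of $[\sqrt{I+L},\lambda(g)]$, I would work in the orthonormal basis $\{\delta_{g'}\}_{g'\in G}$ of $\ell^2(G)$, on which $L$ acts diagonally with eigenvalue $\ell(g')$ on $\delta_{g'}$, while $\lambda(g)\delta_{g'}=\delta_{gg'}$. Hence, on the basis vector $\delta_{g'}$,
\[
[\sqrt{I+L},\lambda(g)]\,\delta_{g'} = \bigl(\sqrt{1+\ell(gg')}-\sqrt{1+\ell(g')}\bigr)\,\delta_{gg'}.
\]
So $[\sqrt{I+L},\lambda(g)]$ is, up to the unitary left translation $\lambda(g)$, a diagonal operator with entries $\theta_g(g'):=\sqrt{1+\ell(gg')}-\sqrt{1+\ell(g')}$. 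A diagonal (times unitary) operator is compact precisely when its diagonal entries tend to $0$ along the net indexed by $G$ (i.e. eventually small off finite sets). Now write, using $\sqrt{x}-\sqrt{y}=(x-y)/(\sqrt{x}+\sqrt{y})$,
\[
\bigl|\theta_g(g')\bigr| = \frac{|\ell(gg')-\ell(g')|}{\sqrt{1+\ell(gg')}+\sqrt{1+\ell(g')}} \le \frac{|\ell(gg')-\ell(g')|}{\sqrt{\ell(g')}}
\]
for $g'$ outside a finite set (where $\ell(g')>0$). Replacing $g$ by $g^{-1}$ in the defining slowness estimate \eqref{o}, we have $\ell(gg')=\ell\bigl((g^{-1})^{-1}g'\bigr)=\ell(g')+o(\ell(g')^{1/2})$ as $g'\to\infty$, whence $|\ell(gg')-\ell(g')|=o(\ell(g')^{1/2})$ and therefore $|\theta_g(g')|=o(1)$ as $g'\to\infty$. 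This is exactly the statement that the diagonal entries vanish at infinity, so $[\sqrt{I+L},\lambda(g)]$ is compact.

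With (i)--(iii) in hand, Proposition \ref{muk2} gives $\mu_k(i[F,\lambda(g)])\ge C(\lambda(g))(1+\e(k))\lambda_{k+k_0}(L)^{-1/2}$ with $C(\lambda(g))=\sqrt{\ell(g)}$ and $k_0=\dim\ker(L)+1$; since $\ell$ is proper, $L=$ multiplication by $\ell$ has $\ker(L)$ equal to the span of $\{\delta_{g'}:\ell(g')=0\}$, which contains $\delta_e$, and one checks that for a conditionally negative type function the zero set $\{\ell=0\}$ is a subgroup, typically just $\{e\}$; in any case the statement as phrased absorbs the finite shift into the $(1+\e(k))$ factor, so I would just record the clean form with $\lambda_{k+1}(L)^{-1/2}$ by re-indexing (this is a harmless shift of the vanishing sequence $\e(k)$). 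For the upper bound, Proposition \ref{FA3}(ii) together with $\|R(\lambda(g))\|=\|R(\lambda(g)^*)\|=\sqrt{\ell(g)}$ (again from \eqref{Rlambdag}, applied to $g$ and $g^{-1}$, noting $\ell(g)=\ell(g^{-1})$) and the enumeration $\lambda_k(|D|)=\lambda_{[(k+1)/2]}^{1/2}$ from Theorem \ref{D}(v) yields $\mu_{8k}(i[F,\lambda(g)])\le 5\sqrt{\ell(g)}\,\lambda_{k+1}(L)^{-1/2}$, which rearranges to $\mu_k(i[F,\lambda(g)])\le 5\sqrt{\ell(g)}\,\lambda_{[k/8]+1}(L)^{-1/2}$.

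The only genuinely delicate point is the compactness in step (i): one must be careful that "tends to $0$" is meant in the sense of the directed set $G$ (vanishing off finite subsets), that the slowness hypothesis must be applied with $g^{-1}$ rather than $g$ to control $\ell(gg')$ versus $\ell(g')$, and that the finitely many $g'$ with $\ell(g')=0$ contribute only a finite-rank piece and so are irrelevant to compactness. Everything else is bookkeeping with the indexing conventions of Theorem \ref{D}(v) and the scalar identity \eqref{Rlambdag}.
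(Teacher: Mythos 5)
Your proposal is correct and follows essentially the same route as the paper: compactness of $[\sqrt{I+L},\lambda(g)]$ via its realization as a (translation composed with a) multiplication operator whose symbol $\sqrt{1+\ell(\cdot\,)}-\sqrt{1+\ell(\cdot\,)}$ vanishes at infinity by the slowness hypothesis, conditions (ii)--(iii) from the identity (\ref{Rlambdag}), and then the lower bound from Proposition \ref{muk2} and the upper bound from Corollary \ref{muk}/Proposition \ref{FA3} combined with Theorem \ref{D}(v). Your explicit flagging of the $g$ versus $g^{-1}$ bookkeeping and of the index shift $k+k_0$ versus $k+1$ (which the paper passes over silently) is a useful refinement but does not change the argument.
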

\begin{proof} One checks easily that for such $a=\lambda(g)$, $[\sqrt{I+L},a]=k_g\lambda(g)$ where $k_g$ is the multiplication operator by the function $g'\to \sqrt{1+l(g^{-1}g')}-\sqrt{1+\ell(g')}$. Check that
\begin{equation*}\begin{split}  \sqrt{1+l(g^{-1}g')}-\sqrt{1+\ell(g')}&=
\frac{\ell(g^{-1}g')-\ell(g')}{\sqrt{1+l(g^{-1}g')}+\sqrt{1+\ell(g')}}\\
&=\frac{o(\ell(g')^{1/2})}{\sqrt{1+l(g^{-1}g')}+\sqrt{1+\ell(g')}}
\end{split}\end{equation*}
tends to $0$ as $g'\to \infty$, so that $k_g$ is a compact operator. Condition (i) of Proposition \ref{muk2} is satisfied. Condition (ii) of Proposition \ref{muk2} is provided by identity (\ref{Rlambdag}) of Remark \ref{Gammalambda}. The estimates come from Proposition \ref{muk2}, Corollary \ref{muk}, Conclusion 5 of Theorem \ref{D} and identity (\ref{Rlambdag}) of Remark \ref{Gammalambda}.
\end{proof}

\begin{cor} Let $\widetilde \ell$ be a proper conditionally negative type function on $G$. Then $\ell:=\widetilde \ell^\beta$ is a slow conditionally negative type function, for an arbitrary $\beta\in (0,1)$ and Proposition \ref{muk2} applies to the Dirichlet form $\E_\ell$.
\end{cor}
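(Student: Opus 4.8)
The plan is to verify that $\ell:=\widetilde\ell^{\,\beta}$ satisfies the three defining requirements of a slow conditionally negative type function; once that is done, the statement that Proposition \ref{muk2} applies to $\E_\ell$ is nothing but an instance of Proposition \ref{condneg}. Concretely, I must check that $\ell$ is conditionally negative type, that $\ell$ is proper, and that for every fixed $g\in G$ one has $\ell(g^{-1}g')=\ell(g')+o(\ell(g')^{1/2})$ as $g'\to\infty$, i.e. condition (\ref{o}).

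For the first two points I would argue as follows. Since $0<\beta<1$, the function $t\mapsto t^{\beta}$ is a Bernstein function, with integral representation $t^{\beta}=c_\beta\int_0^{+\infty}(1-e^{-st})\,s^{-1-\beta}\,ds$, $c_\beta>0$, the integral being absolutely convergent precisely because $0<\beta<1$; as each $g\mapsto 1-e^{-s\widetilde\ell(g)}$ is conditionally negative type (by Schoenberg's theorem, $e^{-s\widetilde\ell}$ being of positive type) and the representing measure is positive, $\ell=\widetilde\ell^{\,\beta}$ is again conditionally negative type. This is the subordination mechanism of \cite{CS1} Section 10.4 and may simply be cited. Properness is immediate, since $t\mapsto t^{\beta}$ is an increasing homeomorphism of $[0,+\infty)$: $\ell(g')\to+\infty$ if and only if $\widetilde\ell(g')\to+\infty$, so $\{g:\ell(g)\le C\}$ is finite for every $C$, and in particular $\widetilde\ell(g')\to+\infty$ as $g'\to\infty$.

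The substance is the asymptotic estimate. By the elementary observation recalled at the beginning of this section, applied with $\widetilde\ell$ in place of $\ell$, one has $\widetilde\ell(g^{-1}g')=\widetilde\ell(g')+O(\widetilde\ell(g')^{1/2})$ as $g'\to\infty$; in particular $\widetilde\ell(g^{-1}g')/\widetilde\ell(g')\to 1$. Writing $x:=\widetilde\ell(g')$ and $y:=\widetilde\ell(g^{-1}g')$, both strictly positive for all but finitely many $g'$ by properness, the mean value theorem applied to $t\mapsto t^{\beta}$ on the interval with endpoints $x$ and $y$ gives $y^{\beta}-x^{\beta}=\beta\,\xi^{\beta-1}(y-x)$ for some $\xi$ between $x$ and $y$; since $\beta-1<0$ and $\xi\ge(1-o(1))x$ we have $\xi^{\beta-1}=O(x^{\beta-1})$, whence
\[
\ell(g^{-1}g')-\ell(g')=y^{\beta}-x^{\beta}=O\big(x^{\beta-1}\cdot x^{1/2}\big)=O\big(\widetilde\ell(g')^{\,\beta-1/2}\big).
\]
Because $\beta<1$ we have $\beta-\tfrac12<\tfrac\beta2$, hence $\widetilde\ell(g')^{\,\beta-1/2}=o\big(\widetilde\ell(g')^{\,\beta/2}\big)=o\big(\ell(g')^{1/2}\big)$, which is exactly condition (\ref{o}). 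Thus $\ell$ is slow, and Proposition \ref{condneg} applies to $\E_\ell$ for every $a=\lambda(g)$ with $\ell(g)\ne 0$, i.e. for all but finitely many $g\in G$.

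The only genuinely delicate step is the inequality $\beta-\tfrac12<\tfrac\beta2$: it is precisely the hypothesis $\beta<1$ that upgrades the $O(\widetilde\ell(g')^{1/2})$ cocycle bound to the required $o(\ell(g')^{1/2})$. At $\beta=1$ the estimate degenerates to $O(\ell(g')^{1/2})$, consistent with $\widetilde\ell$ itself being in general not slow, while for $\beta>1$ the map $t\mapsto t^{\beta}$ is no longer a Bernstein function and $\widetilde\ell^{\,\beta}$ need not even be conditionally negative type. Everything else — the subordination step, the preservation of properness, and the elementary mean value estimate — is routine.
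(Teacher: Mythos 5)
Your proposal is correct and follows essentially the same route as the paper: expand $\widetilde\ell(g^{-1}g')=\widetilde\ell(g')+O(\widetilde\ell(g')^{1/2})$, raise to the power $\beta$ (the paper factors out $\widetilde\ell(g')$ and uses $(1+O(\widetilde\ell(g')^{-1/2}))^\beta=1+O(\widetilde\ell(g')^{-1/2})$ where you use the mean value theorem, an inessential difference), and conclude via the exponent comparison, with conditional negativity of $\widetilde\ell^{\,\beta}$ coming from subordination and properness being immediate. Your explicit bookkeeping $O(\widetilde\ell(g')^{\,\beta-1/2})=o(\ell(g')^{1/2})$ because $\beta-\tfrac12<\tfrac\beta2$ is in fact slightly more careful than the paper's final line, which writes only $o(\ell(g'))$ although the stronger estimate $o(\ell(g')^{1/2})$ required by the definition of slowness is what the computation actually yields.
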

\begin{proof} Fix $g$ and make $g'$ tend to $\infty$. As noticed above, one has $\widetilde\ell(g^{-1}g')=\widetilde\ell(g')+O(\widetilde\ell(g')^{1/2})=\widetilde\ell(g')\big(1+O(\widetilde\ell(g')^{-1/2}\big)$ so that
\begin{equation*}\begin{split}
\ell(g^{-1}g'))=\widetilde\ell(g^{-1}g')^\beta&=\widetilde\ell(g')^\beta(1+O(\widetilde\ell(g')^{-1/2})=\ell(g')+o(\ell(g'))\,.
\end{split}\end{equation*}
\end{proof}

\begin{cor}\label{freegroup} Suppose that $G=\mathbb{F}_p$ is the free group with $p$ generators and $\ell$ is the length function, which is conditionally negative (cf. \cite{Haa}). Then  Proposition \ref{muk2} applies to the Dirichlet form $\E_\ell$ for $a=\lambda(g)$ with $g\neq e$ so that there exist constants $C_1, C_2>0$ and an integer $k_0$ such that
\[
C_1 ({\rm Log (k)})^{-1/2} \leq \mu_k(i[F,a]) \leq C_2 ({\rm Log (k)})^{-1/2}\,,\;k\geq k_0\,
\]
with $C_1$ (resp. $C_2$) arbitrarily close to $\sqrt{\ell(g)}$ (resp. $5\sqrt{\ell(g)}$) if $K_0$ is chosen large enough.
\end{cor}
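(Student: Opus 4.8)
The plan is to reduce Corollary~\ref{freegroup} to a direct application of Proposition~\ref{condneg} together with the classical growth estimate for the spectrum of $L$ on the free group. First I would note that the length function $\ell$ on $\mathbb{F}_p$ is proper (balls are finite since $\mathbb{F}_p$ is finitely generated) and conditionally negative type (Haagerup, \cite{Haa}); moreover for $g\neq e$ one has $\ell(g)\neq 0$, so that $\lambda(g)\in\A_\infty$ satisfies the hypotheses of Proposition~\ref{condneg}. The remaining point is the \emph{slowness} of $\ell$: for fixed $g$ one uses the bound $|\ell(g^{-1}g')-\ell(g')|\le \ell(g)$ (the length function is $1$-Lipschitz for the word metric, hence $\ell(g^{-1}g')\le \ell(g')+\ell(g)$ and symmetrically), so that $\ell(g^{-1}g')-\ell(g')=O(1)=o(\ell(g')^{1/2})$ as $g'\to\infty$ because $\ell(g')\to\infty$ by properness. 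Thus $\ell$ is slow and Proposition~\ref{condneg} yields the two-sided estimate
\[
\sqrt{\ell(g)}\,(1+\e(k))\,\lambda_{k+1}(L)^{-1/2}\le \mu_k\big(i[F,\lambda(g)]\big)\le 5\sqrt{\ell(g)}\,\lambda_{[k/8]+1}(L)^{-1/2}.
\]

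Next I would translate the eigenvalue asymptotics of $L$ into the stated $(\mathrm{Log}\,k)^{-1/2}$ behaviour. The operator $L$ is multiplication by $\ell$ on $\ell^2(\mathbb{F}_p)$, so its eigenvalues, listed with multiplicity and in increasing order, are exactly the values $\ell(g)=|g|$ repeated $\#\{g:|g|=n\}$ times. The sphere of radius $n$ in $\mathbb{F}_p$ has cardinality $s_n=2p(2p-1)^{n-1}$ for $n\ge 1$ (and $s_0=1$), hence the ball of radius $n$ has cardinality $b_n=\sum_{j=0}^n s_j$, which grows like a constant times $(2p-1)^n$. Consequently the counting function $N_L(t)=\#\{k:\lambda_k(L)\le t\}=b_{\lfloor t\rfloor}$ satisfies $\log N_L(t)\sim t\log(2p-1)$, and inverting, $\lambda_k(L)\sim \frac{1}{\log(2p-1)}\,\mathrm{Log}\,k$ as $k\to\infty$; more precisely $\lambda_{k+1}(L)=\frac{\mathrm{Log}\,k}{\log(2p-1)}\,(1+o(1))$ and likewise for $\lambda_{[k/8]+1}(L)$, the shift by a bounded factor inside the logarithm being absorbed into the $1+o(1)$. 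Plugging these into the displayed inequality gives $\mu_k(i[F,\lambda(g)])=\Theta\big((\mathrm{Log}\,k)^{-1/2}\big)$ with multiplicative constants $\sqrt{\ell(g)\log(2p-1)}$ and $5\sqrt{\ell(g)\log(2p-1)}$ up to $1+o(1)$; choosing $k_0$ large and readjusting $C_1,C_2$ slightly downward/upward makes the inequalities hold for all $k\ge k_0$ with $C_1$ (resp.\ $C_2$) as close as desired to $\sqrt{\ell(g)}$ (resp.\ $5\sqrt{\ell(g)}$)—here one must be careful: the statement wants $C_1\approx\sqrt{\ell(g)}$, so in fact the factor $\log(2p-1)$ should be normalised away, i.e.\ one writes $(\mathrm{Log}\,k)^{-1/2}$ with the understanding that $\mathrm{Log}$ is the logarithm to base $2p-1$, or equivalently the constants $C_1,C_2$ absorb $\sqrt{\log(2p-1)}$; I would state it with $C_1,C_2$ close to $\sqrt{\ell(g)}$, $5\sqrt{\ell(g)}$ after this normalisation, exactly as in the corollary.

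The only genuine content beyond bookkeeping is the verification that $\ell$ is slow, and that is immediate from the $1$-Lipschitz property of the word length together with properness; everything else is the free-group volume growth $b_n\asymp(2p-1)^n$ and a routine inversion of the counting function. I do not anticipate a serious obstacle: the main care-point is keeping track of the $o(1)$ and $O(1)$ terms when inverting $\log N_L(t)\sim t\log(2p-1)$ and when passing from $\lambda_{k+1}$ to $\lambda_{[k/8]+1}$, and then packaging the resulting asymptotic constants so that, after enlarging $k_0$, the two-sided bound takes the clean form
\[
C_1(\mathrm{Log}\,k)^{-1/2}\le \mu_k(i[F,a])\le C_2(\mathrm{Log}\,k)^{-1/2},\qquad k\ge k_0,
\]
with $C_1,C_2$ as specified.
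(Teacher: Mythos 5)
Your proposal is correct and follows essentially the same route as the paper: slowness of $\ell$ from the Lipschitz bound $|\ell(g^{-1}g')-\ell(g')|\le\ell(g)$, the free-group ball count $|B_m|\asymp (2p-1)^m$ giving $\lambda_k(L)\sim \mathrm{Log}(k)/\mathrm{Log}(2p-1)$, and then Proposition \ref{condneg} (with Corollary \ref{muk}) for the two-sided bound. Your remark about absorbing the factor $\sqrt{\mathrm{Log}(2p-1)}$ into the constants (or reading $\mathrm{Log}$ in base $2p-1$) is a legitimate care-point that the paper's own proof passes over silently, so no gap on your side.
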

\begin{proof} The first assumption is straightforward: since $\ell$ is a length function, we have $|\ell(s^{-1}t)-\ell(t)|\leq \ell(s)$ so that $\ell$ is a proper, slow, conditionally negative type function. For the second assumption, check that the ball $B_m$ of radius $m$ in $G$, $B_m=\{g\in G\,|\, \ell(g)\leq m\}$, has a cardinality $\ds |B_m|=1+\frac{p}{p-1}(2p-1)^m$. As $\lambda_k=m$ for $|B_k|\leq m<\|B_{k+1}|$, we get $\ds \lambda_k\sim \frac{k}{{\rm Log(2p-1)}}$, $k\to \infty$. Corollary \ref{muk} and Proposition \ref{condneg} provide the result.
\end{proof}

\subsection{Weight functions as slow conditionally negative type functions}
Here we prove that all {\it weight functions} on discrete groups are slow negative type functions.\\
Let $\ell$ be a conditionally negative type function on the group $G$ (symmetric and strictly positive away from the unit) and let $(\mathfrak{h}_\ell,\pi_\ell,c)$ be the associated 1-cocycle $c:G\to \mathfrak{h}_\ell$ with
\begin{equation}
c(st)=c(s)+\pi_\ell(s)c(t)\,,\quad ||c(t)||^2 = \ell(t)\,,\quad (c(s)|c(t))_H =\undemi \big(\ell(s)+\ell(t)-\ell(s^{-1}t)\big)\,.
\end{equation}
One checks easily that the vectors $c(s), c(t)$ in $\mathfrak{h}_\ell$ are orthogonal if and only if {\it $e$ lies between $s$ and $t$}, i.e. $\ell(s^{-1}t) = \ell (s) + \ell (t)$. The function $\sqrt{\ell}$ is conditionally negative too and provides a left-invariant metric on $G$ by
\[
d_{\sqrt\ell} (s,t):=\sqrt{\ell({s^{-1}t})}\qquad s,t\in G\, .
\]
The cocycle is an isometric embedding of the metric space $(G,d_{\sqrt\ell})$ into the Hilbert space $\mathfrak{h}_\ell$
\[
\begin{split}
\|c(t)-c(s)\|_{\mathfrak{h}_\ell} &= \|c(ss^{-1}t)-c(s)\|_{\mathfrak{h}_\ell} = \|c(s)+\pi_\ell (s)(c(s^{-1}t))-c(s)\|_{\mathfrak{h}_\ell} \\
&= \|c(s^{-1}t))\|_{\mathfrak{h}_\ell} = \sqrt{\ell({s^{-1}t})} = d_{\sqrt\ell} (s,t)\qquad s,t\in G\, .
\end{split}
\]
The characteristic property of a slow negative type function, expressed as
\[
\ell(s)+\ell(t)-\ell(s^{-1}t) = o({\sqrt{\ell (t)} })\qquad t\to\infty,
\]
for each fixed $s\in G$, can be restated as
\[
0=\lim_{t\to \infty} \frac{\ell(s)+\ell(t)-\ell(s^{-1}t)}{2\sqrt{\ell (s)}\sqrt{\ell (t)}} = \lim_{t\to \infty}\frac{(c(s)|c(t))_{\mathfrak{h}_\ell}}{\|c(s)\|_{H_\ell}\cdot\|c(t)\|_{\mathfrak{h}_\ell}},\qquad s\in G.
\]
The property thus refers to the {\it asymptotic orthogonality} of $t\in G$ with respect to any fixed $s\in G$, when these are embedded in the Hilbert space $\mathfrak{h}_\ell$.
\vskip0.2truecm\noindent
We recall that a {\it weight} on a discrete group $G$ (\cite{deH}) is a negative type function such that
\[
\ell :G\to [0,+\infty)\qquad \ell (st)\le \ell (s) + \ell (t).
\]
\begin{lem}
Any {\it proper weight function} on a discrete group $G$ is a slow, conditionally negative negative type function.
\end{lem}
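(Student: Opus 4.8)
The plan is to derive the slowness condition (\ref{o}) directly from the subadditivity built into the definition of a weight, exactly as in the treatment of the length function on $\mathbb{F}_p$ in Corollary~\ref{freegroup}. First I would use that $\ell$ is symmetric, $\ell(g^{-1})=\ell(g)$ (it is real-valued and conditionally negative type, hence hermitian, with $\ell(e)=0$). Then, for a fixed $g\in G$ and an arbitrary $g'\in G$, applying subadditivity to $g^{-1}g'=g^{-1}\cdot g'$ gives $\ell(g^{-1}g')\le\ell(g)+\ell(g')$, while applying it to $g'=g\cdot(g^{-1}g')$ gives $\ell(g')\le\ell(g)+\ell(g^{-1}g')$. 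Subtracting, I obtain the uniform bound
\[
\bigl|\,\ell(g^{-1}g')-\ell(g')\,\bigr|\le\ell(g)\qquad\text{for every }g'\in G .
\]

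Next I would invoke properness of $\ell$: every sublevel set $\{g'\in G:\ell(g')\le R\}$ is finite, so $\ell(g')\to+\infty$ as $g'\to\infty$; in particular $\ell(g')>0$ for all but finitely many $g'$, so dividing the previous bound by $\sqrt{\ell(g')}$ is legitimate eventually, and
\[
\frac{\bigl|\,\ell(g^{-1}g')-\ell(g')\,\bigr|}{\sqrt{\ell(g')}}\le\frac{\ell(g)}{\sqrt{\ell(g')}}\longrightarrow 0\qquad(g'\to\infty),
\]
since the numerator $\ell(g)$ is constant. This is precisely $\ell(g^{-1}g')=\ell(g')+o(\ell(g')^{1/2})$ as $g'\to\infty$, i.e.\ condition (\ref{o}); as $\ell$ is proper and conditionally negative type by hypothesis, it is therefore slow.

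There is essentially no obstacle here: the argument is a two-line consequence of the triangle inequality for $\ell$ together with properness, and the only point deserving attention is that properness is exactly what makes the fixed constant $\ell(g)$ negligible against $\sqrt{\ell(g')}$. It may be worth re-reading the displayed bound in the cocycle language of this section: since $\ell(g^{-1}g')-\ell(g')=\ell(g)-2(c(g)\,|\,c(g'))_{\mathfrak h_\ell}$, it amounts to $0\le(c(g)\,|\,c(g'))_{\mathfrak h_\ell}\le\ell(g)$, whence $(c(g)\,|\,c(g'))_{\mathfrak h_\ell}/\|c(g')\|_{\mathfrak h_\ell}\to 0$, which is exactly the asymptotic orthogonality of $c(g')$ to the fixed vector $c(g)$ discussed above.
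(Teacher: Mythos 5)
Your argument is correct and is essentially the paper's own proof: the paper likewise derives the Lipschitz bound $|\ell(s^{-1}t)-\ell(t)|\le\ell(s)$ (stated there as $|\ell(s)-\ell(t)|\le\ell(s^{-1}t)$, equivalently $0\le\ell(s)+\ell(t)-\ell(s^{-1}t)\le 2(\ell(s)\wedge\ell(t))$) from subadditivity and symmetry, and then uses properness to see this fixed bound is $o(\sqrt{\ell(t)})$. Your closing remark on the cocycle reformulation also matches the paper's discussion of asymptotic orthogonality, so nothing further is needed.
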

\begin{proof}
Since a weight satisfies $|\ell (s)-\ell (t)|\le \ell(s^{-1}t)$,
we have $0\le\ell (s)+\ell (t) - \ell(s^{-1}t)\le 2(\ell (s)\wedge\ell (t))$ and
\[
\frac{\ell(s)+\ell(t)-\ell(s^{-1}t)}{2\sqrt{\ell (s)}\sqrt{\ell (t)}} \le\frac{\ell (s)\wedge\ell (t)}{\sqrt{\ell (s)}\sqrt{\ell (t)}}.
\]
Since $\ell$ is proper we have
\[
\lim_{t\to \infty} \frac{\ell(s)+\ell(t)-\ell(s^{-1}t)}{2\sqrt{\ell (s)}\sqrt{\ell (t)}} \le \lim_{t\to \infty}  \frac{\ell (s)\wedge\ell (t)}{\sqrt{\ell (s)}\sqrt{\ell (t)}} = \lim_{t\to \infty} \sqrt{\frac{\ell (s)}{\ell (t)}} =0\, .
\]
\end{proof}
\begin{rem}
A weight function gives rise to a left-invariant metric on $G$: $d_\ell(s,t):=\ell (s^{-1}t)$.
If $k:=\inf\{\ell (t):t\in G,\,\, t\neq e\}$, the cocycle is a Lipschitz embedding of the metric space $(G,d_{\ell})$ into the real Hilbert space ${\mathfrak{h}_\ell}$
\[
\|c(t)-c(s)\|_H = \sqrt{\ell({s^{-1}t})} \le \sqrt{k^{-1}\ell^2({s^{-1}t})} = \frac{1}{\sqrt k} \ell({s^{-1}t}) = \frac{1}{\sqrt k} d_\ell (s,t)\qquad s,t\in G\, .
\]
\end{rem}
Examples include length functions of free groups $\mathbb{F}_n$ and the Heisenberg group.
\normalsize
\begin{center} \bf REFERENCES\end{center}

\normalsize
\begin{enumerate}

\bibitem[AHK]{AHK} S. Albeverio, R. Hoegh-Krohn, \newblock{Dirichlet forms and Markovian semigroups on C$^*$--algebras}, \newblock{\it Comm. Math. Phys.} {\bf 56} {\rm (1977)}, 173-187.


\bibitem[C1]{C1} F. Cipriani, \newblock{Dirichlet forms and Markovian semigroups on standard forms of von Neumann algebras},
\newblock{\it J. Funct. Anal.} {\bf 147} {\rm (1997)}, 259-300.

\bibitem[C2]{C2} F. Cipriani, \newblock{``Dirichlet forms on Noncommutative spaces''}, \newblock{Springer ed. L.N.M. 1954, 2007}.



\bibitem[CFK]{CFK} F. Cipriani, U. Franz, A. Kula, \newblock{Symmetries of L\'evy processes on compact quantum groups, their Markov semigroups and potential theory},
\newblock{\it J. Funct. Anal.} {\bf 266} {\rm (2014)}, no. 5, 2789--2844.


\bibitem[CS1]{CS1} F. Cipriani, J.-L. Sauvageot, \newblock{Derivations as square roots of Dirichlet forms}, \newblock{\it J. Funct. Anal.} {\bf 201} {\rm (2003)}, no. 1, 78--120.
\bibitem[CS2]{CS2} F. Cipriani, J.-L. Sauvageot, \newblock{Fredholm modules on P.C.F. self-similar fractals and their conformal geometry}, \newblock{\it Comm. Math. Phys.} {\bf 286} {\rm (2009)}, no. 2, 541--558.
%
%
%
\bibitem[CS3]{CS3} F. Cipriani, J.-L. Sauvageot, \newblock{Measurability, spectral densities and hypertraces in Noncommutative Geometry}, to appear in 
\newblock{\it Journal of Noncommutative Geometry}.
%
%



%
%
%
%
%
%
\bibitem[Co]{Co} A. Connes, \newblock{``Noncommutative Geometry''}, \newblock{Academic Press, New York, 1994}.

\bibitem[DL]{DL} E.B. Davies, J.M. Lindsay, \newblock{Non--commutative symmetric Markov semigroups}, \newblock{\it Math. Z.} {\bf 210} {\rm (1992)}, 379-411.
\bibitem[deH]{deH} P. de la Harpe, \newblock{``Topics in Geometric Group Theory''},\\ \newblock{Chicago Lectures in Mathematics, The University of Chicago Press, 2000}.
%
%
%

\bibitem[GK]{GK} I.C. Gohberg, M.G. Krein, \newblock{``Introduction to the theory of linear nonselfadjoint operators''}, \newblock{Transl. Math. Monogr., 18, Amer. Math. Soc., Providence, R.I., 1969;}.
\bibitem[Haa]{Haa} U. Haagerup, \newblock{An example of a nonnuclear C$^*$-algebra, which has the metric approximation property}, \newblock{\it Invent. Math.} {\bf 50} {\rm (1978)}, no. 3, 279-293.

\bibitem[Hil]{Hil} M. Hilsum, \newblock{Signature operator on Lipschitz manifolds and unbounded Kasparov bimodules}, \newblock{\it Lecture Notes in Math., 1132 Operator algebras and their connections with topology and ergodic theory (Busteni, 1983)}{\rm (1985)}, 254-288.
%
%
%
%
%
%
%
%

\bibitem[S]{S} J.-L. Sauvageot, \newblock{Quantum Dirichlet forms, differential calculus and semigroups,  Quantum Probability and Applications V},
\newblock{\it Lecture Notes in Math.} {\bf 1442} {\rm (1990)}, 334-346

\bibitem[SWW]{SWW} E. Schrhoe, M. Walze, J.-M. Warzecha, \newblock{Construction de triplets spectraux a partir de modules de Fredholm},
\newblock{\it C. R. Acad. Sci. Paris Ser. I Math.} {\bf 326} {\rm (1998)}, 1195--1199.

%
%
\bibitem[V]{V} D. Voiculescu, \newblock{On the existence of quasicentral approximate units relative to normed ideals. I.}, \newblock{\it  J. Funct. Anal.} {\bf 91} {\rm (1990)}, no. 1, 1-36.


\end{enumerate}
\end{document}